\newtheorem{thm}{Theorem}
\newtheorem*{thm*}{Theorem}
\newtheorem{prop}{Proposition}[section]
\newtheorem{lem}[prop]{Lemma}
\newtheorem*{lem*}{Lemma}
\newtheorem{defn}[prop]{Definition}
\theoremstyle{remark}
\newtheorem{rema}[prop]{Remark}
\def\mbb{\mathbb}
\def\mb{\mathbf}
\def\mr{\mathrm}
\def\mc{\mathcal}
\newlength{\equwidth}
\newcommand{\crd}{\mbox{$                                     
\begin{picture}(9,8)(1.6,0.15)
\put(1,0.2){\mbox{$ D \hspace{-7.8pt} /$}}
\end{picture}$}}
\def\dirac{\crd}
\DeclareMathOperator{\Ric}{Ric}
\def\al{\alpha}
\def\ga{\gamma}
\def\de{\delta}
\def\la{\lambda}
\def\Rho{\mbox{\textsf{P}}}
\def\si{\sigma}
\def\Ups{\Upsilon}
\def\om{\omega}
\def\Ga{\Gamma}
\def\Th{\Theta}
\def\La{\Lambda}
\def\embed{\hookrightarrow}
\def\cinf{\ensuremath{\mathrm{C}^\infty}}
\def\im{\mathrm{im}\ }
\def\dim{\mathrm{dim\ }}
\def\G{\mathcal{G}}
\DeclareMathOperator{\Spin}{Spin}
\def\wt{\widetilde}
\def\rr{\ensuremath{\mathbb{R}}}
\def\SL{\ensuremath{\mathrm{SL}}}
\def\SO{\ensuremath{\mathrm{SO}}}
\def\sl{\ensuremath{\mathfrak{sl}}}
\def\so{\ensuremath{\mathfrak{so}}}
\def\ce{\ensuremath{\mathcal{E}}}
\def\sideremark#1{\ifvmode\leavevmode\fi\vadjust{\vbox to0pt{\vss
 \hbox to 0pt{\hskip\hsize\hskip1em
 \vbox{\hsize3cm\tiny\raggedright\pretolerance10000
 \noindent #1\hfill}\hss}\vbox to8pt{\vfil}\vss}}}%
\def\today{\ifcase\month\or
 January\or February\or March\or April\or May\or June\or
 July\or August\or September\or October\or November\or December\fi
 \space\number\day, \number\year}
\def\mb{\mathbf} 
\def\mr{\mathrm}
\def\mc{\mathcal}
\def\ee{\ensuremath{\mathrm{e}}}
\def\d{\operatorname{d}\!}
\def\Vertical{V}
\newcommand{\ind}{\indices}
\newcommand{\parderv}[2] {\frac{\partial#1}{\partial#2}}
\newcommand*\patchAmsMathEnvironmentForLineno[1]{%
  \expandafter\let\csname old#1\expandafter\endcsname\csname #1\endcsname
  \expandafter\let\csname oldend#1\expandafter\endcsname\csname end#1\endcsname
  \renewenvironment{#1}%
     {\linenomath\csname old#1\endcsname}%
     {\csname oldend#1\endcsname\endlinenomath}}%
\newcommand*\patchBothAmsMathEnvironmentsForLineno[1]{%
  \patchAmsMathEnvironmentForLineno{#1}%
  \patchAmsMathEnvironmentForLineno{#1*}}%
\title{Conformal Patterson--Walker metrics}
\author[Hammerl, Sagerschnig, \v{S}ilhan, Taghavi-Chabert, \v{Z}\'adn\'ik]{Matthias Hammerl, Katja Sagerschnig,
 Josef \v{S}ilhan,
 Arman Taghavi-Chabert and Vojt\v{e}ch \v{Z}\'adn\'ik}
\address{\flushleft M. H.: University of Vienna, Faculty of Mathematics, Oskar-Morgenstern-Platz 1, 1090 Wien, Austria
\newline K. S.: Center for Theoretical Physics PAS, Al. Lotnik\'ow 32/46 02-668 Warszawa, Poland
\newline J. \v S.:
Masaryk University, Faculty of Science, Kotl\'{a}\v{r}sk\'{a} 2, 61137 Brno, Czech Republic
\newline 
A. T.-C.:  Department of Mathematics, Faculty of Arts and Sciences, American University of Beirut, P.O. Box 11-0236, Riad El Solh, Beirut 1107 2020, Lebanon
\newline 
V. \v{Z}: Masaryk University, Faculty of Education, Po\v{r}\'\i\v{c}\'\i\ 31, 60300 Brno, Czech Republic
}
\email{matthias.hammerl@univie.ac.at, katja@cft.edu.pl, silhan@math.muni.cz, at68@aub.edu.lb, zadnik@mail.muni.cz}
\date{\today}
\subjclass[2000]{53A20, 53A30, 53B30, 53C07} 
\keywords{Differential geometry, Parabolic geometry, Projective structure, Conformal structure, Einstein metrics, Conformal Killing field, Twistor spinors}
\begin{document}

\begin{abstract}
The classical Patterson--Walker construction of a split\hyp{}signature (pseudo-)Riemannian structure from a given torsion-free affine connection is generalized to a construction of a split\hyp{}signature conformal structure from a given projective class of connections. A characterization of the induced structures is obtained. We achieve a complete description of Einstein metrics in the conformal class formed by the Patterson--Walker metric. Finally, we describe all symmetries of the conformal Patterson--Walker metric. In both cases we obtain descriptions in terms of geometric data on the original structure.
\end{abstract}

\maketitle



\section{Introduction}

Given a torsion-free affine connection $D$ on a smooth $n$-dimensional 
manifold
$M$, the classical Patterson--Walker construction \cite{patterson-walker} 
yields
a split-signature $(n,n)$ pseudo-Riemannian metric $g$ on the 
total space
of the cotangent bundle $T^*M$. 
The metric $g$ is determined by the natural pairing of 
the vertical distribution $V$ of $T^*M$ and the horizontal distribution $H\cong TM$ on $T^*M$. 
In particular, $V$ and $H$ (as determined by $D$) are totally isotropic with respect to $g$. Such metrics are endowed with a parallel pure spinor and a homothety, and satisfy an integrability condition on the Riemann curvature tensor. 
We shall show  in section \ref{sec-Riem-ext} that Patterson--Walker metrics are locally characterized by these data.

When $n=2$, this construction is generalised in \cite{dunajski-tod} where a \emph{conformal class} of Patterson--Walker metrics is assigned to a \emph{projective class} of volume-preserving torsion-free affine connections. As we shall see, this extends to any dimension. In order to accommodate projective invariance in this construction, we must replace $T^*M$ by the density-valued cotangent bundle $T^*M(2)$.
Recall that the projective class $\mb{p}$ containing $D$ is formed by all torsion-free affine connections which share the same geodesics (as unparametrized
curves) as $D$. We shall suppose in addition that $D$ preserves a volume form on $M$, and as such will be referred to as special. Then special connections $D, \widehat{D} \in \mb{p}$ give rise to 
Patterson--Walker metrics $g, \hat{g}$ on $T^*M(2)$
which are conformally related, i.e.\ $\hat{g}=\ee^{2f} g$ for some smooth function $f$ on $M$. In other words, 
the projective
structure $(M,\mb{p})$ induces a split-signature conformal structure
$(T^*M(2),\mb{c})$, see section~\ref{sec-construction} for details.

Notice that certain geometrical data are to be expected on the conformal manifold
$(\wt{M},\mb{c})$ induced from a projective class $(M,\mb{p})$.
Firstly, there is a distinguished vector field $k$ corresponding to the 
Euler vector field
on $T^*M(2)$. Secondly, there is an $n$-dimensional integrable 
distribution $\Vertical$ on $\wt{M}$ corresponding to the vertical subbundle of $T^*M(2)$.
In fact, this distribution can be conveniently defined via a distinguished pure 
spinor field $\chi$ annihilating $\Vertical$. Here purity of $\chi$ 
corresponds to $\Vertical=\ker\chi$ being maximally isotropic. Further, one expects an integrability 
condition imposed
on the curvature of metrics in $\mb{c}$ and this we shall formulate
in terms of the (conformally invariant) Weyl tensor $\wt{W}_{abcd}$ of 
$\mb{c}$. 
Our characterization result, proved in section~\ref{sec-char}, is then
\begin{thm}\label{thm-char}
  A conformal spin structure $\mb c$ of split signature $(n,n)$ on a manifold $\wt{
  M}$ is locally induced by an $n$-dimensional projective structure as a conformal Patterson--Walker metric if and only if the following properties are satisfied:
\begin{enumerate}[(a)]
\item $(\wt{M},\mb{c})$ admits a pure spinor $\chi$  with (maximally isotropic, $n$-dimensional) integrable kernel $\ker \chi$ satisfying the \emph{twistor spinor equation}
\begin{align}\label{eq-twistor}
  \wt{D}_a\chi+\frac{1}{2n}\ga_a \dirac\chi=0 \, , 
\end{align}
where $\dirac=\ga^c\wt{D}_c$ is the Dirac operator and $\ga$ denotes the Clifford multiplication.
\item $(\wt{M},\mb{c})$ admits a (light-like) conformal Killing field $k$ with $k\in\ker \chi$.
\item
The Lie derivative of $\chi$ with respect to the conformal Killing field $k$ is
\begin{align}\label{eq-Liechi}
  \mathcal{L}_k \chi = -\frac12  (n+1) \chi \, .
\end{align}
\item The following integrability condition is satisfied for all $v^r, w^s\in\ker\chi$:
  \begin{align}\label{eq-weylcond}
    \wt{W}_{abcd} v^aw^d & =0 \, .
  \end{align}
\end{enumerate}
\end{thm}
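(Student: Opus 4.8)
The statement is an equivalence, so the plan is to treat the two implications separately. For the \emph{necessity} of (a)--(d), I would start from the explicit form of the construction in section~\ref{sec-construction}: a special connection $D$ in the projective class yields a Patterson--Walker metric $g$ on $\widetilde M=T^*M(2)$, and a different choice of special connection rescales $g$ conformally. The vertical bundle $V=\ker d\pi$ of the projection $\pi\colon\widetilde M\to M$ is $n$-dimensional, integrable and totally isotropic for $g$, hence it is the kernel of a pure spinor $\chi$ naturally attached to the splitting; since a Patterson--Walker metric carries a parallel pure spinor and the twistor operator is conformally invariant, $\chi$ satisfies~\eqref{eq-twistor}, which is (a). The Euler field $k$ of the density bundle is tangent to the fibres, so $k\in V=\ker\chi$, and $g$ is homogeneous under the fibre dilations generated by $k$, whence $k$ is a conformal Killing field and (b) holds; the weight of this homogeneity, together with the density twist ``$(2)$'', is what produces the normalisation~\eqref{eq-Liechi} in (c). Finally, the curvature of $g$ is assembled from the projective curvature of $\mb p$, which in the outer slots only involves horizontal directions; translating this to the conformally invariant Weyl tensor gives the vanishing~\eqref{eq-weylcond}, i.e. (d).

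For the \emph{sufficiency} direction I would argue as follows. Properties (a)--(d) are precisely the data of a holonomy reduction of the conformal spin tractor bundle: the pure twistor spinor $\chi$ corresponds to a parallel pure tractor spinor, the conformal Killing field $k$ to the adjoint tractor it determines, constrained by (b)--(c) to lie in the sub-structure singled out by $\chi$, and~\eqref{eq-Liechi},~\eqref{eq-weylcond} to the remaining algebraic and curvature conditions pinning down the relevant stabiliser. One then recognises this reduced structure as the image of a Fefferman-type correspondence: since $\ker\chi$ is integrable and $n$-dimensional it is, locally, the vertical bundle of a submersion $\pi\colon\widetilde M\to M$ onto an $n$-manifold, and the parallel pure spinor distinguishes a horizontal complement together with an affine structure along the leaves. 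In fibre-adapted coordinates $(x^i,p_i)$ with $x^i=\pi^*(\text{coordinates on }M)$ one shows that, after passing to the conformal scale singled out by $\chi$ and to the normalisation forced by~\eqref{eq-Liechi} (which identifies $k$ with the Euler field $p_i\,\partial_{p_i}$), the metric acquires the Patterson--Walker normal form determined by functions $\Pi_{ij}{}^k(x)$ on $M$. Torsion-freeness and symmetry of the $\Pi_{ij}{}^k$ follow from~\eqref{eq-twistor}, and the ambiguity in the choice of adapted coordinates and conformal scale changes the $\Pi_{ij}{}^k$ exactly by a projective change of a special connection; thus $(M,\mb p)$ is well defined and reproduces $\mb c$.

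I expect the \emph{main obstacle} to be the sufficiency direction, and within it the step of extracting the Patterson--Walker normal form from the abstract data: one must show that purity plus the twistor equation~\eqref{eq-twistor}, supplemented by (b)--(c), leave no freedom in $g$ beyond a symmetric, fibre-linear term governed by functions on the quotient $M$, and that~\eqref{eq-weylcond} is precisely the condition guaranteeing that those functions assemble into a genuine, unobstructed projective structure rather than something weaker. The cleanest route is to carry out this analysis at the level of the associated parabolic geometries, where the construction is a well-understood Fefferman-type functor and (a)--(d) are its characterising holonomy conditions; the bulk of the labour is then the tractor and BGG bookkeeping needed to match the conformal side with the projective side and to check that the respective curvature normalisations agree.
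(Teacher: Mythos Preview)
Your necessity argument matches the paper's (Proposition~\ref{prop-conf-extension}): $\chi$ is parallel for a Patterson--Walker representative, hence a twistor spinor; $k$ is the Euler field and a homothety; the Lie-derivative normalisation \eqref{eq-Liechi} follows from \eqref{liederivative}, \eqref{eq-CKf2homothety} and \eqref{eq-mu-eigen}; and \eqref{eq-weylcond} is read off from the explicit Weyl formula \eqref{eq-Walker-Weyl}.

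For sufficiency you take a genuinely different route. You propose to interpret (a)--(d) as a holonomy reduction of the conformal tractor bundle and to invoke the Fefferman-type functor between the projective and conformal parabolic geometries. That is exactly the strategy of the companion paper \cite{hsstz-fefferman}, which the present article discusses in section~\ref{relation} but deliberately avoids. The paper's own argument is purely local and spinorial: its key step is Proposition~\ref{prop-twistor-spinor2parallel}, which shows that a pure twistor spinor with integrable kernel can be made \emph{parallel} by a conformal rescaling, and that any two such scales differ by a function constant along $V$. Once $\chi$ is parallel, Lemma~\ref{CKf2homothety} upgrades the conformal Killing field $k$ to an honest homothety satisfying \eqref{eq-CKf2homothety}, condition \eqref{eq-weylcond} becomes the Riemann-tensor condition \eqref{eq-Weyl-cond-PW}, and then the Riemannian characterisation Proposition~\ref{prop-desc-PW} delivers the Patterson--Walker normal form directly; the residual freedom in the parallelising scale is precisely a projective change of special connection on the leaf space.

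Your sentence ``passing to the conformal scale singled out by $\chi$'' hides the non-trivial content of Proposition~\ref{prop-twistor-spinor2parallel}: a priori a twistor spinor is not parallel for any metric in the class, and one must solve an integrability problem (equations \eqref{eq-potential}--\eqref{eq-int_cond-potential}) to find such a scale. Without this step the passage to fibre-adapted coordinates with metric of the form \eqref{eq-parallel_spinor_metric} is unjustified. Your tractor-calculus alternative is viable but considerably heavier machinery; the paper's point is that the spinor approach gives a shorter, self-contained proof and a cleaner characterisation than the Cartan-geometric one.
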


In section~\ref{sec-einstein}, we achieve a complete description of {Einstein metrics} within the induced conformal class in terms of the underlying geometric objects.   In what follows, 
$R \ind{_{DA}^C_B}$ is the curvature tensor of a torsion-free affine connection $D_A$ and
$W \ind{_{DA}^C_B}$ is the (projectively invariant) totally trace-free part of $R \ind{_{DA}^C_B}$.
That is, we use abstract indices $\wt{\mc{E}}^a \cong T\wt{M}$ on $\wt{M}$ and $\mc{E}^A \cong T{M}$ on ${M}$, and we shall not distinguish between bundles and spaces of sections notationally. Let us emphasize that the theorems below involve certain projectively invariant differential operators, and to formulate the invariance precisely will require the use of density-valued tensor fields. Leaving these details aside for the time being, the results can be stated as follows:
\begin{thm}\label{thm-scales}
\hfill
\begin{enumerate}[(a)]
\item If the affine connection $D$ is Ricci-flat, then the induced Patterson--Walker metric $g$ is Ricci-flat.
\item If the affine connection $D$ admits an {Euler-type} vector field $\xi$ satisfying the projectively invariant equation
\begin{align}\label{eq-euler}
  D_C\xi^A=\frac{1}{n}\de^A_C D_P\xi^P
\end{align}
and the integrability condition $\xi^D W \ind{_{DA}^C_B}  = 0$,
then the induced Patterson--Walker metric $g$ is conformal to a Ricci-flat metric $\si_{\xi}^{-2}g$ off the zero-set of a rescaling function $\si_{\xi}$.
\end{enumerate}

In fact, any Einstein metric in the conformal class $\mb{c}$ can be uniquely
decomposed into two Einstein metrics of such types.
\end{thm}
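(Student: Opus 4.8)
\emph{Strategy.} The plan is to reduce the classification of Einstein metrics in $\mb c$ to the classification of almost-Einstein scales of $\mb c$, i.e.\ weight-one conformal densities $\si$ on $\wt M$ annihilated by the linear, conformally invariant operator $\si\mapsto\mr{tf}(\wt D_a\wt D_b\si+\wt\PP_{ab}\si)$, where $\wt\PP_{ab}$ is the conformal Schouten tensor and $\mr{tf}$ denotes the trace-free part. I would use the standard facts that each such $\si$ prolongs to a parallel standard conformal tractor $\mb I$, that $\si^{-2}g$ is Einstein off the zero set of $\si$, and that $\si^{-2}g$ is Ricci-flat precisely when $\mb I$ is null. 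From the explicit form of the Patterson--Walker metric in section~\ref{sec-construction}, the fibre coordinate $p_A$ of $T^*M(2)\to M$ enters $g$ --- hence its Levi--Civita connection and Schouten tensor --- as a polynomial of low degree, so I would expand $\si=\sum_{k\ge0}\si_{(k)}$ into parts homogeneous of fibre-degree $k$ and analyse the equation degree by degree. Here a degree-zero part is a pullback $\pi^*\si^{(0)}$ of a projective density on $M$, and a degree-one part is $p_A\xi^A$ for a vector field $\xi$ on $M$ --- the weight-two twisting of the cotangent bundle being exactly what makes $p_A\xi^A$ a conformal density of weight one.

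\emph{The two building blocks.} For (a): since $D$ is special, Ricci-flatness forces its projective Schouten tensor $\PP_{AB}$ to vanish; in a local special scale $T^*M(2)\cong T^*M$ and $g$ is the classical Riemann extension, whose Ricci tensor has vanishing vertical and mixed blocks and horizontal--horizontal block the (already symmetric) pullback of $\Ric(D)$, so $\Ric(D)=0$ gives $\Ric(g)=0$. For (b): given an Euler-type $\xi$, I would take $\si_\xi:=p_A\xi^A$ and verify the conformal-to-Einstein equation by expanding $\mr{tf}(\wt D_a\wt D_b\si_\xi+\wt\PP_{ab}\si_\xi)$ through the Patterson--Walker Christoffel symbols: since $\si_\xi$ is linear in $p$, most terms vanish or, using $D_C\xi^A=\tfrac1n\de^A_C D_P\xi^P$, become multiples of the metric and so drop out of the trace-free part; applying the Euler equation again together with the Ricci identity $R\ind{_{BC}^A_D}\xi^D=2\de^A_{[C}D_{B]}(\tfrac1n D_P\xi^P)$ to trade the remaining second derivatives of $\xi$ for curvature, the only surviving obstruction is a multiple of $p_C\,\xi^D W\ind{_{DA}^C_B}$ (the projective Schouten contributions recombining with $\wt\PP_{ab}\si_\xi$), which vanishes by the integrability hypothesis. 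Thus $\si_\xi$ is an almost-Einstein scale, and evaluating along the zero section --- where $\si_\xi=0$ and $g^{ab}(\wt D_a\si_\xi)(\wt D_b\si_\xi)=0$ --- shows the associated parallel tractor is null, so $\si_\xi^{-2}g$ is Ricci-flat off the zero set of $\si_\xi$.

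\emph{The decomposition.} For the final assertion I would show that every almost-Einstein scale $\si$ of $\mb c$ is a sum of one of type (a) and one of type (b). In the expansion $\si=\sum_{k\ge0}\si_{(k)}$, the operator above maps $\si_{(k)}$ into parts of degrees $k-2,\dots,k+1$, and an inspection of the top-degree contribution shows it cannot be cancelled once $\si$ has fibre-degree $N\ge2$; hence $\si_{(k)}=0$ for $k\ge2$. For $\si=\si_{(0)}+\si_{(1)}$ the degree-zero and degree-one parts of the equation then reduce, respectively, to the projectively invariant conditions that $\si^{(0)}$ be a Ricci-flat scale on $M$ (type (a)) and that $\si_{(1)}=p_A\xi^A$ with $\xi$ Euler-type and $\xi^D W\ind{_{DA}^C_B}=0$ (type (b)). Uniqueness is immediate because the fibrewise-homogeneous decomposition is unique; conversely, linearity of the equation shows any such sum is again an almost-Einstein scale.

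\emph{The main obstacle.} I expect the crux to be the last paragraph: one must keep exact track of how the Patterson--Walker Levi--Civita connection and Schouten tensor shift fibre-degree, rule out almost-Einstein scales of fibre-degree $\ge2$, and confirm that the degree-zero and degree-one components reproduce \emph{precisely} the equations of (a) and (b) with no residual constraint. The curvature bookkeeping in step (b), in which the projective Weyl tensor must be separated from the conformal Schouten tensor of $g$, will be the most computation-heavy ingredient. A cleaner but less elementary alternative would be to identify, at the level of the Cartan geometries of section~\ref{sec-construction}, the standard conformal tractor bundle of $\wt M$ with the pullback of $\mc T M\oplus\mc T^*M$ equipped with the sum of the projective tractor connection and its dual; parallel conformal tractors then correspond to pairs of a parallel projective tractor and a parallel projective co-tractor, which the standard projective BGG calculus translates into the data of (a) and (b).
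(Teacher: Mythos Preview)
Your proposal is correct and follows essentially the same route as the paper: both reduce to almost Einstein scales, verify the two building blocks by the explicit Patterson--Walker formulae (the paper's key computation is exactly your step (b), isolating $p_C\,\xi^D W\ind{_{DA}^C_B}$ as the sole obstruction), and obtain the decomposition via fibre-homogeneity in $p_A$. The one organizational difference is that the paper packages the decomposition through the conformal Killing field $k=2p_A\partial/\partial p_A$: contracting the almost-Einstein equation with $k^ak^b$ (using $\wt\Rho_{ab}k^b=0$) gives $\mc L_k^2\si=\si$, hence an eigenspace splitting $\si=\si_++\si_-$ with $\mc L_k\si_\pm=\pm\si_\pm$, which is precisely your degree-$1$/degree-$0$ split --- this replaces your ``top-degree cannot be cancelled'' step by the single observation $\chi^{aA}\chi^{bB}\wt D_a\wt D_b\si=0$, i.e.\ $\partial^2\si/\partial p_A\partial p_B=0$.
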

Part (a) is a well-known fact for Patterson--Walker metrics that was already observed in \cite{patterson-walker, derdzinski}, and which we recover. To our knowledge, the construction of Ricci--flat Einstein metrics of part (b) is new, as is the decomposition result for general Einstein metrics. The decomposition of general Einstein metrics in $\mb{c}$ can be understood explicitly:
if the Patterson--Walker metric $g$ is conformal to an Einstein metric $\si^{-2}g$, then there is a canonical decomposition 
  \begin{align*}
    \si=\si_++\si_-
  \end{align*}
such that both $g_+=\si_+^{-2}g$ and $g_-=\si_-^{-2}g$ are Ricci-flat off the respective zero-sets of $\si_\pm$.
Further, there is a Ricci-flat affine connection $D_-$ projectively related to $D$,
which induces the Ricci-flat Patterson--Walker metric $g_-$, and
an Euler-type vector field $\xi$ for $D$ satisfying \eqref{eq-euler} and the integrability condition $\xi^D W \ind{_{DA}^C_B}  = 0$ such that $g_+=\si_{\xi}^{-2}g$.

Finally, in section \ref{sec-infsym} we study the Riemannian and conformal symmetries of the induced Patterson--Walker metric and present their complete description in terms of affine and projective properties of $D$ and of $\mb{p}$, respectively.
Since the construction of the conformal structure $\mb{c}$ on $\wt{M}=T^*M(2)$ is natural, symmetries of the projective structure $\mb{p}$ give rise to conformal symmetries (i.e.\ conformal Killing fields) of $\mb{c}$.
In fact, we can completely and explicitly understand the space of conformal Killing fields of $\mb{c}$ in terms of solutions to projectively invariant equations:

\begin{thm}\label{thm-confkillingfields}
\hfill
  \begin{enumerate}[(a)]
  \item Any infinitesimal symmetry $v^A$ of the projective structure $\mb{p}$ induces a conformal Killing field $\wt{v}^a_0$ of $\mb{c}$.
\item Any skew-symmetric bivector $w^{AB}$ satisfying the projectively invariant equation
  \begin{align}
    D_C w^{AB}=-\frac{2}{n-1}\,\de_C^{[A}D_P w^{B]P}
  \end{align}
    and the integrability condition
$w^{B(A} W\ind{_{B(C}^{D)}_{E)}}=0$
  induces a conformal Killing field $\wt{v}^a_+$ of $\mb{c}$.
  \item Any Killing $1$-form, i.e.\ a $1$-form $\al_A$ satisfying $D_{(A}\al_{B)}=0$,
    induces a conformal Killing field $\wt{v}^a_-$ of $\mb{c}$.
  \end{enumerate}
  In fact, any conformal Killing field of $\mb{c}$ can be uniquely decomposed as
a direct sum
$\wt{v}^a_+ +  \wt{v}^a _0 + \wt{v}^a _- + c \, k^a$ of components which correspond to solutions to the respective projective equations and a constant multiple of $k$.
\end{thm}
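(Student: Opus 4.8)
The plan is to reduce the conformal Killing equation on $\wt{M}=T^*M(2)$ to a hierarchy of projectively invariant equations on $M$, exploiting the explicit form of the Patterson--Walker metric from section~\ref{sec-construction} together with the homogeneity furnished by the Euler field $k$. For the existence statements (a)--(c) one exhibits the fields directly. Part (a) is naturality of the construction: a local automorphism of $\mb{p}$ lifts canonically to $T^*M(2)$ preserving the density-valued tautological form, hence preserving $\mb{c}$; differentiating the flow of an infinitesimal projective symmetry $v^A$ yields a conformal Killing field $\wt{v}^a_0$ of the schematic form $v^A\partial_{x^A}-(p_BD_Av^B+\dots)\partial_{p_A}$ in adapted coordinates.

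For (b) and (c) one writes down the candidate fields on $\wt{M}$ from the projective data --- for a bivector $w^{AB}$ the fibrewise-linear horizontal field $w^{AB}p_B\partial_{x^A}$ corrected by a vertical term, and for a Killing $1$-form $\al_A$ the vertical field determined by $\al_A$ --- and checks that $\mathcal{L}_{\wt{v}}g$ is proportional to $g$. In case (b) the equation $D_Cw^{AB}=-\tfrac{2}{n-1}\de_C^{[A}D_Pw^{B]P}$ together with $w^{B(A}W\ind{_{B(C}^{D)}_{E)}}=0$ cancels all non-conformal terms, and in case (c) the equation $D_{(A}\al_{B)}=0$ does the same. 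Carrying the correct density weights throughout is what makes every ingredient projectively well defined; this bookkeeping is the only delicate point in this half of the argument.

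The substance of the theorem is the converse and the uniqueness of the decomposition. From the block structure of $g$ (with $\Vertical$ and $H$ isotropic and mutually paired) one first checks that every conformal Killing field of $\mb{c}$ is polynomial in the fibre coordinates of bounded degree; since $k$ generates the fibrewise scaling and $[\mathcal{L}_k,\mathcal{L}_{\wt{v}}]=\mathcal{L}_{[k,\wt{v}]}$, the finite-dimensional space of conformal Killing fields splits into finitely many $k$-homogeneous components, each again conformal Killing. For a $k$-homogeneous $\wt{v}$ one expands $\mathcal{L}_{\wt{v}}g=2\phi\,g$ in powers of the fibre coordinate $p$: because the components of $g$ and of $g^{-1}$ are polynomial of degree $\le1$ in $p$, the leading relations bound the fibre-degree of $\wt{v}$ and determine it from base data $(v^A,w^{AB},\al_A,c)$, while the remaining relations say precisely that $v^A$ is an infinitesimal symmetry of $\mb{p}$, that $w^{AB}$ solves the first BGG equation of (b) with its integrability condition, that $D_{(A}\al_{B)}=0$, and that the last piece is a constant multiple of $k$. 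This yields simultaneously completeness and the decomposition $\wt{v}^a=\wt{v}^a_++\wt{v}^a_0+\wt{v}^a_-+c\,k^a$; uniqueness is immediate, the four summands occupying distinct $k$-weights and, within weight zero, being separated by the projection to $TM$.

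I expect the coefficient-matching just described to be the main obstacle: the expansion of $\mathcal{L}_{\wt{v}}g$ in the fibre coordinates has to be organised so that the hierarchy collapses exactly onto the stated operators --- with the normalisation $-\tfrac{2}{n-1}$ and with the integrability conditions $\xi^DW\ind{_{DA}^C_B}=0$ and $w^{B(A}W\ind{_{B(C}^{D)}_{E)}}=0$ appearing in the right tensorial form --- all carried out in a manifestly projectively invariant, density-valued manner. A cleaner equivalent route, useful as a cross-check, uses that $(\wt{M},\mb{c})$ arises from $(M,\mb{p})$ by a Fefferman-type construction: the conformal standard tractor bundle is then $\mathcal{T}\oplus\mathcal{T}^*$ for the projective standard tractor $\mathcal{T}$, with the canonical pairing as tractor metric, so the conformal adjoint tractor bundle decomposes under the induced holonomy reduction $\mathfrak{so}(n+1,n+1)\supset\mathfrak{sl}(n+1)$ as $\mathfrak{sl}(\mathcal{T})\oplus\underline{\mathbb{R}}\oplus\Lambda^2\mathcal{T}\oplus\Lambda^2\mathcal{T}^*$. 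By the standard parabolic-geometry correspondence between infinitesimal automorphisms and adjoint tractors, the conformal Killing fields of $\mb{c}$ are the normal solutions of the first BGG operators attached to these summands --- infinitesimal projective symmetries for $\mathfrak{sl}(\mathcal{T})$, constant multiples of $k$ for the centre $\underline{\mathbb{R}}$, and normal solutions of the bivector and Killing-$1$-form BGG operators for $\Lambda^2\mathcal{T}$ and $\Lambda^2\mathcal{T}^*$ --- the integrability conditions being precisely the obstructions to normality. This recovers the asserted decomposition $\wt{v}^a_++\wt{v}^a_0+\wt{v}^a_-+c\,k^a$.
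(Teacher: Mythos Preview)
Your strategy matches the paper's: explicit lifts for (a)--(c), then for the converse a decomposition of conformal Killing fields into $\mc{L}_k$-eigenspaces followed by a reduction of each piece to the corresponding projective equation. The paper carries this out in the spinor calculus set up in section~\ref{sec-construction}, writing down the lifts \eqref{eq-lift0}--\eqref{eq-lift-} and computing $\wt{D}_{(a}\wt{v}_{b)}$ directly (Proposition~\ref{prop-confkillinglift}); your coordinate description is the same computation in different notation.

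There is one genuine soft spot in your sketch of the converse. You assert that the finite-dimensional space of conformal Killing fields ``splits into finitely many $k$-homogeneous components'', but finite-dimensionality alone does not give this: $\mc{L}_k$ could a priori have Jordan blocks, and nothing you have said rules out eigenvalues other than $0,\pm 2$. The paper's mechanism here is a concrete identity: using the prolongation \eqref{eq-conf-iso-prol1}--\eqref{eq-conf-iso-prol4} together with the Patterson--Walker curvature formulae \eqref{eq-Walker-Weyl}, \eqref{eq-Walker-Rho} (in particular $k^a k^b \wt{D}_a\wt{D}_b\wt{v}_c=0$), one computes directly that $\mc{L}_k^3\wt{v}=4\,\mc{L}_k\wt{v}$ on conformal Killing fields, i.e.\ $\mc{L}_k(\mc{L}_k-2)(\mc{L}_k+2)=0$. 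This gives semisimplicity and the eigenvalue set $\{0,\pm 2\}$ in one stroke. Your polynomiality claim is likewise not a consequence of the block form of $g$ alone; in the paper it again comes from contracting the prolonged equation with vertical fields, yielding $\partial^2\wt{v}^C/\partial p_A\partial p_B=0$ and $\partial^3\wt{\al}_D/\partial p_A\partial p_B\partial p_C=0$. Once you have these two inputs, the rest of your plan (matching coefficients degree by degree to recover \eqref{eq-proj-sym}, \eqref{eq-bivec} with \eqref{eq-int-cond-bivec}, and \eqref{eq-proj-Killing}) is exactly what the paper does in Proposition~\ref{prop-conf-iso2lift}. The separation of the weight-zero piece from $c\,k^a$ requires an extra normalisation (the paper imposes $\mu\ind{^a_b}\wt{D}_a\wt{v}_0^b-\tfrac{1}{n}\wt{D}_c\wt{v}_0^c=0$), not merely the projection to $TM$.

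Your alternative tractor route is also the one the paper sketches in section~\ref{relation}: the conformal adjoint tractor bundle decomposes as $\rr\oplus\sl(n+1)\oplus\La^2\rr^{n+1}\oplus\La^2(\rr^{n+1})^*$ under the induced $\SL(n+1)$-reduction, and the four summands correspond to the four pieces of the decomposition.
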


Likewise, the construction of the Patterson--Walker metric $g$ from a torsion-free affine connection $D$ is natural, hence any symmetry of $D$ gives a symmetry of $g$, i.e.\ a Killing field. 
In fact, we obtain a complete description of the space of Killing fields of $g$ in terms of affine data:

\begin{thm}\label{thm-killingfields}
\hfill
  \begin{enumerate}[(a)]
  \item Any infinitesimal symmetry $v^A$ of the affine connection $D$ induces a Killing field $\wt{v}^a_0$ of $g$.
\item Any parallel bivector $w^{AB}$ for the affine connection $D$, $D_C w^{AB}=0$, which satisfies the integrability condition $w^{B(A} R\ind{_{B(C}^{D)}_{E)}}=0$ induces a Killing field $\wt{v}^a_+$ of $g$.
  \item Any Killing $1$-form $\al_A$, $D_{(A}\al_{B)}=0$, induces a Killing field $\wt{v}_-^a$ of $g$.
  \end{enumerate}
In fact, any Killing field of $g$ can be uniquely decomposed as
a direct sum $\wt{v}^a_+ +  \wt{v}^a _0 + \wt{v}^a _-$ of components which correspond to solutions to the respective affine equations.
\end{thm}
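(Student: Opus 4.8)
The proof proposal: This theorem should be a "de-conformalization" of Theorem \ref{thm-confkillingfields}, which has already classified conformal Killing fields of $\mb{c}$ in terms of projective data. The plan is to observe that a Killing field of $g$ is in particular a conformal Killing field of $\mb{c}$, so it must lie in the space decomposed in Theorem \ref{thm-confkillingfields} as $\wt{v}^a_+ + \wt{v}^a_0 + \wt{v}^a_- + c\,k^a$; the task is then to determine, for each of the four summands, the precise condition under which the conformal Killing field is actually a Killing field for the chosen scale $g$ (not merely for some metric in $\mb{c}$). Let me sketch what I expect for each piece.

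First I would treat part (a): a symmetry $v^A$ of $D$ is in particular a symmetry of the projective structure $\mb{p}$, and the naturality of the Patterson--Walker construction (as opposed to the merely conformally natural construction of $\mb{c}$) means the complete lift $\wt{v}_0^a$ of $v^A$ preserves $g$ on the nose, not just up to scale. So this reduces to checking $\mathcal{L}_{\wt{v}_0} g = 0$ by a direct computation in adapted coordinates $(x^A, p_A)$ on $T^*M$, using the explicit local form of $g$ (which will have been written down in section~\ref{sec-construction}); the lift of an affine symmetry acts on fibre coordinates by the dual of its derivative, and one checks the cross-terms and the inhomogeneous term coming from $\mathcal{L}_v D = 0$ cancel. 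For part (c), a Killing $1$-form $\al_A$ defines a function $\wt\al := \al_A p^A$ (linear along the fibres, i.e.\ of the appropriate homogeneity) on $T^*M(2)$, and I expect the Killing field to be essentially the Hamiltonian vector field of this function with respect to the tautological structure, or equivalently a vertical-type lift; the equation $D_{(A}\al_{B)} = 0$ is exactly what makes $\mathcal{L}_{\wt{v}_-}g = 0$. For part (b), a parallel bivector $w^{AB}$ is in particular a solution of the projective bivector equation appearing in Theorem \ref{thm-confkillingfields}(b) (parallelism is the special "scale" where the right-hand side vanishes and the connection is the distinguished one), so it produces a conformal Killing field $\wt{v}_+^a$; here the extra hypothesis that $D$ itself — not merely some projectively equivalent connection — is the relevant one forces $\wt{v}_+$ to preserve $g$ rather than $\ee^{2f}g$.

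For the decomposition statement, the plan is: given any Killing field $\wt{v}$ of $g$, apply Theorem \ref{thm-confkillingfields} to write $\wt{v} = \wt{v}_+ + \wt{v}_0 + \wt{v}_- + c\,k$ with the four components solving the respective \emph{projective} equations. Then impose $\mathcal{L}_{\wt{v}}g = 0$. Since $\mathcal{L}_k g = 2g$ (from the homothety property — $k$ is the Euler field, whose action rescales $g$), the conformal factors produced by the four summands must sum to zero; decomposing the function $\mathcal{L}_{\wt{v}}g / g$ according to its homogeneity along the fibres of $T^*M(2)$ (each component $\wt{v}_\pm, \wt{v}_0, k$ contributes in a different, separable way) forces each conformal factor to vanish individually, hence $c = 0$ and each of $\wt{v}_+, \wt{v}_0, \wt{v}_-$ is separately Killing. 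Running the analysis of (a)--(c) in reverse then upgrades the projective equations to the affine ones: the projective symmetry equation becomes the affine symmetry equation $\mathcal{L}_v D = 0$, the projective bivector equation becomes $D_C w^{AB} = 0$, and the Killing $1$-form equation is already affine. Uniqueness of the decomposition is inherited from Theorem \ref{thm-confkillingfields}.

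The main obstacle I anticipate is the bookkeeping in part (b) and in the "reverse" direction of the decomposition: one must pin down exactly how the freedom of rescaling $D$ within $\mb{p}$ interacts with the rescaling of $g$ within $\mb{c}$, i.e.\ show that demanding $g$ itself (the Patterson--Walker metric of the \emph{given} special connection $D$) be preserved is equivalent to the projective solution being "parallel for $D$" rather than for a projectively modified connection. This is where the density-valued formalism on $T^*M(2)$ is essential and where a careless identification of scales would give the wrong equation; concretely, it amounts to tracking the inhomogeneous term in $D_C w^{AB} + \tfrac{2}{n-1}\de_C^{[A}D_P w^{B]P}$ under a projective change $\widehat{D} = D + \Upsilon$ and matching it against the conformal factor $f$ with $\hat g = \ee^{2f}g$. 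Everything else is a sequence of Lie-derivative computations in adapted coordinates that parallel those already carried out for the conformal case in section~\ref{sec-infsym}.
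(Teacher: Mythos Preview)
Your overall strategy is correct and matches the paper's: Proposition~\ref{prop-iso2lift} explicitly recycles the proof of Proposition~\ref{prop-conf-iso2lift} (the conformal case), using that a Killing field is in particular conformal Killing, and then imposes the stronger equation $\wt{D}_{(a}\wt{v}_{b)}=0$ on each homogeneous component to upgrade the projective equations to affine ones.

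There is, however, a genuine error in your homogeneity argument for the decomposition. You claim that decomposing $\mathcal{L}_{\wt{v}}g/g$ by fibre homogeneity forces each conformal factor to vanish separately and hence $c=0$. This fails: with respect to a Patterson--Walker scale, the conformal factor of $\wt{v}_+$ is $\nu^C p_C$ (degree~$1$ in $p_A$), that of $\wt{v}_-$ vanishes, but the conformal factor of the projective lift $\wt{v}_0^{\mathrm{conf}}$ from \eqref{eq-lift0} is $-\tfrac{n}{n+1}\psi$ with $\psi=\tfrac{1}{n}D_Cv^C$, which is degree~$0$ in $p_A$, exactly like the factor $-c$ coming from $ck^a$. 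Homogeneity therefore only yields $\nu^A=0$ (whence $D_Cw^{AB}=0$) and $\tfrac{n}{n+1}\psi+c=0$; since $c$ is constant this forces $\psi$ constant, but in general $c\neq 0$. What actually happens is that the constant multiple of $k^a$ is absorbed into a \emph{modified} lift of $v^A$: the paper's affine lift \eqref{eq-lift0aff} differs from the conformal lift \eqref{eq-lift0} precisely by $-\tfrac{n}{n+1}\psi\,k^a$, and the Remark following \eqref{eq-lift-aff} makes this adjustment explicit. With this corrected lift, the direct computation \eqref{eq-affsym2Kf} then shows that the Killing equation for $\wt{v}_0$ is equivalent to the affine symmetry equation \eqref{eq-affine-sym} (and incidentally forces $D_A\psi=0$). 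The shift of the integrability condition in part~(b) from $W$ to $R$ likewise comes out of the analogous computation \eqref{eq-bivec2CKf} once $\nu^A=0$ is imposed; your anticipated bookkeeping with projective changes of $D$ is not actually needed, since one works throughout in the fixed scale of $g$.
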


The approach of the present paper is based on an extension of the two-spinor calculus of \cite{penrose-rindler-86} to higher dimensions, already used in \cite{hughston-mason}, and developed more fully in \cite{Taghavi-Chabert2012a,Taghavi-Chabert2013}. We shall set up this spinor calculus in section \ref{sec-construction} and employ it to directly derive relationships between the original projective geometry and the induced conformal structure. A major step, which is particularly tailored for this approach, is our parallelizability result for pure twistor spinors with integrable distributions, Proposition \ref{prop-twistor-spinor2parallel}, upon which Theorem \ref{thm-char} hinges.

Projective and conformal geometries are instances of Cartan geometries, or more specifically, parabolic geometries. The  geometric relationship studied in this article fits into the larger framework of so-called Fefferman-type constructions. These were originally put forward by the authors of \cite{fefferman} and \cite{graham} in their investigation of CR structures. In the present context, the recent article \cite{hsstz-fefferman} takes the same perspective, and includes a characterisation 
result closely related to Theorem \ref{thm-char}. The relation with the treatment set forth herein is briefly described in section \ref{relation}. The spinor-theoretic approach allows a succinct treatment, gives a shorter statement for the characterization of the induced structures than the one presented in \cite{hsstz-fefferman}, and allows us to give explicit descriptions of the Einstein metrics in the induced conformal class of metrics.

\subsubsection*{Funding}
This work was supported by the Austrian Science Fund [J3071-N13 to K. S.]; the Czech Science Foundation [P201/12/G028 to J. \v{S}., GP14-27885P to A. T.-C.]; and the University of Turin to [A. T.-C.].

K. S. was supported by the National Science Centre Poland (NCN) via the POLONEZ grant 2016/23/P/ST1/04148. This project has received funding from the European Union's Horizon 2020 research and innovation programme under the Marie Sk\l odowska-Curie grant agreement No 665778. \includegraphics[width=0.05\textwidth]{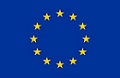}

A. T.-C. was supported by a long-term faculty development grant from the American University of Beirut for his visit to IMPAN, Warsaw, where the final revisions of the article were carried out.

\subsubsection*{Acknowledgments}
The authors express special thanks to Maciej Dunajski for motivating the study of this construction and for a number of enlightening discussions on this and adjacent topics.

\section{Patterson--Walker metrics}\label{sec-Riem-ext}
Riemann extensions of affine connected spaces were first described in
\cite{patterson-walker}.
They are pseudo-Riemannian metrics on the total space of the cotangent bundle $\pi: T^*M\to M$ associated to torsion-free affine connections on $M$ as follows:
An affine connection $D$ determines a horizontal distribution
$H\subset T(T^*M)$ complementary to the vertical distribution $V$ of
the bundle projection $\pi$.
Via the tangent map of $\pi$, the bundle $H$ is isomorphic to $TM$, whilst $V$ is canonically isomorphic to $T^*M$.
\begin{defn}
  The  \emph{Riemann extension} or the \emph{Patterson--Walker metric}
  associated to a torsion-free affine connection $D$ on $M$ is the split-signature metric $g$ on $\wt{M}:=T^* M$
  fully determined by the following conditions:
  \begin{enumerate}[(a)]
    \item both $V$ and $H$ are isotropic with respect to $g$,
    \item the value of $g$ with one entry from $V$ and another entry from $H$ is given by the natural pairing between $V\cong T^*M$ and $H\cong TM$.
      \end{enumerate}
\end{defn}
It follows that $V$ is parallel with respect to the Levi-Civita
connection of the constructed metric. Hence Riemann extensions are special cases of pseudo-Riemannian manifolds admitting a parallel isotropic distribution known as \emph{Walker manifolds} or \emph{Walker metrics}.

\smallskip
We can give local coordinate expression for these Riemann extensions. 
Let us introduce local coordinates $\{ x^A \}$ on $M$ and fibre coordinates $\{ p_A \}$  so that $\theta=p_A \d x^A$ is the tautological $1$-form on  $\wt{M}$. Here, indices run from $1$ to $n$, but we shall view them as abstract indices.
Let further $\Gamma \ind{_A^C_B}=\Gamma \ind{_{(A}^C_{B)}}$ be the Christoffel symbols of a torsion-free affine connection $D_A$ on $M$. The horizontal distribution $H$ associated to the affine connection $D_A$ is spanned by
\begin{align}\label{eq-HD-coord}
\parderv{}{x^A} + \Gamma \ind{_A^C_B} \, p_C \, \parderv{}{p_B} \, .
\end{align}
Defining $\alpha \odot \beta := \frac{1}{2} \left( \alpha \otimes \beta + \beta \otimes \alpha  \right)$ for any $1$-forms $\alpha$ and $\beta$, we can write the Patterson--Walker metric explicitly as
\begin{align}\label{pattersonwalker}
g &=  2 \, \d x^A \odot \d p_A -2 \, \Gamma \ind{_A^C_B} \, p_C  \, \d x^A \odot \d x^B \, ,
\end{align}
from which it is clear that both $V=\left\langle\parderv{}{p_B}\right\rangle$ and $H$ spanned by \eqref{eq-HD-coord} are indeed isotropic with respect to \eqref{pattersonwalker}.

Being oriented, the cotangent bundle $T^*M$ equipped with the Patterson--Walker metric has structure group  $\SO(n,n)$. Further, following \cite{gualtieri,karoubi}, since $T(T^*M) \cong TM \oplus T^*M$, the manifold $T^*M$ is endowed with a spin structure. Since $V$ and $H$ are totally isotropic and dual to each other via the metric, we can associate to them a pair of pure spinors defined up to scale. 
These spinors will allow us to construct projections from $T \widetilde{M}$ to $V$ and $H$.
With a slight abuse of notation to be clarified subsequently, it will be convenient to employ abstract index notation on spinor fields (see \cite{penrose-rindler-84}):
sections of the irreducible spinor bundles $\widetilde{S}_+$ and $\widetilde{S}_-$ will be adorned with primed and unprimed upper-case Roman indices, i.e.\ $\alpha^{A'} \in \widetilde{S}_+$ and $\beta^A \in \widetilde{S}_-$, and similarly for dual spinor bundles, $\kappa_{A'} \in \widetilde{S}_+^*$ and $\lambda_A \in \widetilde{S}_-^*$. In particular, the Clifford algebra of $(T \widetilde{M},g)$ is generated by the $\gamma$-matrices $\gamma \ind{_a^{B'}_A}$ and $\gamma \ind{_a^B_{A'}}$, which satisfy
\begin{align*}
\gamma \ind{_{(a}^{A'}_C} \gamma \ind{_{b)}^C_{B'}} & = - g_{ab} \wt{\delta}^{A'}_{B'} \, , & \gamma \ind{_{(a}^A_{C'}} \gamma \ind{_{b)}^{C'}_B} & = - g_{ab} \wt{\delta}_A^B \, ,
\end{align*}
where $\wt{\delta}^{A'}_{B'}$ and $\wt{\delta}^A_B$ are the identity elements on $\wt{S}_+$ and $\wt{S}_-$ respectively.

Let $\chi^{A'} \in \widetilde{S}_+$ be a spinor field annihilating $V$, and define a linear map
\begin{align*}
  \chi_a^A := \gamma \ind{_a^A_{B'}} \chi^{B'} : T \widetilde{M} \rightarrow \widetilde{S}_- \, .
\end{align*}
Then $V = \ker \chi_a^A$ since $\chi^{A'}$ is pure.
Similarly, let $\check{\eta}_{A'} \in \widetilde{S}_+^*$ be a spinor field annihilating $H$ so that $\chi^{A'}$ and $\check{\eta}_{A'}$ are dual, and chosen such that $\check{\eta}_{A'} \chi^{A'} = - \frac{1}{2}$. Defining
\begin{align*}
  {\check{\eta}}_{aA} := {\check{\eta}}_{B'} \gamma \ind{_a^{B'}_A} : T \widetilde{M} \rightarrow \widetilde{S}_-^* \, ,
\end{align*}
we then have $H = \ker {\check{\eta}}_{aA}$ since $\check{\eta}_{A'}$ is pure.

Therefore, we can identify $H$ with the image of $\chi_a^A$, and $V$ with the image of $\check{\eta}_{aA}$. In this situation the upper case Roman index refers to an $n$-dimensional representation. Viewed as projections, the spinors satisfy \cite{Taghavi-Chabert2012a}
\begin{align}\label{xinu}
\chi_a^A \chi^{aB} & = 0 \, ,  & \check{\eta}^a_A \check{\eta}_{aB} & = 0 \, , & \chi_a^A \check{\eta}^a_B & = \delta_A^B \, ,
\end{align}
where $\delta_A^B$ is the identity on $\im \chi_a^A$. In sum, we have a splitting
\begin{align*}
T \wt{M} & = V  \oplus H \cong \im \check{\eta}_{aA}  \oplus  \im \chi_a^A  \cong  \ker \chi_a^A  \oplus  \ker \check{\eta}_{aA} 
\end{align*}
where $H \cong V^*$, and for any $\wt{v}^a \in \Vertical, \wt{w}^a \in H$, we can write
\begin{align*}
  \wt{v}^a & = \wt{\alpha}_A \chi^{aA} \, , & \mbox{for some $\wt{\alpha}_A \in \im {\check{\eta}}_{aA}$,} \\
  \wt{w}^a & = \wt{\beta}^A  {\check{\eta}}^a_A  \, , & \mbox{for some $\wt{\beta}^A \in \im {\chi}^{aA}$.} 
\end{align*}

There is the freedom in rescaling both $\chi^{A'}$ and $\check{\eta}_{A'}$ such that $\chi^{A'} \check{\eta}_{A'} = -\frac{1}{2}$, which will be fixed by the following consideration. If the torsion-free affine connection $D$ preserves in addition a volume form on $M$,  then the connection $D$ is said to be \emph{special}, and all our affine connections will have this property.
This means that we can always choose our coordinates $\{ x^A \}$ such that the preserved volume form is given by $\d x^1 \wedge \ldots \wedge \d x^n$, up to constant multiple, and thus,  the Christoffel symbols satisfy $\Gamma \ind{_A^C_C}=0$. Henceforth, we denote by $\widetilde{D}_a$ the Levi-Civita connection of the Patterson--Walker metric \eqref{pattersonwalker} on $\wt{M}$ induced by a special torsion-free affine connection $D_A$ on $M$. Since $V=\left\langle\parderv{}{p_B}\right\rangle$ and $H$ is spanned by \eqref{eq-HD-coord}, we can choose
$\chi^{A'}$ and $\check{\eta}_{A'}$ such that
\begin{align}\label{eq-for-Josef}
\chi^{aA} \wt{D}_a & = \parderv{}{p_A} \, , & \check{\eta}^a_A \wt{D}_a & = \parderv{}{x^A} + \Gamma \ind{_A^C_B} \, p_C \, \parderv{}{p_B} \, ,
\end{align}
 and the non-trivial commutation relations
\begin{align*}
[ \chi^{aA} \wt{D}_a , \check{\eta}^b_B \wt{D}_b ] & = \Gamma \ind{_B^A_C} \chi^{cC} \wt{D}_c \, , &
[ \check{\eta}^a_A \wt{D}_a , \check{\eta}^b_B \wt{D}_b ] & = R \ind{_{AB}^C_D} p_C \chi^{cD} \wt{D}_c \, ,
\end{align*}
are satisfied. Here we use the convention $R \ind{_{AB}^C_D} v^D = 2 \, D_{[A} D_{B]} v^C$ for the  curvature tensor
$R \ind{_{BC}^D_A}$ of $D_A$. 
We can immediately see that $H$ is integrable if and only if $D_A$ is flat. We then obtain the Christoffel symbols $\widetilde{\Gamma} \ind{_a^c_b}$ of the connection $\widetilde{D}_a$
\begin{align*}
 \widetilde{\Gamma} \ind{_{abc}} & = 2 \, {\chi} \ind*{_a^A} {\check{\eta}} \ind*{_{[b}_B} {\chi} \ind*{_{c]}^C} \Gamma \ind{_{A}^B_C} + {\chi} \ind*{_a^A} {\chi} \ind*{_{b}^B} {\chi} \ind*{_c^C} R \ind{_{BC}^D_A} p_D \, .
\end{align*}
In particular, using \eqref{xinu} and the fact $\Gamma \ind{_A^B_C}$ is trace-free, we immediately see that the spinor $\chi^{A'}$ determined by \eqref{eq-for-Josef} is parallel. Writing $\wt{v}^a = \wt{v}^A \check{\eta}^a_A + \wt{\alpha}_A \chi^{aA}$, we have
\begin{align*}
\left( \wt{D}_a \wt{v}^b \right) \check{\eta}^a_A \chi_b^B & = \left( \parderv{}{x^A} + \Gamma \ind{_A^C_D} \, p_C \, \parderv{}{p_D} \right) \wt{v}^B + \Gamma \ind{_A^B_C} \wt{v}^C  \, , \\
\left( \wt{D}_a \wt{v}^b \right) \check{\eta}^a_A \check{\eta}_{bB} & = \left( \parderv{}{x^A} + \Gamma \ind{_A^C_D} \, p_C \, \parderv{}{p_D} \right) \wt{\alpha}_B - \Gamma \ind{_A^C_B} \wt{\alpha}_C  - \wt{v}^C R \ind{_{CB}^D_A} p_D \, , \\
\left( \wt{D}_a \wt{v}^b \right) \chi^{aA} \check{\eta}_{bB} & = \parderv{}{p_A} \wt{\alpha}_B \, , \\
\left( \wt{D}_a \wt{v}^b \right) \chi^{aA} \chi_b^B & = \parderv{}{p_A} \wt{v}^B \, .
\end{align*}
In particular, if $\wt{v}^B = v^B (x)$ and $\wt{\alpha}_B = \alpha_B  (x)$ do not depend on $p_A$, then
\begin{align}\label{eq-Dv-no-p}
 \wt{D}_a \wt{v}^b & = \left( D_A v^B \right) \chi_a^A \check{\eta}^b_B
 + \left( D_A \alpha_B  - v^C R \ind{_{CB}^D_A} p_D \right) \chi_a^A \chi^{bB} \, .
\end{align}
Next, the Riemann tensor can be computed to be
\begin{multline}\label{eq-Riem2AffCurv}
  \widetilde{R} \ind{_{abcd}} = 2 \left( \chi_a^A \chi_b^B \check{\eta}_{[c C} \chi_{d]}^D + \chi_c^A \chi_d^B \check{\eta}_{[a C} \chi_{b]}^D  \right) R \ind{_{AB}^C_D} \\
  + 2 \, \chi_{[a}^A \chi_{b]}^B \chi_c^C \chi_d^D D_A R \ind{_{CD}^E_B} p_E \, ,
\end{multline}
from which we deduce that
\begin{align}
\label{eq-Weyl-cond-PW}
  \widetilde{R}_{abcd} v^aw^d & =0  & & \mbox{for all $v^a, w^a \in V$.}
\end{align}
We have a distinguished vector field $k$ and a $2$-form $\mu$, defined by 
\begin{align}
k & := 2 \, p_A \parderv{}{p_A} \, , \label{eq-Special-K} \\
 \mu & := 2 \, \d p_A \wedge \d x^A \, . \label{eq-symp}
\end{align}
Here, we follow the convention $\alpha \wedge \beta = \frac{1}{2} \left(\alpha \otimes \beta - \beta \otimes \alpha\right)$ for any $1$-forms $\alpha$ and $\beta$.  As a $1$-form, $k_a$ is twice the tautological one-form $\theta_a$ on $T^* M$. As a skew-symmetric endomorphism, $\mu \ind{^a_b}$  acts as the identity on $H$ and as minus the identity on $V$:
\begin{align}\label{eq-mu-eigen}
\mu \ind{^a_b} \check{\eta}^b_B & = \check{\eta}^a_B \, , & \mu \ind{^a_b} \chi^{bB} & = - \chi^{aB} \, .
\end{align}
It is then straightforward to check that $k$ satisfies the \emph{conformal Killing field equation}
\begin{align}
\label{eq-CKf2homothety}
\widetilde{D}_a k_b - \mu_{ab} - g_{ab} & = 0,
\end{align}
and in particular, $k^a$ is a light-like vertical homothety, $\mc{L}_k g=2\,g$.

Now, Patterson--Walker metrics can be locally characterized as follows:
\begin{prop}\label{prop-desc-PW}
Let $(\wt{M},g)$ be a spin structure of split signature $(n,n)$ admitting a parallel pure spinor $\chi$ with integrable associated distribution $V$, and a homothety $k$ tangent to $V$ such that 
\eqref{eq-CKf2homothety} holds. Suppose further that the Riemann tensor satisfies
\eqref{eq-Weyl-cond-PW}.

Then, in a neighborhood of any point of $\wt{M}$, there exist coordinates $\{x^A,p_A\}$ such that the metric $g$ takes the form
\eqref{pattersonwalker} where $\Gamma \ind{_A^C_B}$ are the Christoffel symbols for a special torsion-free affine connection $D$ on the leaf space of $V$.
In particular, $(\wt{M},g)$ is the Riemannian extension associated to $D$.
\end{prop}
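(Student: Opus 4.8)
The plan is to reconstruct the affine data by three successive normalizations of $g$: first use the parallel pure spinor to produce a leaf space together with Walker-type coordinates, then use the curvature condition \eqref{eq-Weyl-cond-PW} together with the homothety $k$ to force the metric into the shape \eqref{pattersonwalker}, and finally use the \emph{parallelism} (rather than mere recurrence) of $\chi$ to see that the resulting connection is \emph{special}. For the first step: since $\chi$ is parallel, so is $\chi_a^A$, whence $V = \ker\chi_a^A$ is a parallel, maximally isotropic, $n$-dimensional distribution, integrable by hypothesis. Let $\pi\colon\wt M\to M$ be the local leaf space projection, so $\ker\d\pi = V$ and $\dim M = n$. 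As $V$ is a parallel totally null distribution of half the dimension, Walker's theorem yields local coordinates $(x^A, p_A)$ in which $V = \langle\partial/\partial p_A\rangle$, each $x^A$ is the pullback of a coordinate on $M$ (the leaves being the level sets of the $x^A$), and
\begin{align*}
  g = 2\,\d x^A\odot\d p_A - 2\,\phi_{AB}(x,p)\,\d x^A\odot\d x^B
\end{align*}
for some $\phi_{AB} = \phi_{(AB)}$, with $V^{\circ} = \langle\d x^A\rangle$ and the mixed term normalized to $2\,\d x^A\odot\d p_A$.

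Next, computing the Riemann tensor of this Walker metric and extracting the components with two entries from $V$ identifies \eqref{eq-Weyl-cond-PW} with the vanishing of all second fibre derivatives of $\phi$; hence $\phi_{AB} = \Ga\ind{_A^C_B}(x)\,p_C + \psi_{AB}(x)$ with $\Ga\ind{_A^C_B} = \Ga\ind{_{(A}^C_{B)}}$ and $\psi_{AB} = \psi_{(AB)}$ (symmetry from $\phi_{AB} = \phi_{BA}$). Since $k\in\Gamma(V)$ is tangent to the fibres, $k = k^A(x,p)\,\partial/\partial p_A$, and \eqref{eq-CKf2homothety} gives in particular $\mathcal L_k g = 2\,g$. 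Matching the $\d x^A\odot\d p_C$-components forces $\partial k^A/\partial p_C = 2\,\de^A_C$, so $k^A = 2\,p_A + c^A(x)$, and the fibre translation $p_A\mapsto p_A + \tfrac12 c^A(x)$ (keeping the $x^A$ fixed, preserving the shape of $g$ and the fibre-linearity of $\phi$) brings $k$ to $2\,p_A\,\partial/\partial p_A$. Matching the $\d x^A\odot\d x^B$-components then yields $\psi_{AB} = 0$. Thus $g$ has the form \eqref{pattersonwalker}, and the $\Ga\ind{_A^C_B}$ are the Christoffel symbols of a torsion-free affine connection $D$ on $M$.

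It remains to arrange $\Ga\ind{_A^C_C} = 0$, i.e.\ that $D$ be special. By purity the given $\chi$ is a nowhere-vanishing multiple $f\,\chi_0$ of the spinor $\chi_0$ attached to $V$ by \eqref{eq-for-Josef} in the above coordinates, and a direct computation of the Levi-Civita connection of \eqref{pattersonwalker} gives $\wt D_a\chi_0 = c\,\Ga\ind{_B^C_C}\,\chi_a{}^B\,\chi_0$ for a universal constant $c$; equivalently the recurrence $1$-form of $\chi_0$ is $\pi^*\tau$ with $\tau = c\,\Ga\ind{_A^C_C}\,\d x^A$. Parallelism of $\chi$ forces $\d\log f = -\pi^*\tau$, so $\pi^*\tau$, and hence $\tau$, is exact; therefore $D$ preserves a volume form, and choosing the $x^A$ so as to realize this volume form as $\d x^1\wedge\dots\wedge\d x^n$ (with the $p_A$ changed by the dual transformation, which preserves \eqref{pattersonwalker}) we obtain $\Ga\ind{_A^C_C} = 0$. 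Then $D$ is special and, by the definition of the Riemann extension, $(\wt M, g)$ is exactly that extension.

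The routine parts are the passage to the leaf space with Walker coordinates (a citation) and the Lie-derivative matching for $k$ (elementary). The two points carrying the real content are the identification of \eqref{eq-Weyl-cond-PW} with the fibre-linearity of $\phi$, which requires computing the curvature of a general Walker metric, and the evaluation of the recurrence form of $\chi_0$ for the metric \eqref{pattersonwalker}; I would expect the former to be the main obstacle.
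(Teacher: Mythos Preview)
Your argument is correct, but the route differs from the paper's in one substantive way. The paper invokes the Bryant--Kath normal form for metrics admitting a \emph{parallel pure spinor}, which already delivers the metric in the shape \eqref{eq-parallel_spinor_metric} together with the extra condition $\partial\Theta_{BA}/\partial p_B=0$. This trace condition, once combined with the homogeneity from the homothety and the fibre-linearity from \eqref{eq-Weyl-cond-PW}, immediately forces $\Theta_{AB}=\Gamma\ind{_A^C_B}\,p_C$ with $\Gamma\ind{_A^C_C}=0$; so the specialness of $D$ is obtained for free. You instead start from the weaker Walker normal form (valid for a parallel null \emph{distribution}), reach the shape \eqref{pattersonwalker} with a possibly non-trace-free $\Gamma$, and only then exploit the parallelism of $\chi$ via the recurrence $1$-form of $\chi_0$ to show that $D$ preserves a volume form, followed by a coordinate change on the base. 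Both arguments are valid: yours is more self-contained (Walker's theorem plus a spin-connection computation) while the paper's is shorter by outsourcing the spinorial step to the cited normal form. A minor secondary difference is that the paper pins down $k$ by first deducing that $\mu$ acts as $-\id$ on $V$ (from differentiating $k^a\chi_a^A=0$), whereas you obtain the same conclusion by matching the $\d x^A\odot\d p_B$ components of $\mathcal L_k g=2g$; these are equivalent. The one place your write-up should be fleshed out is the recurrence computation $\wt D_a\chi_0=c\,\Gamma\ind{_B^C_C}\,\chi_a{}^B\,\chi_0$: it is true, but since it is precisely what replaces the Bryant--Kath input it deserves to be written out rather than asserted.
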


\begin{proof}
In a neighborhood of any point of $\widetilde{M}$, there exist coordinates $\{x^A, p_A\}$ such that the metric takes the form \cite{bryant-pseudo,kath-pseudo}
\begin{align}\label{eq-parallel_spinor_metric}
 g & = 2 \, \d p_A \odot \d x^A - 2 \, \Theta \ind{_{AB}} \d x^B \odot \d x^A  \, , 
\end{align}
where the distribution $\Vertical$ is spanned by the vector fields $\parderv{}{p_A}$ and $\{ x^A \}$ are coordinates on the leaf space $M$, and the functions $\Theta \ind{_{AB}} = \Theta \ind{_{(AB)}} (x,p)$ satisfy the differential conditions
\begin{align}\label{eq-xtra-spinor}
\parderv{}{p_B} \Theta \ind{_{BA}} = 0 \, .
\end{align}
Since $k$ is a homothety tangent to $\Vertical$, we can write
\begin{align*}
k & = k_A \parderv{}{p_A} \, , & g(k,-) & = k_A \d x^A \, ,
\end{align*}
for some functions $k_A$. The exterior derivative of this $1$-form is given by
\begin{align*}
\mu  = \parderv{}{x^A} k_B \d x^A \wedge \d x^B + \parderv{}{p_A} k_B \d p_A \wedge \d x^B \, . 
\end{align*}
This gives
\begin{align*}
\mu  \left( \parderv{}{p_A} , - \right) & = \frac{1}{2} \parderv{}{p_A} k_B \d x^B \, .
\end{align*}
Since $\chi$ is parallel, differentiating $k^a \chi_a^A = 0$ yields
\begin{align*}
\mu_{ab} \gamma^b \chi + \gamma_a \chi = 0,
\end{align*} 
according to \eqref{eq-CKf2homothety}.
This means that $\mu$, as an endomorphism of $T\wt{M}$, acts by minus the identity on $V$.
Hence $\frac{1}{2} \parderv{}{p_A} k_B = \delta_A^B$, i.e.\ $k_B = 2 \, p_B + \phi_B$ for some functions $\phi_B$ of $x^A$. We can perform a change of the coordinates $p_A$ to eliminate the functions $\phi_B$ in $k_B$ while preserving the form of the metric. At this stage, we have the following local coordinate forms for the homothety $k^a$, its associated $1$-form $k_a$, and its exterior derivative $\mu_{ab}$:
\begin{align*}
k & = 2 \, p_A \parderv{}{p_A} \, , & g(k,-) & = 2 \, p_A \d x^A  \, \\
\mu & = 2 \, \d p_A \wedge \d x^A   \, .
\end{align*}
Now, $k$ is a homothety satisfying $\mc{L}_k g_{ab} = 2 \, g_{ab}$, and the
equivalent condition on $\Th_{AB}$ is
\begin{align} 
p_C \parderv{}{p_C} \Theta_{AB} & = \Theta_{AB} \, . \label{eq-homogeneous_Theta}
\end{align}
This says that $\Theta \ind{_A_B}$ is homogeneous of degree $1$ in $p_A$.
On the other hand, the curvature condition \eqref{eq-Weyl-cond-PW} is equivalent to
\begin{align}
 \parderv{^2}{p_B \partial p_D} \Theta \ind{_A_C} & = 0 \, , \label{eq-linear_Theta}
\end{align}
which tells us that $\Theta \ind{_A_C}$ is linear in $p_A$.

Putting things together we see that, given the metric \eqref{eq-parallel_spinor_metric}, the conditions \eqref{eq-xtra-spinor}, \eqref{eq-homogeneous_Theta} and \eqref{eq-linear_Theta} are satisfied if and only if
$\Theta_{AB}$ takes the form 
\begin{align}\label{eq-Theta_projective}
 \Theta \ind{_{AB}} = \Gamma \ind{_{A}^C_B} \, p_C \, ,
\end{align}
for some $\Gamma \ind{_{A}^C_B} = \Gamma \ind{_{(A}^C_{B)}} (x)$, 
which is moreover trace-free by virtue of \eqref{eq-xtra-spinor}.

The condition \eqref{eq-Weyl-cond-PW}
is the obstruction for the Levi-Civita connection to descend to an affine connection on $M$, cf. \cite{afifi,derdzinski}. We have therefore recovered the Patterson--Walker metric \eqref{pattersonwalker}, and $\Gamma \ind{_A^C_B}$ can be identified with the Christoffel symbols of a special affine connection $D$ on the leaf space of $\Vertical$.
\end{proof}

\section{Conformal Patterson--Walker metrics}\label{sec-construction}
We now deal with a projective-to-conformal analog of the construction from the previous section.

\subsection{Calculus for projective geometry}
As before, we shall use upper case Roman abstract indices as in \cite{penrose-rindler-84} for tensors on $M$. For instance, $\al_A \in \ce_A$ denotes a $1$-form on $M$, $v^{AB}\in\ce^{[AB]}$ denotes a  bivector on $M$. This convention should not be confused with unprimed spinor indices. By and large, we follow the treatment given in \cite{eastwood-notes,eastwood-notes-conformal, thomass}.

Two torsion-free affine connections $D_A$ and $\widehat{D}_A$ are in a given projective class $\mb{p}$ if and only if for any $\xi^A \in \mc{E}^A$,
\begin{align}\label{eq-Dhat-D}
\widehat D_A\xi^B & =D_A\xi^B+ Q \ind{_{AC}^B} \xi^C \, , & Q \ind{_{AB}^C} = 2 \, \delta_{(A}^C \Upsilon_{B)} \, ,
\end{align}
for some $1$-form $\Upsilon_A$. Similar formulae can be obtained on $1$-forms and tensors by means of the Leibniz rule.

We shall assume $M$ to be oriented. Let us fix a volume form $\varepsilon_{A_1 \ldots A_n} \in \mc{E}_{[A_1 \ldots A_n]}$. Then, by \eqref{eq-Dhat-D},  for any two affine connections $D_A$ and $\widehat{D}_A$ in $\mb{p}$, we have
\begin{align}\label{eq-diff-Deps}
\widehat{D}_A \varepsilon_{B_1 \ldots B_n} & = D_A \varepsilon_{B_1 \ldots B_n} - (n+1) \, \Upsilon_A \varepsilon_{B_1 \ldots B_n} \, ,
\end{align}
for some $1$-form $\Upsilon_A$. We can always choose $\varepsilon^{A_1 \ldots A_n} \in \mc{E}^{[A_1 \ldots A_n]}$ such that $\varepsilon_{A_1 \ldots A_n} \varepsilon^{B_1 \ldots B_n} = n! \delta_{[A_1}^{B_1} \ldots \delta_{A_n]}^{B_n}$. In general, $D_A$ does not preserve $\varepsilon_{A_1 \ldots A_n}$ so that if we set
\begin{align}\label{eq-special-Upsilon}
\Upsilon_A := \frac{1}{(n+1)!} \left( D_A \varepsilon_{B_1 \ldots B_n}  \right) \varepsilon^{B_1 \ldots B_n} \, ,
\end{align}
the connection $\widehat{D}_A$ given by \eqref{eq-Dhat-D} or \eqref{eq-diff-Deps} preserves $\varepsilon_{A_1 \ldots A_n}$. Thus, we can always find a special connection, i.e.\ a connection that preserves a given volume form, in the projective class $\mb{p}$, and such a connection can be shown to be unique, cf.\ \cite{bryant-2009} and \cite{dunajski-2016}.

With no loss of generality, we shall henceforth restrict ourselves to special torsion-free affine connections. These enjoy
nice properties. In particular, if $R\ind{_{AB}^C_D}$ is the curvature tensor of a special torsion-free affine connection $D$ with Ricci tensor $\Ric_{AB} := R\ind{_{PA}^P_B}$, then the \emph{Schouten tensor}
\begin{align*}
  \Rho_{AB} := \frac{1}{n-1} \Ric_{AB} \, ,
\end{align*}
is symmetric. Hence $\Rho$ vanishes if and only if $D$ is Ricci-flat.
The projective Weyl curvature and the Cotton tensor are defined respectively by 
\begin{align}	\label{formulaCprojective}
  W\ind{_{AB}^C_D} =R\ind{_{AB}^C_D} +\Rho_{AD}\de\ind*{^C_{B}}- 
     \Rho_{BD}\de\ind*{^C_{A}}, \quad
  Y_{CAB}=2D_{[A}\Rho_{B]C}. 
\end{align}
The connection $D$ is called \emph{projectively flat} if it is projectively equivalent to a flat affine connection.
For manifolds of dimension $n=2$, the Weyl curvature vanishes identically and the only obstruction to projective flatness is the Cotton tensor $Y$.
For $n\geq 3$ projective flatness is equivalent to the vanishing of the Weyl curvature $W$.

By \eqref{eq-diff-Deps}, any two volume forms $\varepsilon$ and $\hat{\varepsilon}$ related by $\hat{\varepsilon} = \ee^{(n+1) \phi} \varepsilon$ correspond to two special torsion-free affine connections $D$ and $\widehat{D}$ differing by the $1$-form $\Upsilon_A = D_A \phi$. We note that under such a projective change, the Rho tensor transforms according to
\begin{align}
    \widehat \Rho_{AB} & =\Rho_{AB} + \Ups_A \Ups_B - D_A \Ups_B \, ,
\end{align}
so that the Schouten $\widehat{\Rho}_{AB}$ associated to $\widehat{D}_A$ remains symmetric.

We therefore have a special subclass of torsion-free affine connections of $\mb{p}$, projectively related by exact $1$-forms, and thus parametrized by smooth functions on $M$. We can conveniently define the \emph{density bundle of projective weight $w$} as $\mc{E}(w):=\left(\wedge^n T M\right)^{-\frac{w}{n+1}}$ on $M$, where $\dim M=n$. 
We will refer to everywhere positive sections of  $\mc{E}(1)$ as \emph{projective scales}. Any projective scale $\sigma$, say, determines a special torsion-free affine connection $D_A$ in $\mb{p}$, which extends to an affine connection, also denoted $D_A$, on $\mc{E}(w)$, and for which $D_A \sigma =0$. For any two torsion-free affine connections in $\mb{p}$, we have
\begin{align}\label{eq-diff-Ddens}
\widehat D_A f & =D_A f + w \Ups_A f \, , & f \in \mc{E}(w) \, ,
\end{align}
An oriented projective structure determines a distinguished section $\bm{\upvarepsilon}_{A_1 \ldots A_n} \in \mc{E}_{[A_1 \ldots A_n]}(n+1)$, which we shall refer to as the \emph{projective volume form}. Any choice of projective scale $\sigma$ corresponds to a special connection $D$ preserving the volume form $\varepsilon = \sigma^{-(n+1)} \bm{\upvarepsilon}$. Since, for \emph{any} two connections $D$ and $\widehat{D}$ in $\mb{p}$, we have $\widehat{D} \bm{\upvarepsilon} = D \bm{\upvarepsilon}$ by \eqref{eq-diff-Ddens} and \eqref{eq-diff-Deps}, we conclude that $D \bm{\upvarepsilon} = 0$ for any connection $D$ in $\mb{p}$.

\subsection{Calculus for conformal geometry}
As before, we shall use lower case Roman indices for tensors on $\wt{M}$, e.g.\ $g_{ab}\in\wt{\ce}_{(ab)}$ denotes a symmetric $2$-tensor on $\wt{M}$. The reader can refer to \cite{thomass} for more details on conformal geometry and its calculus.

We define the \emph{density bundle of conformal weight $w$} as $\wt{\mc{E}}[w]:=\left(\wedge^{2n} T \wt{M} \right)^{-\frac{w}{2n}}$ on $\wt{M}$, where $\dim\wt{M}=2n$. 
We will refer to everywhere positive sections of $\wt{\mc{E}}[1]$ as \emph{conformal scales}. The Levi-Civita connection extends to an affine connection on $\wt{\mc{E}}[w]$. The conformal structure can be equivalently seen as a density-valued metric $\mb{g}_{ab} \in \wt{\mc{E}}_{(ab)}[2]=\bigodot^2 T^*\wt{M}\otimes\wt{\mc{E}}[2]$ referred to as the \emph{conformal metric} on $\wt{M}$. Any conformal scale $\tau \in \wt{\mc{E}}[1]$ determines a metric $g_{ab} = \tau^{-2} \mb{g}_{ab}$ in $\mb{c}$. The associated Levi-Civita connection $\wt{D}_a$ preserves $g_{ab}$, $\mb{g}_{ab}$ and $\tau$. The conformal metric allows us to identify $T \wt{M}$ with $T^* \wt{M}[2]$. Similarly, one can identify $\wt{S}_\pm$ with $\wt{S}^*_\pm[1]$ when $n$ is even, and with $\wt{S}^*_\mp[1]$ when $n$ is odd, by means of weighted spin bilinear forms.

For a (pseudo-)Riemannian metric $g$, the Schouten tensor $\wt{\Rho}$ is given by 
\begin{align*}
  \wt{\Rho}_{ab} =\frac{1}{2n-2}\left(\wt{\mr{Ric}}_{ab} -\frac{\wt{\mr{Sc}}}{2(2n-1)}\,g_{ab} \right),
\end{align*}
where $\wt{\mr{Ric}}$ and $\wt{\mr{Sc}}$ are the Ricci and scalar curvature of $g$ respectively.
Since $\wt{\Rho}$ is a trace modification of $\wt{\mr{Ric}}$, the Schouten tensor vanishes if and only if $g$ is Ricci-flat. 
The conformal Weyl curvature and the Cotton tensors of $g$ are defined respectively by 
\begin{align*}
  \wt{W} \ind{_{ab}^c_d}  & =\wt{R} \ind{_{ab}^c_d} - 2 \, \de_{[a}^c\wt{\Rho}_{b]d}+2g_{d[a}\wt{\Rho}_{b]}{}^c \, , &
  \wt{Y}_{cab} & = 2\wt{D}_{[a} \wt{\Rho}_{b]c}.
\end{align*}
The metric $g$ is called \emph{conformally flat} if it can be (locally) rescaled to a flat metric.
For manifolds of dimension $2n\geq 4$ conformal flatness is equivalent to the vanishing of the Weyl curvature $\wt{W}$. The transformation rules for Levi-Civita connections and Schouten tensors under conformal changes can be given explicitly, see e.g.\ \cite{thomass}.

\subsection{Conformal extensions of projective structures}
The Riemann extension of an affine connected space can be adapted to
weighted cotangent bundles 
$
T^*M (w) =T^*M\otimes\mc{E}(w) $.
The only difference in the weighted case is that a choice of torsion-free affine connection
$D$ gives rise to a weighted metric.
This means that the natural pairing between $H\cong TM$ and
$V\cong T^*M(w)$ defines a symmetric bilinear form on the tangent bundle of $T^*M (w) $ with values in
$\pi^*\mc E(w)$, the pull-back of the line bundle over $M$ with respect to the natural projection
$\pi: T^*M (w) \to M$. A special connection $D$ yields a trivialization of $\mc{E}(w)$, and thus the pairing can be regarded as $\mbb{R}$-valued. In particular, $D$ defines a Patterson--Walker metric on $T^*M (w) $.

 We shall denote by $\bm{\uptheta}$ the (weighted) tautological $1$-form on $T^*M (w) $.  This bundle is trivialized by any choice of projective scales. Let $\sigma$ and $\hat{\sigma}$ be two such scales related by $\hat{\sigma} = \ee^{-\phi} \sigma$ for some smooth function $\phi$. Then, $\theta : = \sigma^{-w} \bm{\uptheta}$ and $\hat{\theta} : = \hat{\sigma}^{-w} \bm{\uptheta}$ are two (tautological) $1$-forms related by $\hat{\theta} = \ee^{w \phi} \theta$. In both cases, there exists canonical coordinates $\{ x^A , p_A\}$ and $\{ x^A , \hat{p}_A \}$ in which $\theta = p_A \d x^A$ and 
 $\hat{\theta} =  \hat{p}_A \d x^A$. Thus, a projective change induces the change of canonical fiber coordinates $p_A \mapsto \hat p_A= \ee^{w \phi}p_A$.

Let  $D_A$ and $\widehat{D}_A \in \mb{p}$ be the special affine connections in $\mb{p}$ associated to $\sigma$ and $\hat{\sigma}$ respectively, so that $\widehat{D}_A$ differs from $D_A$ via \eqref{eq-Dhat-D} by $\Upsilon_A = D_A \phi$.  This means that the Christoffel symbols of $D_A$ and $\widehat{D}_A$ are related by
\begin{align*}
    \widehat\Ga \ind{_A^C_B} & = \Ga \ind{_A^C_B} +\de_A^C\Ups_B+\de_B^C\Ups_A \, .
\end{align*}
A straightforward computation then gives
\begin{multline}\label{eq-Hhat-H}
\d \hat{p}_A - \widehat{\Gamma} \ind{_A^C_B} \hat{p}_C \d x^B = \ee^{w \phi} \left( \d p_A - \Gamma \ind{_A^C_B} p_C \d x^B \right)  \\
+ \ee^{w \phi} \left( (w - 1) p_A \Upsilon_B - p_B \Upsilon_A \right) \d x^B \, ,
\end{multline}
so that using \eqref{pattersonwalker} yields 
\begin{align*}
    \hat g= \ee^{w \phi}(g+2\,(w-2)p_B\Ups_A\d x^B\odot\d x^A) \, ,
\end{align*}
As a consequence, we immediately conclude:

\begin{prop}\label{prop-weight}
  Let $D$ and $\widehat{D}$ be projectively equivalent special torsion-free affine connections
  on $M$ and let $g$ and
  $\hat g$ be the associated Patterson--Walker metrics on $T^*M (w) $.
  Then $g$ and $\hat g$ are conformally equivalent if and only if $w=2$.
\end{prop}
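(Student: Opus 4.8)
The plan is to read off the result directly from the displayed computation immediately preceding the statement. The key identity is the transformation formula
\begin{align*}
    \hat g= \ee^{w \phi}\bigl(g+2\,(w-2)\,p_B\Ups_A\,\d x^B\odot\d x^A\bigr) \, ,
\end{align*}
which was obtained from \eqref{eq-Hhat-H} by substituting into the coordinate expression \eqref{pattersonwalker} for the Patterson--Walker metric. The ``if'' direction is then immediate: when $w=2$ the correction term $2(w-2)p_B\Ups_A\,\d x^B\odot\d x^A$ vanishes identically, so $\hat g=\ee^{2\phi}g$, i.e.\ $g$ and $\hat g$ are conformally equivalent (with conformal factor $\ee^{w\phi}=\ee^{2\phi}$).

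For the ``only if'' direction, suppose $w\neq 2$ and argue by contradiction. If $g$ and $\hat g$ were conformally equivalent, then since $\hat g=\ee^{w\phi}g + \ee^{w\phi}\,2(w-2)\,p_B\Ups_A\,\d x^B\odot\d x^A$ and the leading term is already a nonzero multiple of $g$, conformal equivalence would force the extra symmetric $2$-tensor $2(w-2)\,p_B\Ups_A\,\d x^B\odot\d x^A$ to be a pointwise multiple of $g$ as well. But this tensor is supported on $\mathrm{span}\{\d x^A\}$, i.e.\ it annihilates the vertical distribution $V=\langle\partial/\partial p_A\rangle$ in both slots, whereas $g$ restricted to $H\times H$ is the term $-2\Gamma\ind{_A^C_B}p_C\,\d x^A\odot\d x^B$ and $g$ pairs $V$ with $H$ nontrivially via $2\,\d x^A\odot\d p_A$. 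Hence $g(\,\partial/\partial p_A,\,\cdot\,)\neq 0$ while the correction term vanishes on $\partial/\partial p_A$, so the correction term cannot be a nonzero multiple of $g$; it must therefore vanish. For a generic (equivalently, non-constant) $\phi$ the $1$-form $\Ups_A=D_A\phi$ is nonzero on an open set, and the fiber coordinates $p_B$ are not all zero there, so $(w-2)\,p_B\Ups_A\,\d x^B\odot\d x^A\not\equiv 0$ unless $w=2$.

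I expect the only genuinely delicate point to be making the ``only if'' direction airtight: one must be careful that the statement is about the \emph{pair} of associated Patterson--Walker metrics for \emph{every} pair of projectively equivalent special connections, so it suffices to exhibit a single projective change $\phi$ (equivalently a single $\Ups_A\not\equiv 0$) for which the two metrics fail to be conformal when $w\neq 2$, which the argument above does. It is worth noting explicitly that the correction tensor is not of the form $(\text{function})\cdot g$, rather than merely observing that it is nonzero, since two metrics differing by a conformal factor that is \emph{not} of the form $\ee^{w\phi}$ times a constant would also be ``conformally equivalent'' in the weak sense; the decomposition of $T\wt M$ into $V\oplus H$ with $g$ isotropic on each summand and nondegenerate on the pairing is exactly what rules this out. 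Everything else is a direct appeal to the already-displayed formula \eqref{eq-Hhat-H} and the coordinate form \eqref{pattersonwalker}, so no further computation is required.
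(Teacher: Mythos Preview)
Your proposal is correct and follows the paper's approach exactly: the paper derives the transformation formula $\hat g = \ee^{w\phi}\bigl(g + 2(w-2)p_B\Ups_A\,\d x^B\odot\d x^A\bigr)$ in the text immediately preceding the proposition and then states the result as an immediate consequence, without spelling out the ``only if'' direction. Your treatment of that direction---observing that the correction term annihilates the vertical distribution while $g$ does not, hence cannot be a nonzero multiple of $g$---is a correct elaboration of what the paper leaves implicit.
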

Setting $\wt{M}:=T^*M(2)$ we have thus obtained the notion of the {conformal extension} $(\wt{M},\mb{c})$ of a projective structure $(M,\mb{p})$:
\begin{defn}\label{def-cPW}
  The  \emph{conformal extension} or the \emph{conformal Patterson--Walker metric}
  associated to an oriented projective structure $\mb{p}$ on $M$ is the split-signature conformal structure $\mb{c}$ on $\wt{M}=T^* M(2)$ represented by the Patterson--Walker metric of a special torsion-free affine connection $D\in\mb{p}$.
\end{defn}

\begin{rema}\label{Th-proj-inv}
A slightly different construction, which was first introduced in \cite{dunajski-tod} when $n=2$, involves the so-called \emph{Thomas projective parameters}.  In dimension $n$, these are defined by \cite{eisenhart,thomas-TY-1934} 
\begin{align}\label{eq-Th-proj-inv}
  \Pi \ind{_A^C_B} & := \Gamma \ind{_A^C_B} - \frac{2}{n+1} \delta_{(A}^C \Gamma \ind{_{B)}^D_D} \, ,
\end{align}
where $\Gamma \ind{_A^C_B}$ are the Christoffel symbols of any affine connection in $\mb{p}$ with respect to some coordinate system $\{ x^A \}$. In fact, the $\Pi \ind{_A^C_B}$ do not depend on the choice of connection in $\mb{p}$, and are thus a set of projectively invariant functions. However, the $\Pi \ind{_A^C_B}$ depend on the choice of coordinates $\{ x^A \}$ in the sense that they do not transform as Christoffel symbols, let alone as a tensor in general. Consider a general coordinate transformation $x^A \mapsto y^A$ on $M$ with Jacobian $J^A_B := \parderv{y^A}{x^B}$, and set $\phi := \frac{1}{n+1} \log \left( \det J^A_B \right)$. Then, we have \cite{eisenhart} 
\begin{align}\label{eq-Pi-Pi'}
\Pi \ind{_A^D_B} J_D^C & = {\Pi'} \ind{_D^C_E} J_A^D J_B^E + \parderv{}{x^A} J_B^C - 2 J_{(A}^C \parderv{\phi }{x^{B)}}  \, ,
\end{align}
where ${\Pi'} \ind{_A^C_B}$ are the Thomas projective parameters defined by the Christoffel symbols with respect to $\{y^A\}$.

The coordinate systems $\{ x^A \}$ and $\{ y^A \}$ define volume forms $\varepsilon := \d x^1 \wedge \ldots \wedge \d x^n$ and $\hat{\varepsilon} := \d y^1 \wedge \ldots \wedge \d y^n$, respectively, preserved by special connections $D_A$ and $\widehat{D}_A$ in $\mb{p}$ respectively. These are projectively related by $\Upsilon_A = D_A \phi$. We therefore have an induced change of canonical fiber coordinates on $T^*M(w)$ given by $p_A \mapsto q_A := \ee^{w \phi} p_B (J^{-1})^B_A$. Define two metrics on the open subset of $T^*M(w) $ over the overlap of the charts of $\{ x^A \}$ and $\{ y^A \}$ by
\begin{align}\label{dunajskitod}
g & :=  2 \, \d x^A \odot \d p_A -2 \, \Pi \ind{_A^C_B} \, p_C  \, \d x^A \odot \d x^B \, , \\
\hat{g} & :=  2 \, \d y^A \odot \d q_A -2 \, {\Pi'} \ind{_A^C_B} \, q_C  \, \d y^A \odot \d y^B \, . \nonumber
\end{align}
Then,  using \eqref{eq-Pi-Pi'}, one can immediately check that
\begin{align*}
\hat{g} = \ee^{w \phi} \left( g + 2 (w-2) q_A \parderv{\phi}{y^B} \d y^A \odot \d y^B \right) \, .
\end{align*}
In particular, $g$ and $\hat g$ are conformally equivalent if and only if $w=2$. We have therefore constructed a conformal class  of metrics of the form \eqref{dunajskitod} on $\wt{M}=T^* M(2)$ from the projective class $\mb{p}$ on $M$: a metric in the conformal class corresponds to the Thomas projective parameters representing $\mb{p}$ in a given coordinate system $\{ x^A \}$, up to coordinate transformations that preserve the volume form $\d x^1 \wedge \ldots \wedge \d x^n$. Different Thomas projective parameters for different coordinate systems yield conformally related metrics.

Finally, with no loss, we can take $\Pi \ind{_A^C_B} = \Gamma \ind{_A^C_B}$ in the definition \eqref{eq-Th-proj-inv}, where $\Gamma \ind{_A^C_B}$ are the Christoffel symbols of the special connection $D_A$ preserving the volume form $\varepsilon$, up to constant multiple, on $(M , \mb{p})$. In this case, the metric \eqref{dunajskitod} can be identified with the Patterson--Walker metric \eqref{pattersonwalker}. As this identification holds for any choice of coordinate system, the conformal class of metrics of the form \eqref{dunajskitod} on $\widetilde{M}$ is none other than the conformal Patterson--Walker metric of Definition \ref{def-cPW}.
\end{rema}

To deal with the conformal class of Patterson--Walker metrics of Definition \ref{def-cPW}, rather than a metric, we shall henceforth view the quantities introduced in section \ref{sec-Riem-ext} as being weighted. In particular, $\gamma \ind{_a^A_{B'}}$ and $\gamma \ind{_a^{A'}_B}$ have conformal weight $1$, $\wt{M}$ being endowed with a conformal spin structure, see also \cite{hsstz-fefferman}. By definition, the conformal Killing field $k^a$ has weight $0$, so that the $1$-form $k_a$ is twice the weighted tautological $1$-form $\bm{\uptheta}_a$ on $\wt{M}$, i.e. $k_a  = 2 \, \bm{\uptheta}_a \in \wt{\mc{E}}_a[2]$. Next, requiring that the spinor $\chi^{A'}$ remain parallel with respect to the Levi-Civita connection of any Patterson--Walker metric, restricts its possible conformal weight. Following the conventions of \cite{penrose-rindler-84,Taghavi-Chabert2012a}, and for convenience, $\chi^{A'}$ will have weight $0$, from which it follows that $\check{\eta}_{A'}$ has weight $0$.

\begin{lem}\label{lemarkable}
Any projective scale $\sigma \in \mc{E}(1)$ lifts to a conformal scale $\wt{\sigma} \in \wt{\mc{E}}[1]$, and thus by extension any section of $\mc{E}(w)$ lifts to a section of $\wt{\mc{E}}[w]$.
Conversely, any section $\wt{\si}$ of $\wt{\mc{E}}[w]$ such that $\chi^{aA}\wt{D}_a \wt{\si}=0$, with respect to any Patterson--Walker metric in $\mb{c}$, descends to a section of $\mc{E}(w)$.

Further, any section $\sigma _{A_1 \ldots A_k}^{B_1 \ldots B_\ell} \in \mc{E}_{A_1 \ldots A_k}^{B_1 \ldots B_\ell}(w)$ gives rise to a section of $\wt{\mc{E}}_{A_1 \ldots A_k}^{B_1 \ldots B_\ell} [w-k+\ell]$. For contravariant tensors, the lifts depend on the choice of special torsion-free affine connection on $\mathbf{p}$.
\end{lem}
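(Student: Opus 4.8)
The plan is to reduce all three assertions to a single canonical isomorphism of density bundles, $\wt{\mc{E}}[w]\cong\pi^*\mc{E}(w)$, together with the coordinate identity $\chi^{aA}\wt{D}_a=\partial/\partial p_A$. For the density statement, start from the observation that a projective scale $\sigma\in\mc{E}(1)$ picks out the unique special connection $D\in\mb{p}$ with $D\sigma=0$, hence a Patterson--Walker metric $g_\sigma$ representing $\mb{c}$ on $\wt{M}=T^*M(2)$, hence a conformal scale $\tau\in\wt{\mc{E}}[1]$ characterized by $g_\sigma=\tau^{-2}\mb{g}$. The key point is that $\sigma\mapsto\tau$ is equivariant for multiplication by positive functions on $M$: replacing $\sigma$ by $\ee^{f}\sigma$ preserves the unparametrized geodesics, and, the cotangent twist having been tuned to $w=2$, it changes $g_\sigma$ only by a conformal factor --- this is exactly Proposition~\ref{prop-weight}, whose proof displays the inhomogeneous term in the transformation \eqref{eq-Hhat-H} of the metric as vanishing precisely at $w=2$. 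Consequently $\sigma\mapsto\tau$ is the restriction to positive sections of a bundle isomorphism $\pi^*\mc{E}(1)\isom\wt{\mc{E}}[1]$ over $\pi$; taking tensor powers gives $\pi^*\mc{E}(w)\isom\wt{\mc{E}}[w]$ for every $w$, and composing the pullback $\pi^*$ with this isomorphism lifts sections of $\mc{E}(w)$ to sections of $\wt{\mc{E}}[w]$. (The same isomorphism, with its normalization, can also be read off directly by taking top exterior powers in $0\to\Vertical\to T\wt{M}\to\pi^*TM\to0$ together with $\Vertical\cong\pi^*(T^*M\otimes\mc{E}(2))$ and the canonical triviality of $\wedge^{n}T^*M\otimes\wedge^{n}TM$.)

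For the converse, fix any special $D\in\mb{p}$ with projective scale $\sigma$, pass to adapted coordinates $\{x^A,p_A\}$ as in \eqref{pattersonwalker}--\eqref{eq-for-Josef}, and let $\tau\in\wt{\mc{E}}[1]$ be the conformal scale of $g_\sigma$, which by the previous paragraph is the lift of $\sigma$. Since the Levi-Civita connection of $g_\sigma$ preserves $\tau$ and $\chi^{aA}\wt{D}_a=\partial/\partial p_A$ by \eqref{eq-for-Josef}, we have $\chi^{aA}\wt{D}_a\tau=0$. Writing an arbitrary $\wt{\si}\in\Gamma(\wt{\mc{E}}[w])$ as $\wt{\si}=F\,\tau^{w}$ with $F$ a function on $\wt{M}$, the hypothesis $\chi^{aA}\wt{D}_a\wt{\si}=0$ becomes $\partial F/\partial p_A=0$, so $F=\pi^*F_0$ for some function $F_0$ on $M$; since $\tau^{w}$ is the lift of $\sigma^{w}$, $\wt{\si}$ is then the lift of $F_0\,\sigma^{w}$, i.e.\ $\wt{\si}$ descends to $F_0\,\sigma^{w}\in\Gamma(\mc{E}(w))$. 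Independence of the chosen $D$ (and of the chosen Patterson--Walker representative) follows from the naturality of $\pi^*\mc{E}(w)\isom\wt{\mc{E}}[w]$, equivalently from the fact that $\Vertical$ is intrinsic to $\pi$.

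For the tensor-valued statement I would combine the above with the projectors \eqref{xinu}. The $n$-dimensional bundle $\im\chi_a^A$ is canonically $T\wt{M}/\Vertical\cong\pi^*TM$, and tracking the conformal weights of $\ga\ind{_a^A_{B'}}$ (weight $1$) and of $\chi^{A'}$ (weight $0$) identifies it, as a weighted bundle, with $\pi^*\mc{E}^A$ twisted up by one unit of conformal density; dually, $\im\check{\eta}_{aA}$ is $\pi^*\mc{E}_A$ twisted down by one unit. Tensoring $\ell$ copies of the former with $k$ copies of the latter and with $\pi^*\mc{E}(w)\isom\wt{\mc{E}}[w]$ produces exactly $\wt{\mc{E}}_{A_1\ldots A_k}^{B_1\ldots B_\ell}[w-k+\ell]$, into which the pullback of a section of $\mc{E}_{A_1\ldots A_k}^{B_1\ldots B_\ell}(w)$ maps. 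On covariant slots this pullback is canonical; a contravariant slot, however, first requires lifting $TM$ horizontally into $T\wt{M}$, i.e.\ choosing the horizontal distribution $H$ --- equivalently the special connection --- which accounts for the asserted dependence. Naturality of all the identifications makes the construction independent of the coordinate chart.

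The step I expect to be genuinely delicate is the weight bookkeeping in the last paragraph: keeping the projective weight on $M$ and the conformal weight on $\wt{M}$ consistently normalized, and distinguishing which identifications are canonical from those that are canonical only after a choice of special connection. The conceptual content is confined to Proposition~\ref{prop-weight} (the tuning $w=2$) and to the coordinate identity $\chi^{aA}\wt{D}_a=\partial/\partial p_A$ of \eqref{eq-for-Josef}.
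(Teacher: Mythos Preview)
Your proposal is correct and follows essentially the same route as the paper: both rely on Proposition~\ref{prop-weight} to identify projective scales with conformal scales, on the coordinate identity $\chi^{aA}\wt{D}_a=\partial/\partial p_A$ from \eqref{eq-for-Josef} to handle the converse, and on weight-tracking through the projectors $\chi_a^A$, $\check{\eta}^a_A$ for the tensor-valued part. The only cosmetic difference is that the paper verifies the density identification by an explicit volume-form computation (writing $\wt{\varepsilon}$ in terms of $\varepsilon_{A_1\ldots A_n}$ and checking its transformation under $\hat p_A=\ee^{2\phi}p_A$), whereas you argue more abstractly via equivariance of $\sigma\mapsto\tau$ and the exact sequence $0\to\Vertical\to T\wt{M}\to\pi^*TM\to0$; your version is arguably cleaner, but the content is the same.
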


\begin{proof}
Proposition \ref{prop-weight} assigns to a special affine connection on $M$, i.e.\ a section of  $\sigma \in \mc{E}(1)$, a Patterson--Walker metric on $\wt{M}$, i.e.\ a section of  $\wt{\sigma} \in \wt{\mc{E}}[1]$. 
This can also be verified by noting that the volume form on $\wt{M}$ induced by $g_{ab} = \wt{\sigma}^{-2} \mb{g}_{ab}$ takes the form
\begin{align*}
\wt{\varepsilon} & = \left( \varepsilon_{A_1 \ldots A_n} \d x^{A_1} \ldots \d x^{A_1} \right) \wedge \left( \varepsilon^{B_1 \ldots B_n} \d p_{B_1} \ldots \d p_{B_n} \right) \, .
\end{align*}
where $\varepsilon_{A_1 \ldots A_n}$ is the volume form determined by $\sigma$, and $\varepsilon^{A_1 \ldots A_n}$ its inverse. Since a special projective change induces a change $\hat{p}_A = \ee^{2 \phi} p_A$ for some function $\phi$, the volume form $\wt{\varepsilon}$ transforms to $\widehat{\wt{\varepsilon}} = \ee^{2n \phi} \wt{\varepsilon}$ as expected. 
The converse statement follows from the fact that the vectors $\chi^{aA}\wt{D}_a$,  for any Patterson--Walker metric in $\mb{c}$, span the vertical distribution.

According to our conventions, we obtain weighted projectors and injectors 
$\chi_a^A \in \wt{\mc{E}}_a^A[1]$ and $\check{\eta}^a_A \in \wt{\mc{E}}^a_A[-1]$. 
Now choosing an affine connection $D \in \mb{p}$, any section $v^A \in \mc{E}^A(w)$ can be canonically lifted 
$\wt{v}^a \in \wt{\mc{E}}^a[w]$. This means in particular that as a spinor field, $v^A = \wt{v}^a \chi_a^A$ gives rise
to a section of $\wt{\mc{E}}^A[w+1]$. Similarly (but independently of the choice of $D \in \mb{p}$), any section 
$\alpha_A \in \mc{E}_A(w)$ gives rise to a section of $\wt{\mc{E}}_A[w-1]$. This generalizes to tensor fields of higher valence.
\end{proof}

\smallskip
Now let $D$ be a special torsion-free affine connection on $M$ and $g$ its Patterson--Walker metric on $\wt{M}$. We can decompose \eqref{eq-Riem2AffCurv} further so as to express the
conformal  Weyl, Schouten and Cotton tensors $\widetilde{W} \ind{_{abcd}}, \widetilde{\Rho} \ind{_{ab}}, \widetilde{Y} \ind{_{cab}}$ of the Patterson--Walker metric $g$ in terms of the projective Weyl, Schouten respectively Cotton tensors $W \ind{_{AB}^D_C}$, $\Rho _{AB}$ and $Y _{ABC}$:
 \begin{align}
 \widetilde{W} \ind{_{abcd}} & = 2 \left( {\chi} \ind*{_a^A} {\chi} \ind*{_b^B} {\chi} \ind*{_{[c}^C} {\check{\eta}} \ind*{_{d]}_D}  + {\chi} \ind*{_c^A} {\chi} \ind*{_d^B} {\chi} \ind*{_{[a}^C} {\check{\eta}} \ind*{_{b]}_D} \right) W \ind{_{AB}^D_C} \nonumber \\
 & \qquad \qquad \qquad \qquad + 2 \, {\chi} \ind*{_{[a}^A} {\chi} \ind*{_{b]}^B} {\chi} \ind*{_{[c}^C} {\chi} \ind*{_{d]}^D} \left( D \ind{_A} W \ind{_{CD}^E_B} p_E + p \ind{_C} Y \ind{_{DAB}} \right) \, , \label{eq-Walker-Weyl}\\
\widetilde{\Rho} \ind{_{ab}} & = {\chi} \ind*{_a^A} {\chi} \ind*{_b^B} \Rho \ind{_{AB}}  \, , \label{eq-Walker-Rho} \\
 \widetilde{Y} \ind{_{cab}} & = {\chi} \ind*{_c^C} {\chi} \ind*{_a^A} {\chi} \ind*{_b^B} Y \ind{_{CAB}}. \label{eq-Walker-Cotton}
\end{align}

\begin{rema}
By direct inspection, we find:
\begin{enumerate}[(a)]
\item
By \eqref{eq-Walker-Weyl}, the induced Patterson--Walker metric is conformally flat if and only if the original affine connection is projectively flat.
\item
By \eqref{eq-Walker-Rho}, the induced Patterson--Walker metric is Ricci-flat if and only if the original affine connection is Ricci-flat.
\item
By \eqref{pattersonwalker} and \eqref{eq-Walker-Rho}, a Patterson--Walker metric is Einstein if and only if it is already Ricci-flat.
\end{enumerate}
\end{rema}

\begin{rema}
In contrast with the projective-to-conformal construction described above, the authors of \cite{dunajski-mettler-projective} canonically associate to a projective structure a split-signature \emph{Einstein metric} with non-zero scalar curvature.
\end{rema}

\section{Characterization of conformal Patterson--Walker metrics}\label{sec-char}
We shall now prove our characterization Theorem \ref{thm-char} which exactly specifies those split-signature conformal spin structures that are associated to a projective structure via the conformal extension in the sense of definition \ref{def-cPW}. For this purpose we start by collecting properties of the induced conformal structures:
\begin{prop}\label{prop-conf-extension}
The conformal extension $(\wt{M},\mb{c})$ associated to an oriented projective structure $(M,\mb{p})$ satisfies all the properties (a)--(d) of Theorem \ref{thm-char}.
\end{prop}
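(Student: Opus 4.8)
The plan is to verify, one by one, that the conformal extension $(\wt M,\mb c)$ of $(M,\mb p)$ possesses the four features (a)--(d) of Theorem~\ref{thm-char}, using the explicit formulae already assembled in Sections~\ref{sec-Riem-ext} and~\ref{sec-construction}. Throughout I work with a representative Patterson--Walker metric $g$ coming from a special torsion-free affine connection $D\in\mb p$, and recall that $\chi^{A'}$ is $g$-parallel of conformal weight $0$, that $V=\ker\chi_a^A$ is integrable by construction (it is the vertical distribution, whose leaves are the fibres of $\pi:T^*M(2)\to M$), and that $k^a$ is the Euler-type vector field~\eqref{eq-Special-K}. The point is that all four properties are \emph{conformally invariant} statements, so although $g$ is only a representative of $\mb c$, checking them for $g$ suffices.

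First, for (a): since $\chi^{A'}$ is parallel for the Levi-Civita connection $\wt D_a$ of $g$, we have $\wt D_a\chi^{A'}=0$, hence in particular $\dirac\chi=\ga^c\wt D_c\chi=0$ and the twistor equation~\eqref{eq-twistor} holds trivially. (One should note that the twistor spinor equation is conformally invariant, so this establishes (a) for the whole class $\mb c$; the pure spinor $\chi$ and its maximally isotropic integrable kernel were built into the construction.) For (b): the vector field $k$ of~\eqref{eq-Special-K} satisfies the conformal Killing equation by~\eqref{eq-CKf2homothety}, it is light-like because $k\in\Gamma(V)$ and $V$ is $g$-isotropic, and $k\in\ker\chi$ precisely because $k$ is vertical, i.e.\ $k^a\chi_a^A=0$. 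For (c): I would compute the Lie derivative $\mathcal L_k\chi$ directly in the coordinates $(x^A,p_A)$. From $\chi^{aA}\wt D_a=\partial/\partial p_A$ in~\eqref{eq-for-Josef}, the spinor $\chi^{A'}$ is, up to scale, built from the coframe $\d x^A$ (it is the ``$H$-volume'' pure spinor), so $k=2p_A\,\partial/\partial p_A$ acts on it by a scaling determined by the weight count: $\chi^{A'}$ has conformal weight $0$ but as a section of $\wt S_+$ over the $p$-fibres it picks up homogeneity; matching against~\eqref{eq-Liechi}, the factor $-\tfrac12(n+1)$ reflects that $\chi$ corresponds to $\bm{\upvarepsilon}^{-1/2}$-type data of projective weight making it homogeneous of degree $-(n+1)/2$ in $p$. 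Concretely I would use $\mathcal L_k\chi=\nabla_k\chi+\tfrac14(\d k)_{ab}\ga^{ab}\chi$ together with $\nabla_k\chi=0$ (parallel) and $(\d k)_{ab}=2\mu_{ab}$, and then evaluate $\tfrac14\mu_{ab}\ga^{ab}\chi$ using~\eqref{eq-mu-eigen}: since $\mu$ acts as $-\Id$ on $V=\ker\chi$ and $+\Id$ on $H$, the Clifford element $\tfrac14\mu_{ab}\ga^{ab}$ acts on the pure spinor $\chi$ (which ``is'' the top exterior power of the annihilator of $H$) as multiplication by $-\tfrac12(n+1)$; this is the standard computation of the action of a bivector that is $\pm\Id$ on a polarization on the associated pure spinor, and it yields~\eqref{eq-Liechi}. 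Finally, for (d): the curvature condition~\eqref{eq-weylcond} for $v^a,w^a\in\ker\chi=V$ is read off immediately from the explicit Weyl tensor formula~\eqref{eq-Walker-Weyl}: every term there carries at least one factor $\chi_a^A$ contracted into an index that would be hit by $v$ or $w$, and $\chi_a^A v^a=0$ for $v\in\ker\chi$; inspecting the index positions in~\eqref{eq-Walker-Weyl} shows that contracting the first index $a$ with $v^a\in V$ and the last index $d$ with $w^d\in V$ annihilates both groups of terms (in the first group the $\check\eta_{d]D}$ factor contracted with $w\in V$ need care, but $\check\eta_{aA}w^a$ is \emph{not} zero for $w\in V$ — so instead I use that the other index of that antisymmetrized pair is a $\chi$, and that $v^a\chi_a=0$ kills the partner; a clean way is to note~\eqref{eq-Weyl-cond-PW}, $\wt R_{abcd}v^aw^d=0$ for $v,w\in\Gamma(V)$, together with~\eqref{eq-Walker-Rho}, which gives $\wt\Rho_{ab}v^a=\chi_a^Av^a\cdot(\ldots)=0$, and then the Weyl tensor is $\wt W=\wt R - (\text{Schouten terms})$, each Schouten term vanishing when contracted with two vertical vectors).

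I expect the only genuinely nontrivial step to be (c): pinning down the precise constant in $\mathcal L_k\chi=-\tfrac12(n+1)\chi$. The inequality-free part (that $\mathcal L_k\chi$ is a function times $\chi$) is immediate because $k$ is a conformal Killing field and $\chi$ spans the relevant spinor line along $V$, but getting the coefficient right requires either (i) the explicit homogeneity bookkeeping of $\chi$ as a density-valued spinor under the fibrewise scaling $p\mapsto e^{2\phi}p$ combined with its conformal weight $0$, or (ii) the Clifford-algebra computation of $\tfrac14\mu_{ab}\ga^{ab}$ acting on a pure spinor associated to an $n$-dimensional polarization in a $2n$-dimensional split-signature space. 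Both routes are short but must be done carefully with the sign and normalization conventions fixed in~\eqref{xinu}, \eqref{eq-for-Josef}, \eqref{eq-CKf2homothety} and~\eqref{eq-mu-eigen}; I would present route (ii) as it is coordinate-free and makes the dependence on $n$ transparent. Properties (a), (b), (d) are then essentially immediate consequences of the formulae~\eqref{eq-twistor} (trivially satisfied by a parallel spinor), \eqref{eq-CKf2homothety}, and~\eqref{eq-Walker-Weyl}--\eqref{eq-Walker-Rho} respectively.
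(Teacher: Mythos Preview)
Your approach for (a), (b), and (d) matches the paper's almost exactly; the paper dispatches these in one line each using parallelism of $\chi$, \eqref{eq-CKf2homothety}, and \eqref{eq-Walker-Weyl}. Your alternative route for (d) via \eqref{eq-Weyl-cond-PW} together with \eqref{eq-Walker-Rho} (so that the Schouten correction terms vanish on vertical vectors) is also valid and arguably more transparent than a direct index inspection of \eqref{eq-Walker-Weyl}.

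For (c), however, your Lie derivative formula is incomplete, and this is a genuine gap. The correct formula for the Lie derivative of a spinor along a conformal Killing field, which the paper records as \eqref{liederivative}, is
\[
\mathcal L_k\chi = k^a\wt D_a\chi - \tfrac{1}{4}(\wt D_{[a}k_{b]})\gamma^a\gamma^b\chi - \tfrac{1}{4n}(\wt D_p k^p)\chi,
\]
and the divergence term is essential here. With $\wt D_a\chi=0$, $\wt D_ak_b=\mu_{ab}+g_{ab}$ from \eqref{eq-CKf2homothety}, and the relation $\mu_{ab}\gamma^b\chi=-\gamma_a\chi$ (obtained by differentiating $k^a\chi_a^A=0$; this is \eqref{eq-mu-eigen} in spinor form), one gets $\mu_{ab}\gamma^a\gamma^b\chi=2n\,\chi$, so the bivector term contributes $-\tfrac{n}{2}\chi$, not $-\tfrac{n+1}{2}\chi$. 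The missing $-\tfrac{1}{2}\chi$ comes precisely from the divergence term, since $\wt D_pk^p=2n$. Your formula $\mathcal L_k\chi=\nabla_k\chi+\tfrac14(\d k)_{ab}\gamma^{ab}\chi$ omits this piece (and has a sign discrepancy), and your claim that the Clifford element $\tfrac14\mu_{ab}\gamma^{ab}$ alone acts on $\chi$ as multiplication by $-\tfrac12(n+1)$ is incorrect in the conventions fixed by \eqref{xinu}--\eqref{eq-mu-eigen}. The fix is simply to use the full formula \eqref{liederivative}; once you do, the rest of your argument for (c) goes through exactly as the paper's does.
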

\begin{proof}
  Since $\chi$ is parallel with respect to $\wt{D}$, it trivially satisfies the {twistor spinor equation} \eqref{eq-twistor}.

We have already observed in \eqref{eq-CKf2homothety} that $k\in V$ is a (light-like) conformal Killing field.

The general formula for the 
Lie derivative of $\chi$ with respect to the conformal Killing field $k$ is
  \begin{align}\label{liederivative}
    \mc{L}_k\chi=k^a \wt{D}_a\chi-\frac{1}{4}(\wt{D}_{[a}k_{b]})\ga^a\ga^b\chi-\frac{1}{4n}(\wt{D}_pk^p)\chi.
  \end{align}
Hence it is immediate that $\wt{D}_a \chi=0$, $\wt{D}_ak_b=\mu_{ab}+g_{ab}$
and $\mu_{ab} \chi^{bB} = - \chi_a^{B}$
(according to \eqref{eq-mu-eigen} and \eqref{eq-CKf2homothety})
imply \eqref{eq-Liechi}.

The integrability condition \eqref{eq-weylcond} follows immediately from \eqref{eq-Walker-Weyl}.
\end{proof}

For the converse direction we begin with two technical results which will provide a normal form for structures satisfying the above conformal properties.\footnote{AT-C thanks Andree Lischewski for pointing out an unnecessary curvature condition in the statement of Proposition \ref{prop-twistor-spinor2parallel}, which appeared in an earlier version of \cite{Taghavi-Chabert2012a} (preprint \texttt{arXiv:1212.3595}). See also his analogous result in \cite{lischewski2013}.}
\begin{prop}\label{prop-twistor-spinor2parallel}
 Let ${\chi}$ be a pure real twistor spinor on a conformal pseudo-Riemannian manifold $(\widetilde{M} , \mathbf{c})$ of signature $(n,n)$ with associated totally isotropic $n$-plane distribution $\Vertical$. Suppose $\Vertical$ is integrable. Then locally, there is a conformal subclass of metrics in $\mathbf{c}$ for which ${\chi}$ is parallel, i.e.\ if $g$ is any such metric with Levi-Civita connection $\wt{D}$, $\wt{D} {\chi}=0$. Any two such metrics are related by a conformal factor constant along the leaves of $\Vertical$.
\end{prop}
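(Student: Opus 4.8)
The plan is to start from the known local normal form for a pseudo-Riemannian manifold carrying a parallel pure spinor, and work backwards to produce the desired subclass from the weaker twistor-spinor hypothesis. Concretely, I would first invoke the conformal rescaling freedom: the twistor spinor equation \eqref{eq-twistor} is conformally invariant (with a suitable conformal weight for $\chi$), and under a conformal change $g \mapsto \ee^{2f} g$ the Dirac-operator term $\dirac\chi$ transforms by lower-order terms involving $\d f$. So the first step is to compute exactly how $\dirac\chi$ changes under $g\mapsto\ee^{2f}g$ and to identify which choices of $f$ make $\dirac\chi$ vanish; once $\dirac\chi=0$ the twistor equation \eqref{eq-twistor} forces $\wt D\chi=0$. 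The point is that $\dirac\chi$ is (a multiple of) a section of $\wt S_-$, and $\chi$ being pure means $\dirac\chi$ lies in the line spanned by the image of the injector dual to $\chi$; thus the equation $\dirac(\ee^{-f/?}\chi)=0$ (after accounting for weight) becomes a \emph{single} first-order PDE for $f$, essentially $\Vertical$-transverse. The structure of this PDE is what makes local solvability plausible.

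The key geometric input that turns "single PDE" into "solvable, with the stated ambiguity" is the integrability of $\Vertical$. Here is where I would be careful. Using the Frobenius theorem, choose local coordinates $\{x^A, p_A\}$ adapted to $\Vertical = \langle \partial/\partial p_A\rangle$; then $\Vertical = \ker\chi$ pins down $\chi$ up to a scalar function, and the twistor equation, read off in these coordinates, should decompose into (i) components tangent to $\Vertical$, which are automatically satisfied because $\Vertical$ is isotropic and integrable, and (ii) transverse components, which give the equation determining the conformal factor. I expect the transverse part to reduce to the statement that a certain $1$-form built from $\d f$ and the connection coefficients must vanish along transverse directions, and integrability of $\Vertical$ is precisely what guarantees this $1$-form is closed modulo $\Vertical$, hence locally exact after rescaling. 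The resulting $f$ is determined up to addition of a function constant along the leaves of $\Vertical$, which is exactly the claimed ambiguity: two conformal factors differing by such a function both trivialize $\dirac\chi$, and conversely if $\ee^{2f_1}g$ and $\ee^{2f_2}g$ both make $\chi$ parallel then $\d(f_1-f_2)$ annihilates $\Vertical$, i.e.\ $f_1-f_2$ is constant along the leaves.

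The main obstacle I anticipate is the bookkeeping of spinor conformal weights and the precise form of the transverse PDE: one must track how $\chi$, $\dirac\chi$, and the injector $\check\eta$ scale, verify that the purity condition genuinely collapses $\dirac\chi$ to a one-dimensional space (so that "$\dirac\chi=0$" is one scalar equation, not $n$ of them), and then check the closedness/exactness step rigorously — this is where integrability of $\Vertical$ must be used and cannot be dispensed with (cf.\ the footnote on the unnecessary curvature condition). A clean way to organize this is: (1) reduce to the adapted coordinates via Frobenius; (2) write $\chi = h\,\chi_0$ for a reference pure spinor $\chi_0$ and a positive function $h$; (3) express $\dirac(h\chi_0)$ and show its vanishing is equivalent to $\partial_\perp(\log h) = \omega_\perp$ for an explicit $1$-form $\omega$ transverse to $\Vertical$; (4) use integrability of $\Vertical$ to see $\omega_\perp$ is the transverse derivative of some function, giving $h$ locally; (5) absorb $h$ into the conformal factor and read off that \eqref{eq-twistor} now yields $\wt D\chi = 0$; (6) conclude the leafwise-constant ambiguity from the two directions of the previous argument. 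Steps (3)–(4) are the heart, and I would expect the bulk of the real work to sit there.
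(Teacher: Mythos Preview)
Your overall strategy --- find a conformal factor that kills $\dirac\chi$, at which point the twistor equation forces $\wt{D}\chi=0$ --- is exactly what the paper does. But you have the structure of the resulting PDE wrong in two respects, and this is where the real content lies.

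First, $\dirac\chi$ (equivalently $\check{\chi}^A := \frac{1}{\sqrt{2}n}(\dirac\chi)^A$) is a section of $\wt{S}_-$, which has dimension $2^{n-1}$; purity of $\chi$ alone does not cut this down to a line. What the paper actually proves, combining integrability of $V$ with the twistor equation and purity, is that $\check{\chi}^A$ lies in the $n$-dimensional image of $\chi^{aA}$. The conformal transformation law is $\check{\chi}^A \mapsto \check{\chi}^A + \frac{1}{\sqrt{2}}(\wt{D}_a\phi)\chi^{aA}$, so killing $\check{\chi}^A$ means solving $\check{\chi}^A = \frac{1}{\sqrt{2}}\chi^{aA}\wt{D}_a\phi$: that is $n$ equations for the single function $\phi$, not one.

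Second, since $V$ is maximally isotropic, $V^\perp=V$, and the vectors $\chi^{aA}$ span $V$. Hence those $n$ equations prescribe the \emph{leaf-tangent} derivatives of $\phi$, not the transverse ones. This is consistent with the ambiguity you correctly state --- $\phi$ is fixed up to a function constant along the leaves --- but it contradicts your claim that the PDE is ``$V$-transverse.'' More importantly, because the system is overdetermined there is a genuine integrability condition to verify, namely $\chi^{a[A}\wt{D}_a\check{\chi}^{B]} = \alpha^{[A}\check{\chi}^{B]}$, where $\alpha^A$ is defined by $\chi^{aA}\wt{D}_a\chi^{B'}=\alpha^A\chi^{B'}$. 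Integrability of $V$ alone does not discharge this: the paper checks it by invoking the prolongation of the twistor equation, $\wt{D}_a\check{\chi}^A = -\frac{1}{\sqrt{2}}\wt{\Rho}_{ab}\chi^{bA}$, which makes the left side vanish, while the right side vanishes because one has already shown $\alpha^A = \sqrt{2}\check{\chi}^A$. Your steps (3)--(4) gloss over precisely this, and without it the argument does not close.
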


\begin{proof}
In abstract index notation for spinors we write ${\chi}^{A'}$ and ${\check{\chi}}^A :=  \frac{1}{\sqrt{2}n} (\crd {\chi})^A$. The key idea is to use the transformation rule for ${\check{\chi}}^A$ under a conformal change of metric: For a smooth function $\phi\in\cinf(\wt M)$ and $\hat g=\ee^{2\phi}g$ a rescaled metric, the spinor $\check{\chi}^A$ transforms according to
(see e.g. \cite{branson-spin, mrh-thesis})
\begin{align}\label{eq-twistor-conf_transf}
{\check{\chi}}^A & \mapsto \, {\check{\chi}}^A + \frac{1}{\sqrt{2}} (\widetilde{D}_a \phi) {\chi}\ind{^a^A}.
\end{align}
Thus, to find a conformal scaling for which $\chi^{A'}$ is parallel, we must first show that $\check{\chi}^A$ can be expressed as
 \begin{align}\label{eq-potential}
  {\check{\chi}}^A = \frac{1}{\sqrt{2}} {\chi}\ind{^a^A} \widetilde{D}_a \phi \, ,
 \end{align}
for some smooth function $\phi$. We assume of course that $\check{\chi}^A$ is non-vanishing, for otherwise our spinor was already parallel.

We write the twistor equation on $\chi$ as
 $\widetilde{D}_a \chi^{B'}  = - \frac{1}{\sqrt{2}} \check{\chi}_a^{B'},$
and contracting with ${\chi}^{aA}$ this gives
\begin{align}\label{eq-contract}
 {\chi}^{aA} \widetilde{D}_a \chi^{B'} & = - \frac{1}{\sqrt{2}} {\chi}^{aA} \check{\chi}_a^{B'}.
\end{align}
The condition that $V$ is integrable can be re-expressed as \cite{hughston-mason,Taghavi-Chabert2012a}
\begin{align}\label{eq-geosp}
{\chi}^{aA} \widetilde{D}_a {\chi}^{B'} & = \alpha ^A {\chi}^{B'} \, ,
\end{align}
for some spinor $\alpha^A$, which necessarily lies in the image of $\chi^{aA}$.
The equations \eqref{eq-contract} and \eqref{eq-geosp} together imply
\begin{align*}
-\frac{1}{\sqrt{2}} {\chi}^{aA} \check{\chi}_a^{B'}=\alpha ^A {\chi}^{B'}.
\end{align*}
It is shown in \cite{Taghavi-Chabert2012a}, that since $\chi$ is pure, the last formula implies that
\begin{align}\label{eq-alpha}
  \al^A=\sqrt{2}\check{\chi}^A.
\end{align}
In particular, this implies that $\check{\chi}^A$ also lies in the image of $\chi^{aA}$ and thus
\begin{align}\label{eq-inters}
\chi^{aA} \check{\chi}_a^{B'} & = - 2 \, \check{\chi}^A \chi^{B'}.
\end{align}
By differentiating \eqref{eq-potential} one obtains the integrability conditions
\begin{align}\label{eq-int_cond-potential}
 {\chi} \ind{^a^{[A}} \widetilde{D}_a {\check{\chi}} \ind{^{B]}} & = \alpha \ind{^{[A}} {\check{\chi}}^{B]}
\end{align}
for the existence of $\phi$ (see e.g. \cite{hughston-mason,Taghavi-Chabert2012a}). By \eqref{eq-alpha}, the right-hand-side of \eqref{eq-int_cond-potential} vanishes.
On the other hand, the prolongation of the twistor equation $\widetilde{D}_a {\check{\chi}} ^A = - \frac{1}{\sqrt{2}} \widetilde{\Rho}_{ab} {\chi} \ind{^{bB}}$ leads to the vanishing of the left-hand-side of \eqref{eq-int_cond-potential}. Hence both sides of \eqref{eq-int_cond-potential} are zero, and the integrability conditions are therefore satisfied and we can find a local solution $\phi$ of \eqref{eq-potential}.

Finally, by \eqref{eq-twistor-conf_transf}, adding to $\phi$ a smooth function constant along $V$ yields a metric in $\mb{c}$ conformal related to $\hat{g}$, for which $\chi$ is also parallel. This produces the required conformal subclass of $\mb{c}$.
\end{proof}

\begin{rema}
\hfill
\begin{enumerate}[(a)]
\item The relation between pure twistor spinors and the integrability of their associated distributions is already given in \cite{Taghavi-Chabert2012a}. Similar results are obtained in odd dimensions in \cite{Taghavi-Chabert2013}.
\item
A similar argument is employed in \cite{dunajski-2002} in the four-dimensional case to show the existence of a suitable parallelizing scale.
\item
Formula \eqref{eq-inters} is in fact equivalent to $\check{\chi}^A= \frac{1}{\sqrt{2}n} (\crd {\chi})^A$ being pure with associated $n$-plane distribution intersecting that of $\chi^{A'}$ maximally in an $(n-1)$-dimensional distribution \cite{Taghavi-Chabert2012a}.
\end{enumerate}
\end{rema}


\begin{lem}\label{CKf2homothety}
Let $\chi$ be a parallel pure spinor with associated distribution $\Vertical$ and $k^a$ a conformal Killing field tangent to $\Vertical$ such that $\mc{L}_k \chi = -\frac{1}{2} (n+1) \chi$. 
Then $k^a$ is a homothety satisfying \eqref{eq-CKf2homothety}.
\end{lem}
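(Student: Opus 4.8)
The plan is to substitute the hypotheses directly into the general formula \eqref{liederivative} for the Lie derivative of a spinor along a conformal Killing field and solve for the conformal factor. Since $k^a$ is conformal Killing, write $\wt{D}_{(a}k_{b)}=\lambda g_{ab}$ with $\lambda=\tfrac{1}{2n}\wt{D}_pk^p$ a priori just a function; the goal is to show $\lambda\equiv 1$, which at once says $k^a$ is a homothety and that $\wt{D}_ak_b=\wt{D}_{[a}k_{b]}+g_{ab}$, i.e. \eqref{eq-CKf2homothety} with $\mu_{ab}=\wt{D}_{[a}k_{b]}$. Because $\chi$ is parallel, the term $k^a\wt{D}_a\chi$ in \eqref{liederivative} vanishes, so only $(\wt{D}_{[a}k_{b]})\ga^a\ga^b\chi$ and $(\wt{D}_pk^p)\chi$ remain, and both should be expressible through $\lambda$ and $\chi$.

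The key step is to use that $k^a\in\Gamma(\Vertical)=\Gamma(\ker\chi_a^A)$, which by purity of $\chi$ is exactly the Clifford identity $k^a\ga_a\chi=0$. Differentiating this identity and using that $\chi$ and the $\ga$-matrices are parallel for $\wt{D}$ gives $(\wt{D}_bk^a)\ga_a\chi=0$. Substituting the conformal Killing decomposition $\wt{D}_bk_a=\lambda g_{ab}-\wt{D}_{[a}k_{b]}$ yields the algebraic relation $(\wt{D}_{[a}k_{b]})\ga^b\chi=-\lambda\,\ga_a\chi$. Contracting once more with $\ga^a$ and using the Clifford trace identity $\ga^a\ga_a=-2n$ on spinors gives $(\wt{D}_{[a}k_{b]})\ga^a\ga^b\chi=2n\lambda\,\chi$, while tracing the conformal Killing equation gives $\wt{D}_pk^p=2n\lambda$.

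Feeding these into \eqref{liederivative} produces $\mc{L}_k\chi=-\tfrac14(2n\lambda)\chi-\tfrac{1}{4n}(2n\lambda)\chi=-\tfrac12(n+1)\lambda\,\chi$. Comparing with the assumed value $\mc{L}_k\chi=-\tfrac12(n+1)\chi$ and using that a pure spinor is nowhere zero (and $n\geq 1$) forces $\lambda=1$ at every point, hence $\mc{L}_kg=2g$ and $\wt{D}_ak_b=\wt{D}_{[a}k_{b]}+g_{ab}$, which is \eqref{eq-CKf2homothety}. The only genuinely delicate part is the sign- and convention-bookkeeping in the Clifford computation that turns $k^a\ga_a\chi=0$ into the value of the quadratic term $(\wt{D}_{[a}k_{b]})\ga^a\ga^b\chi$; once that is pinned down, the rest is a direct substitution into a known formula.
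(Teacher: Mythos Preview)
Your proof is correct and is essentially identical to the paper's own argument: both differentiate the purity relation $k^a\ga_a\chi=0$ using that $\chi$ is parallel to compute $\mu_{ab}\ga^a\ga^b\chi$ in terms of the conformal factor, then substitute into the Lie-derivative formula \eqref{liederivative} and solve. The only cosmetic difference is that the paper works with $\varphi=-\tfrac{1}{2n}\wt{D}_pk^p=-\lambda$ and concludes $\varphi=-1$, which is your $\lambda=1$.
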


\begin{proof}
We write the conformal Killing field equation as $\wt{D}_a k_b - \mu_{ab} + g_{ab} \varphi = 0$
with $\mu_{ab}=\wt{D}_{[a}k_{b]}$ and $\varphi=-\frac{1}{2n}\wt{D}^p k_p$. Since $\chi$ is parallel, differentiating $k^a \chi_a^A = 0$ yields
\begin{align}\label{eq-parall-chi1}
\mu_{ab} \gamma^b \chi - \varphi \gamma_a \chi = 0
\end{align} so that
\begin{align}\label{eq-parall-chi2}
\mu_{ab} \gamma^a \gamma^b \chi + 2n \varphi \chi = 0 \, .
\end{align}
On the other hand, $\mc{L}_k \chi = -\frac{1}{2} (n+1) \chi$ now reads as
\begin{align}\label{eq-Lie-chi}
- \frac{1}{4} \mu_{ab} \gamma^a \gamma^b \chi + \frac{1}{2} \varphi \chi = - \frac{1}{2} (n+1) \chi \, ,
\end{align}
where we have used \eqref{liederivative} and the fact that $\chi$ is parallel. Combining \eqref{eq-parall-chi2} and \eqref{eq-Lie-chi} yields $\varphi = -1$, hence \eqref{eq-CKf2homothety} follows.
\end{proof}

\begin{prop}\label{prop-char}
Let $(\wt{M},\mb{c})$ be a conformal spin structure of split signature $(n,n)$ satisfying the properties (a)--(d) of Theorem \ref{thm-char}. 
Then the local leaf space of the integrable distribution associated to the pure twistor spinor admits a projective structure $\mb{p}$ such that $(\wt{M},\bf{c})$ is the conformal extension associated to $\mb{p}$.
\end{prop}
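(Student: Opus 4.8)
The plan is to combine the two technical results just established --- Proposition \ref{prop-twistor-spinor2parallel} and Lemma \ref{CKf2homothety} --- with the coordinate normal form of Proposition \ref{prop-desc-PW}, and then upgrade the resulting affine data to projective data. First I would invoke Proposition \ref{prop-twistor-spinor2parallel}: property (a) provides a pure twistor spinor $\chi$ with integrable kernel $\Vertical$, so locally there is a conformal subclass of metrics in $\mb{c}$ for which $\chi$ is parallel, any two differing by a conformal factor constant along the leaves of $\Vertical$. Pick such a metric $g$. Property (b) gives a light-like conformal Killing field $k\in\Ga(\Vertical)$, and property (c) says $\mc{L}_k\chi=-\tfrac12(n+1)\chi$; by Lemma \ref{CKf2homothety} this forces $k$ to be a homothety satisfying \eqref{eq-CKf2homothety}. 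Finally property (d), the Weyl condition $\wt{W}_{abcd}v^aw^d=0$ for $v,w\in\ker\chi$, together with the fact that $\chi$ is parallel (hence the Schouten tensor satisfies $\wt{\Rho}_{ab}v^av^b$-type relations via the prolongation $\wt{D}_a\check{\chi}^A=-\tfrac{1}{\sqrt2}\wt{\Rho}_{ab}\chi^{aB}$ used in the previous proof), should be equivalent to the Riemann curvature condition \eqref{eq-Weyl-cond-PW}. So the hypotheses of Proposition \ref{prop-desc-PW} are met, and we obtain local coordinates $(x^A,p_A)$ in which $g$ has the Patterson--Walker form \eqref{pattersonwalker} for a special torsion-free affine connection $D$ on the leaf space $M$ of $\Vertical$.

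Next I would promote the affine structure to a projective one. The residual freedom in Proposition \ref{prop-twistor-spinor2parallel} --- rescaling $g$ by $\ee^{2\phi}$ with $\phi$ constant along the leaves of $\Vertical$, i.e. $\phi=\phi(x)$ a function pulled back from $M$ --- must be matched against the effect of a projective change on $D$. By the computation leading to Proposition \ref{prop-weight} (specifically the weight $w=2$ case of \eqref{eq-Hhat-H}), a projective change $\widehat{D}_A=D_A+2\de_{(A}^C\Ups_{B)}$ with $\Ups_A=D_A\phi$ changes the canonical fiber coordinates by $\hat p_A=\ee^{2\phi}p_A$ and sends the Patterson--Walker metric $g$ to $\hat g=\ee^{2\phi}g$. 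Thus the conformal subclass of $\mb{c}$ produced by Proposition \ref{prop-twistor-spinor2parallel} corresponds \emph{exactly} to the special connections in a single projective class $\mb{p}$ on $M$. One should also check that the leaf space $M$ is oriented --- this follows from the existence of the volume form $\wt{\varepsilon}$ adapted to $g$ and the product structure $T\wt M=\Vertical\oplus H$ in the normal form, or directly from Lemma \ref{lemarkable} running in reverse. Consequently $(\wt M,\mb{c})$ is, by Definition \ref{def-cPW}, the conformal extension of $(M,\mb{p})$.

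The main obstacle, I expect, is the bookkeeping of the two independent rescaling freedoms and showing they coincide --- i.e. that every metric in the parallel-spinor conformal subclass of Proposition \ref{prop-twistor-spinor2parallel} is realized as a Patterson--Walker metric of some special $D\in\mb{p}$, and conversely, with no extra metrics sneaking in. Concretely one must verify that the coordinate change induced on $T^*M(2)$ by a leafwise-constant conformal rescaling is precisely the fiber rescaling $\hat p_A=\ee^{2\phi}p_A$ coming from a projective change, and that this is consistent with the requirement (from Lemma \ref{lemarkable} and the weight conventions) that $\chi^{A'}$, $\check{\eta}_{A'}$ and $k^a$ have the stated weights. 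A secondary technical point is confirming the equivalence of the Weyl condition \eqref{eq-weylcond} with \eqref{eq-Weyl-cond-PW}: given $\chi$ parallel, the conformal Weyl tensor and Riemann tensor differ by Schouten-tensor terms, and one uses $\wt{\Rho}_{ab}\chi^{bB}=0$-type identities (a consequence of $\wt D\chi=0$) to see that the $v^a w^d$ components of $\wt W$ and $\wt R$ agree. Once these identifications are in place, the statement follows by assembling Propositions \ref{prop-desc-PW} and \ref{prop-weight} and Lemma \ref{lemarkable}.
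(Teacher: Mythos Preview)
Your proposal is correct and follows essentially the same route as the paper's proof: invoke Proposition \ref{prop-twistor-spinor2parallel} and Lemma \ref{CKf2homothety}, use parallelism of $\chi$ to pass from the Weyl condition \eqref{eq-weylcond} to the Riemann condition \eqref{eq-Weyl-cond-PW} (since $\wt{\Rho}_{ab}$ is annihilated by $V$), apply Proposition \ref{prop-desc-PW}, and then identify the residual leafwise-constant conformal freedom with projective changes via Proposition \ref{prop-weight}. The only cosmetic difference is that the paper fixes \emph{two} metrics $g,\hat g$ in the parallel subclass, applies Proposition \ref{prop-desc-PW} to each, and then shows directly --- by projecting the conformal change of Levi-Civita connections to the leaf space --- that the resulting affine connections $D,\widehat D$ are projectively related; you instead fix one metric and argue that the remaining freedom matches the projective class, which amounts to the same computation read in the other direction.
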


\begin{proof}
From Proposition \ref{prop-twistor-spinor2parallel} we know that, locally, we can find metrics $g$ and $\hat{g}$ in $\mb{c}$ such that the twistor spinor $\chi$ is parallel with respect to the corresponding Levi-Civita connections $\wt D$ and $\widehat{\wt{D}}$, and $\hat{g} = \ee^{2 \phi} g$ for some smooth function $\phi$ on $\wt{M}$ which is constant on the leaves of $V$.
From Lemma \ref{CKf2homothety} we know that $k^a$ is a homothety satisfying \eqref{eq-CKf2homothety}.
Since $\chi$ is parallel with respect to $\wt{D}$, the Schouten tensor $\wt{\Rho}_{ab}$ is annihilated by $V$, and thus 
the integrability condition \eqref{eq-weylcond} is equivalent to the condition \eqref{eq-Weyl-cond-PW} on the Riemann tensor $\wt{R}_{abcd}$.
The same argument applies also to $\widehat{\wt{D}}$ and the corresponding Riemann tensor.
We can therefore apply Proposition \ref{prop-desc-PW} to each of the metrics $g$ and $\hat{g}$ with respective special torsion-free affine connections $D$ and $\widehat{D}$ on $M$. 

We shall show that $D$ and $\widehat{D}$ are projectively related. The Levi-Civita connections $\wt{D}$  and $\widehat{\wt{D}}$ are related by
\begin{align*}
\widehat{\wt{D}}_a \xi^b = \wt{D}_a \xi^b + \Upsilon_a \xi^b + \Upsilon_c \xi^c \delta_a^b - \xi_a \Upsilon^b,
\end{align*}
where $\Upsilon_a = \wt{D}_a \phi$. Since $\phi$ is constant along the leaves of $V$, the corresponding $1$-form $\Upsilon_a$ is strictly horizontal.
Hence we consider both $\phi$ and $\Upsilon_a$ as the pull-back of a smooth function $\phi$ and a $1$-form $\Upsilon_A$  on the leaf space $M$, respectively.
Therefore, for $\xi^a$ being a projectable vector field on $\wt{M}$ and $\xi^A$ denoting its projection to $M$, the two underlying affine connections differ by 
$$
\widehat{D}_A \xi^B = D_A \xi^B + \Upsilon_A \xi^B + \Upsilon_C \xi^C \delta_A^B.
$$ 
That is why $D$ and $\widehat{D}$ are projectively equivalent, cf. \eqref{eq-Dhat-D}.

Finally, from Proposition \ref{prop-weight} it follows that $\wt{M}$ is locally identified with $T^*M(2)$.
\end{proof}

Combining propositions \ref{prop-conf-extension} and \ref{prop-char} we immediately obtain our characterization Theorem \ref{thm-char}.

The conformal Patterson--Walker metric constructed above is also equipped with another distinguished spinor as explained below.

\begin{prop}\label{prop-other-dist}
The conformal extension $(\wt{M},\mb{c})$ admits a spinor field $\eta_A \in \wt{\mc{E}}_A[1]$, which, for any choice of Patterson--Walker metric, takes the form
\begin{align}\label{eq-def-eta}
\eta_A & = \frac{1}{2\sqrt{2}} k^b \check{\eta}_{bA} \, .
\end{align}
This spinor is pure off the zero-set of $k$ and satisfies
\begin{align}\label{eq-eta-etab}
\eta^a_{A'} \check{\eta}_{aB} = - 2 \, \eta_B \check{\eta}_{A'} \, ,
\end{align}
where $\eta^a_{A'} := \eta_B \gamma \ind{^a_{A'}^B}$, i.e.\ the totally isotropic $n$-plane distribution $U := \ker \check{\eta}_{aA'}$ intersects the horizontal distribution $H$ maximally and intersects the vertical distribution $V$ in the line distribution spanned by $k^a$. In particular, $k^a = 2 \sqrt{2} \eta_A \chi^{aA}$.

Further, $\eta_A$ satisfies the conformally invariant equation
\begin{align}\label{eq-other-spinor}
\wt{D}_a \eta_A - \frac{1}{\sqrt{2}} \gamma \ind{_a^{B'}_A} \check{\eta}_{B'} & = \frac{1}{8} k^d \wt{W}_{dabc} (\gamma ^b \gamma^c ) \ind{^B_A} \eta_B \, .
\end{align}
In particular, $\eta_A$ is a twistor spinor if and only if $(\wt{M},\mb{c})$ is conformally flat, i.e.\ $(M,\mb{p})$ is projectively flat.
\end{prop}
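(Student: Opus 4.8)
The plan is to work with a fixed Patterson--Walker metric in $\mb{c}$ with Levi-Civita connection $\wt{D}_a$ (for which $\chi^{A'}$ is parallel) and verify each assertion by direct computation in the spinor calculus set up in section~\ref{sec-construction}, using the local coordinate expressions \eqref{eq-for-Josef}, \eqref{eq-Special-K}, and the curvature formulas \eqref{eq-Riem2AffCurv}, \eqref{eq-Walker-Weyl}. First I would check that $\eta_A := \tfrac{1}{2\sqrt{2}} k^b \check{\eta}_{bA}$ is indeed a well-defined weighted spinor of the claimed weight: since $k_a = 2\,\bm{\uptheta}_a \in \wt{\mc{E}}[2]$ and $\check{\eta}^b_A \in \wt{\mc{E}}^b_A[-1]$, contracting $k^b$ (weight $0$, after raising with $\mb{g}$) with $\check{\eta}_{bA}$ gives weight $1$, matching $\eta_A \in \wt{\mc{E}}_A[1]$. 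The identity $k^a = 2\sqrt{2}\,\eta_A \chi^{aA}$ then follows from $k^a \in \Gamma(V)$, the splitting $V \cong \im \check{\eta}_{aA}$, and \eqref{xinu}: writing $k^a = \wt{\alpha}_A \chi^{aA}$ and pairing with $\check{\eta}_{bB}$ recovers $\wt{\alpha}_A = k^b\check{\eta}_{bA} = 2\sqrt{2}\,\eta_A$. Purity of $\eta_A$ and the intersection statement \eqref{eq-eta-etab} I would deduce from the general pure-spinor machinery of \cite{Taghavi-Chabert2012a}: $U := \ker\check{\eta}_{aA'}$ and $H = \ker\check{\eta}_{aA}$ share the pure spinor $\check{\eta}_{A'}$-type data, and the extra vector $k^a$ spanning $U \cap V$ is precisely what makes $\check{\eta}_{A'}$ ``rotate'' into the new pure spinor $\eta_A$; concretely, \eqref{eq-eta-etab} is the contraction identity characterizing maximal $(n-1)$-dimensional intersection of the two $n$-planes, entirely analogous to \eqref{eq-inters} in Proposition~\ref{prop-twistor-spinor2parallel}.

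The main computation is the derivation of \eqref{eq-other-spinor}. I would start from the definition $\eta_A = \tfrac{1}{2\sqrt{2}} k^b \check{\eta}_{bA}$ and apply $\wt{D}_a$, using the Leibniz rule. The term $\tfrac{1}{2\sqrt{2}}(\wt{D}_a k^b)\check{\eta}_{bA}$ is handled via the homothety equation \eqref{eq-CKf2homothety}, $\wt{D}_a k_b = \mu_{ab} + g_{ab}$, together with the eigenvalue property \eqref{eq-mu-eigen} of $\mu\ind{^a_b}$ on $H$ and $V$ and the relation $k_a = 2\,\bm\uptheta_a$; this should produce the ``inhomogeneous'' term $\tfrac{1}{\sqrt{2}}\gamma\ind{_a^{B'}_A}\check{\eta}_{B'}$ on the left-hand side of \eqref{eq-other-spinor}, after expressing $\mu_{ab}$ in terms of the $\gamma$-matrices and $\check{\eta}_{A'}$, $\chi^{A'}$. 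The term $\tfrac{1}{2\sqrt{2}} k^b \wt{D}_a \check{\eta}_{bA}$ requires knowing $\wt{D}_a\check{\eta}_{bA}$; unlike $\chi_a^A$, the injector $\check{\eta}_{aA}$ is not parallel — its covariant derivative is governed by the Christoffel symbols $\wt{\Gamma}\ind{_a^c_b}$ computed in section~\ref{sec-Riem-ext}, hence ultimately by $\Gamma\ind{_A^C_B}$ and the curvature term $R\ind{_{BC}^D_A}p_D$. Contracting the curvature part with $k^b = 2p_D\chi^{bD}$-type expression and re-packaging via \eqref{eq-Riem2AffCurv}, then converting the affine Riemann curvature into the conformal Weyl tensor through \eqref{eq-Walker-Weyl} (the Schouten and Cotton contributions dropping out against $k^b$ and $\chi$ by \eqref{eq-Walker-Rho}, \eqref{eq-Walker-Cotton} and the isotropy relations \eqref{xinu}), should yield exactly the right-hand side $\tfrac18 k^d\wt{W}_{dabc}(\gamma^b\gamma^c)\ind{^B_A}\eta_B$.

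The final claim is then immediate: if $(\wt{M},\mb{c})$ is conformally flat, $\wt{W}_{dabc} = 0$ and \eqref{eq-other-spinor} reduces to $\wt{D}_a\eta_A = \tfrac{1}{\sqrt{2}}\gamma\ind{_a^{B'}_A}\check{\eta}_{B'}$, which is the twistor spinor equation for $\eta_A$ in the form used in Proposition~\ref{prop-twistor-spinor2parallel} (compare $\wt{D}_a\chi^{B'} = -\tfrac{1}{\sqrt2}\check{\chi}_a^{B'}$). Conversely, if $\eta_A$ is a twistor spinor then the right-hand side of \eqref{eq-other-spinor} must vanish, i.e. $k^d\wt{W}_{dabc}(\gamma^b\gamma^c)\ind{^B_A}\eta_B = 0$; since $k^a = 2\sqrt2\,\eta_A\chi^{aA}$ ranges over a nonzero vector in $V$ and $\eta_B$ is a fixed nonzero pure spinor, I would argue — using the symmetries of $\wt{W}$, the already-known condition \eqref{eq-weylcond} that $\wt{W}_{abcd}$ annihilates $V$ in its first and last slots, and non-degeneracy of Clifford multiplication — that this forces $\wt{W}_{abcd} = 0$, hence by \eqref{eq-Walker-Weyl} that $(M,\mb{p})$ is projectively flat. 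I expect the main obstacle to be the bookkeeping in the derivation of \eqref{eq-other-spinor}: correctly tracking the non-parallelism of $\check{\eta}_{aA}$, the weight contributions, and the passage from $R\ind{_{AB}^C_D}$ to $\wt{W}_{abcd}$ while verifying that all the projectively non-invariant pieces (Schouten, Cotton, and the $D_AW$ divergence term) cancel against the $k^b$- and $\chi$-contractions.
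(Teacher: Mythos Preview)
Your approach is essentially the same as the paper's --- direct computation in the spinor calculus, with the final equation \eqref{eq-other-spinor} obtained by applying $\wt{D}_a$ to the definition of $\eta_A$ and repackaging via \eqref{eq-Walker-Weyl}. Two points deserve attention.

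First, you check the conformal weight of $\eta_A$ but not its \emph{independence of the choice of Patterson--Walker metric}. The injector $\check{\eta}_{bA}$ encodes the horizontal distribution $H$, which genuinely changes under a projective change $D \mapsto \widehat{D}$ in $\mb{p}$; the claim ``for any choice of Patterson--Walker metric, $\eta_A$ takes the form \eqref{eq-def-eta}'' therefore requires showing that $k^b\check{\eta}_{bA}$ is unaffected. The paper does this explicitly: under a special projective change with $\Upsilon_A = D_A\phi$, one computes (via \eqref{eq-Hhat-H} with $w=2$) that
\[
\widehat{\check{\eta}}_{aA} = \check{\eta}_{aA} + \sqrt{2}\,(\eta_A \Upsilon_B - \eta_B \Upsilon_A)\,\chi_a^B,
\]
and since $k^a$ annihilates $\chi_a^B$ the extra term drops out when contracted with $k^b$. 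Without this step, $\eta_A$ is only defined relative to a fixed $D$, not on the conformal extension $(\wt{M},\mb{c})$ as such.

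Second, in the converse direction your claim ``if $\eta_A$ is a twistor spinor then the right-hand side of \eqref{eq-other-spinor} must vanish'' needs one more ingredient: you must first know that the left-hand side of \eqref{eq-other-spinor} \emph{is} the twistor (Penrose) operator applied to $\eta_A$, i.e.\ that the right-hand side is Clifford-trace-free. This follows by contracting with $\gamma^a$ and using the first Bianchi identity $\wt{W}_{d[abc]}=0$ together with the trace-freeness of $\wt{W}$, which gives $\gamma^a\bigl(k^d\wt{W}_{dabc}\gamma^b\gamma^c\eta\bigr)=0$ and hence $\check{\eta}_{B'} = -\tfrac{1}{\sqrt{2}\,n}\,(\dirac\eta)_{B'}$. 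Only then does vanishing of the twistor operator force the right-hand side to vanish. Your subsequent sketch that this forces $\wt{W}=0$ via \eqref{eq-Walker-Weyl} is reasonable but will require unpacking the contraction $k^d\wt{W}_{dabc}$ explicitly in terms of the projective Weyl and Cotton tensors.
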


\begin{proof}
 That $\eta_A$ is pure follows from the fact that it lies in the image of $\check{\eta}_{aA}$ since $\check{\eta}_{A'}$ is a pure spinor. That it satisfies \eqref{eq-eta-etab} follows from a direct computation and commuting $\gamma$-matrices. Since $k^a$ and $\check{\eta}_{aA}$ have conformal weights $0$ and $1$ respectively, $\eta_A$  has conformal weight $1$ by \eqref{eq-def-eta}.

We now check that \eqref{eq-def-eta} is independent of the choice of connection in $\mb{p}$. Consider any two projectively related special connections $D_A$ and $\widehat{D}_A$ in $\mb{p}$ corresponding to horizontal distributions $H$ and $\widehat{H}$ on $\wt{M}$ annihilated by pure spinors $\check{\eta}_{A'}$ and $\widehat{\check{\eta}}_{A'}$ respectively. Note that with a choice of trivialization, \eqref{eq-Special-K} allows us to make the identification
\begin{align} \label{nup} 
\eta_A = \frac{1}{\sqrt{2}} p_A \, ,
\end{align}
and similarly for $\widehat{\check{\eta}}_{A'}$. Since the $1$-forms annihilating $H$ and $\widehat{H}$ are related as in \eqref{eq-Hhat-H} with $w=2$, we can then readily check that $\widehat{\check{\eta}}_{A'}$ and $\check{\eta}_{A'}$ are related by $\widehat{\check{\eta}}_{A'} = \check{\eta}_{A'} - \frac{1}{\sqrt{2}} \Upsilon_a \eta^a_{A'}$ where $\Upsilon_a = \Upsilon_A \chi_a^A$, or equivalently, by
\begin{align}\label{eq-hor-proj-ch}
\widehat{\check{\eta}}_{aA} & = \check{\eta}_{aA} + \sqrt{2} ( \eta_A \Upsilon_B - \eta_B \Upsilon_A ) \chi_a^B \, .
\end{align}
Since $k^a$ annihilates $\chi^{A'}$, the result follows immediately.

The final part of the proposition follows from a direct, albeit lengthy, computation.
\end{proof}

The identification \eqref{nup} will prove to be very convenient in explicit computations, and will be used ubiquitously in sections \ref{sec-einstein} and \ref{sec-infsym}.

\begin{rema} We can investigate the geometric properties of the distributions $V = \ker \chi_a^A$, $U =\ker \check{\eta}_{aA'}$ and $V \cap U = \langle k^a \rangle$ viewed as $\mathrm{G}$-structures on $\wt{M}$ with structure group taken to be the stabilizer of $\langle \chi^{A'} \rangle$, $\langle \eta_A \rangle$ or $\langle k^a \rangle$ in $\Spin(n,n)$ at a point. These can be expressed in terms of differential conditions on the fields $\chi^{A'}$, $\eta_A$ or $k^a$ defined up to scale, and are related to the notion of intrinsic torsion of the $\mathrm{G}$-structure. For pure spinor fields, this is the topic of the articles \cite{Taghavi-Chabert2012a,Taghavi-Chabert2013}, to which we refer for details.
\begin{enumerate}[(a)]
\item For $\chi^{A'}$ parallel, the intrinsic torsion is trivial. This implies in particular $\left( \chi^{aA} \wt{D}_a \chi^{bB} \right)  \chi_b^C = 0$, i.e.\ $V$, as any integrable totally isotropic $n$-plane distribution on $(M,\mb{c})$, is totally geodetic \cite{hughston-mason,taghavi-chabert-goldberg,Taghavi-Chabert2012a}.
\item From \eqref{eq-other-spinor}, we deduce $\left( \eta^a_{A'} \wt{D}_a \eta^b_{B'} \right) \eta_{bC'} = k^d \wt{W}_{dabc} \eta^a_{A'} \eta^b_{B'} \eta^c_{C'}$, 
which by the Bianchi identity implies that $\left( \eta^a_{[A'} \wt{D}_a \eta^b_{B'} \right) \eta_{bC']} = 0$.
The distribution $U$ is integrable, i.e.\ $\left( \eta^a_{A'} \wt{D}_a \eta^b_{B'} \right) \eta_{bC'} =0$, if and only if $(\wt{M},\mb{c})$ is conformally flat.
\item Being light-like and conformal Killing, $k^a$ generates a \emph{shear-free congruence of null geodesics} tangent to $U \cap V$, i.e.\ $\left( k^c \wt{D}_c k^{[a} \right) k^{b]} = 0$ and $\mc{L}_k g_{ab} = f \, g_{ab} +  t_{(a} k_{b)}$ for some function $f$ and $1$-form $t_a$.
\item Moreover, this congruence is also \emph{twisting}, i.e.\ $k_{[a} \wt{D}_b k_{c]}$ does not vanish. Since $k_a$ annihilates the rank-$(2n-1)$ distribution $U + V$, this means that $U + V$ is not integrable.
\end{enumerate}
\end{rema}

\begin{rema}
In four dimensions, i.e.\ $n=2$, we can identify $T \wt{M}$ with $\wt{S}_+ \otimes \wt{S}_-$, and use the two-spinor calculus of \cite{penrose-rindler-84}. We can choose a spin invariant skew-symmetric bilinear form $\varepsilon_{AB}$ on $\widetilde{S}_-$, with inverse $\varepsilon^{AB}$, to be preserved by the Levi-Civita connection $\wt{D}_a$ of a Patterson--Walker metric in $\mb{c}$, and identify $\varepsilon_{AB}$ as the volume form on $M$ preserved by the corresponding special connection $D_A \in \mb{p}$. It can be shown \cite{bryant-recent,dunajski-2002} that the function $\Theta_{AB}$ can be expressed in terms of a single function $\Theta = \Theta (x,p)$, i.e.\ 
 $\Theta_{AB}  =  \varepsilon_{AC}\varepsilon_{BD} \parderv{^2}{p_C \partial p_D} \Theta$.
 Then equations \eqref{eq-homogeneous_Theta} and \eqref{eq-linear_Theta} tell us that the function $\Theta$ must be a polynomial of degree $3$ in the coordinates $p_A$, i.e.\ where $\Gamma \ind{_A^B_C}$ are the Christoffel symbols for an affine connection on the projective surface $M$.

The Weyl tensor can be expressed as
\begin{align*}
\wt{W}_{abcd} & = \wt{\Psi}_{A'B'C'D'} \varepsilon_{AB} \varepsilon_{CD} + \wt{\Psi}_{ABCD} \varepsilon_{A'B'} \varepsilon_{C'D'} \, ,
\end{align*}
where $\wt{\Psi}_{A'B'C'D'}$ and $\wt{\Psi}_{ABCD}$ are the self-dual and anti-self-dual parts of the Weyl tensor.   Writing $v^a  = \chi^{A'} v^A$ and $w^a  = \chi^{A'} w^A$ for two arbitrary elements of $V$ for some spinors $v^A$ and $w^A$, we see that \eqref{eq-weylcond} is equivalent to
\begin{align*}
\chi^{A'} \chi^{C'} \wt{\Psi}_{A'B'C'D'} v_B w_D + v^A w^C \wt{\Psi}_{ABCD} \chi_{B'} \chi_{D'} & = 0 \, .
\end{align*}
The integrability condition $\widetilde{W}_{abcd} \gamma^c \gamma^d \chi=0$ for the existence of a twistor spinor $\chi^{A'}$ is $\wt{\Psi}_{A'B'C'D'} \chi^{A'} = 0$, i.e.\ the self-dual Weyl tensor is of Petrov type N. Combined with \eqref{eq-weylcond}, we immediately conclude $\wt{\Psi}_{ABCD} = 0$, i.e.\ the Weyl tensor is self-dual.
\end{rema}

\section{Einstein metrics}\label{sec-einstein}
We say that a non-trivial density $\wt{\sigma} \in \wt{\mc{E}}[1]$ is an  \emph{almost Einstein scale} if the metric $g_{ab} = \wt{\sigma}^{-2} \mb{g}_{ab}$ is Einstein off the zero-set of $\wt{\si}$, i.e.\ $\wt{\Ric}_{ab} = \lambda \, g_{ab}$ for some constant $\lambda$. One can show that this is equivalent to $\wt{\sigma}$ satisfying the conformally invariant equation
\begin{align}\label{eq-aes}
\bigl( \wt{D}_{(a} \wt{D}_{b)} + \wt{\Rho}_{ab} \bigr)_0 \wt{\sigma} & = 0 \, .
\end{align}
We now show that any Einstein scale on $(\wt{M},\mb{c})$ gives rise to solutions to overdetermined projectively invariant differential equations.

One of these is a projective analogue of equation \eqref{eq-aes}, to be precise, a solution $\sigma \in \mc{E}(1)$ to
\begin{align}\label{eq-ars}
\bigl( D_{(A} D_{B)} + \Rho_{AB} \bigr) \sigma & = 0 \, .
\end{align}
Away from their singularity sets,  solutions to this equation determine Ricci-flat affine connections $D_A$ in $\mb{p}$. Thus, they are sometimes referred to as \emph{almost Ricci-flat scales}.

We shall also consider a generalization of Euler vector fields to weighted vector fields: i.e.\ a solution $\xi^A \in \mc{E}^A(-1)$ satisfying
\begin{align}\label{eq-s+2s}
& D_A \xi^B - \frac{1}{n} \delta_A^B (D_C \xi^C)  = 0 \, ,
\end{align}
Equation \eqref{eq-s+2s} implies
\begin{align}
& D_{(A} D_{B)} \xi^C + \delta_{(A}^C P_{B)D} \xi^D  = 0 \, ,  \label{eq-s+2s-prol} \\
& W \ind{_{AB}^C_D} \xi^D   = 0 \, . \label{eq-s+2s-int-cond}
\end{align}

With reference to Lemma \ref{lemarkable} and the fact that $\eta_A$ has conformal weight 1  we prove:
\begin{lem}\label{lem-E-scale-WP}
Let $\sigma \in \mc{E}(1)$ and $\xi^A \in \mc{E}^A(-1)$. 
Then 
\begin{align}\label{eq-local-Einstein-scales}
\wt{\sigma}_-  & :=  \pi^*\sigma  \, , &
\wt{\sigma}_+ & := \sqrt{2} \, \xi^A \eta_A  \, 
\end{align}
are sections of $\wt{\mc{E}}[1]$.
Here, $\pi$ is the projection from $\wt{M}$ to $M$, and $\xi^A$ is viewed as a section of $\wt{\mc{E}}^A$.
\end{lem}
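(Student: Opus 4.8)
\textbf{Proof plan for Lemma \ref{lem-E-scale-WP}.}
The plan is to verify each of the two claims separately, using the weight-counting bookkeeping established in Lemma \ref{lemarkable}. For $\wt{\sigma}_-$: by Lemma \ref{lemarkable}, any projective scale $\sigma \in \mc{E}(1)$ lifts canonically to a conformal scale in $\wt{\mc{E}}[1]$, and this lift is precisely the pull-back $\pi^* \sigma$ (identifying $\mc{E}(1)$ with functions via a choice of special connection, $\pi^*\sigma$ is the corresponding function on $\wt{M}=T^*M(2)$, which is annihilated by $\chi^{aA}\wt{D}_a = \partial/\partial p_A$ since it is independent of the fibre coordinates). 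Hence the first assertion is essentially a restatement of Lemma \ref{lemarkable} and requires only that one exhibit $\pi^*\sigma$ as the claimed lift.

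For $\wt{\sigma}_+$: the point is a weight count combined with the lifting statement for contravariant tensors. By Lemma \ref{lemarkable}, a section $\xi^A \in \mc{E}^A(-1)$ lifts (after a choice of $D\in\mb{p}$) to a section $\wt{\xi}^a \in \wt{\mc{E}}^a[-1]$, and as a spinor field $\xi^A = \wt{\xi}^a\chi_a^A$ gives rise to a section of $\wt{\mc{E}}^A[-1+1] = \wt{\mc{E}}^A[0]$; more directly, one checks that the natural lift of $\xi^A\in\mc{E}^A(-1)$ to a spinor-index object on $\wt M$ has conformal weight $0$. Since $\eta_A \in \wt{\mc{E}}_A[1]$ by Proposition \ref{prop-other-dist}, the contraction $\xi^A \eta_A$ has conformal weight $0 + 1 = 1$, i.e.\ lies in $\wt{\mc{E}}[1]$, and the normalising constant $\sqrt{2}$ is harmless. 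In the concrete trivialization afforded by the identification \eqref{nup}, namely $\eta_A = \tfrac{1}{\sqrt 2} p_A$, one has $\wt{\sigma}_+ = \sqrt{2}\,\xi^A\eta_A = \xi^A(x)p_A$, a function on $\wt M$, which manifestly defines a density of weight $1$; one should also remark that this expression is independent of the choice of $D\in\mb{p}$ used to lift $\xi^A$, because \eqref{eq-s+2s} forces $\xi^A$ to transform projectively in just the way that cancels the transformation \eqref{eq-hor-proj-ch} of $\check{\eta}_{aA}$ (equivalently the transformation $p_A\mapsto \ee^{2\phi}p_A$ of the fibre coordinates), so that the pairing is well defined on $(M,\mb p)$.

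The only genuinely substantive point — and thus the step I expect to be the main obstacle — is checking the conformal-weight / projective-weight consistency of the lift of the contravariant object $\xi^A$, i.e.\ pinning down exactly why $\xi^A\in\mc{E}^A(-1)$ produces a spinor object of conformal weight $0$ on $\wt M$ rather than some other weight; this is where the relation $\wt{\mc E}[1]\leftrightarrow\mc E(1)$ between the two density bundles and the weight-shift rule $\mc E_{A_1\dots A_k}^{B_1\dots B_\ell}(w)\rightsquigarrow \wt{\mc E}_{A_1\dots A_k}^{B_1\dots B_\ell}[w-k+\ell]$ from Lemma \ref{lemarkable} must be invoked carefully (here $k=0$, $\ell=1$, $w=-1$, giving weight $0$). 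Everything else is routine: the first claim is immediate from Lemma \ref{lemarkable}, and once the weight of the lifted $\xi^A$ is fixed, the weight of $\wt\sigma_+=\sqrt 2\,\xi^A\eta_A$ follows by additivity of conformal weights under tensor contraction.
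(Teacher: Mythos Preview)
Your core argument is correct and matches the paper's approach exactly: the paper itself offers no explicit proof, merely prefacing the lemma with ``With reference to Lemma \ref{lemarkable} and the fact that $\eta_A$ has conformal weight $1$ we prove:'', so the intended justification is precisely the weight bookkeeping you carry out. Your application of the rule $\mc{E}^{B_1\dots B_\ell}_{A_1\dots A_k}(w)\rightsquigarrow\wt{\mc{E}}^{B_1\dots B_\ell}_{A_1\dots A_k}[w-k+\ell]$ with $k=0$, $\ell=1$, $w=-1$ to obtain $\xi^A\in\wt{\mc{E}}^A[0]$, followed by contraction with $\eta_A\in\wt{\mc{E}}_A[1]$, is exactly what is needed.

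One correction, however: your remark that ``\eqref{eq-s+2s} forces $\xi^A$ to transform projectively in just the way that cancels the transformation \eqref{eq-hor-proj-ch}'' is misplaced. Equation \eqref{eq-s+2s} is \emph{not} a hypothesis of this lemma---the lemma applies to an arbitrary $\xi^A\in\mc{E}^A(-1)$, and the differential equation only enters later in Proposition \ref{prop-Einsteinlifts}. The lemma is purely a statement about conformal weights, not about projective invariance of the lift; indeed Lemma \ref{lemarkable} explicitly notes that lifts of contravariant tensors depend on the choice of $D\in\mb{p}$, and the lemma here is content to produce a section of $\wt{\mc{E}}[1]$ for each such choice. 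You should drop that sentence.
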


Before we proceed, we note that for any $k^a \in \wt{\mc{E}}^a$, $\sigma \in \wt{\mc{E}}[w]$ and $\sigma^a \in \wt{\mc{E}}^a[w]$, we have
\begin{align*}
\mc{L}_k \wt{\sigma} & = k^a \wt{D}_a \wt{\sigma} - \frac{w}{2n} \wt{\sigma} \wt{D}_a k^a \, , &
\mc{L}_k \wt{\sigma}^a & = k^b \wt{D}_b \wt{\sigma}^a - \wt{\sigma}^b \wt{D}_b k^a - \frac{w}{2n} \wt{\sigma}^a \wt{D}_b k^b \, .
\end{align*}
Choosing a Patterson--Walker metric, these simplify to
\begin{align}\label{eq-Lie-density}
\mc{L}_k \wt{\sigma} & = k^a \wt{D}_a \wt{\sigma} - w \wt{\sigma} \, , &
\mc{L}_k \wt{\sigma}^a & = k^b \wt{D}_b \wt{\sigma}^a - \wt{\sigma}^b \mu \ind{_b^a} - (w + 1) \wt{\sigma}^a \, , 
\end{align}
where we have made use of \eqref{eq-CKf2homothety}. Similar formulae for the Lie derivative on weighted forms can be obtained using the Leibniz rule or the fact that $k^a$ is a conformal Killing field.

\begin{lem}
  The lifts satisfy $\mc{L}_k \wt{\sigma}_{\pm} = \pm \wt{\sigma}_{\pm}$. 
\end{lem}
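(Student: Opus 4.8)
The plan is to compute $\mc{L}_k \wt{\sigma}_{\pm}$ directly from the explicit formulas \eqref{eq-local-Einstein-scales} and the Lie-derivative identities \eqref{eq-Lie-density}, using that the relevant ingredients are either pulled back from $M$ (hence annihilated by the vertical vector field $k^a$) or satisfy known transformation behaviour under $k$.

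For $\wt{\sigma}_- = \pi^* \sigma$: since $\sigma \in \mc{E}(1)$ lifts to a conformal scale of weight $w = 1$ by Lemma \ref{lemarkable}, and since $k^a$ is tangent to the vertical distribution $V$ while $\pi^*\sigma$ is constant along the fibres, we have $k^a \wt{D}_a \wt{\sigma}_- = k^a \pa_a (\pi^*\sigma) = 0$. Substituting $w = 1$ into the first formula of \eqref{eq-Lie-density} gives $\mc{L}_k \wt{\sigma}_- = - \wt{\sigma}_-$, as claimed.

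For $\wt{\sigma}_+ = \sqrt{2}\, \xi^A \eta_A$: here $\xi^A \in \mc{E}^A(-1)$ is pulled back from $M$, so again $k^a \wt{D}_a \xi^A = 0$ (more precisely, after choosing a Patterson--Walker metric $\xi^A$ is the lift of a vector field on $M$ and is therefore $p$-independent, so $k$ annihilates it). On the other hand, $\eta_A = \frac{1}{\sqrt 2} p_A$ by the identification \eqref{nup}, so $k^a \wt{D}_a \eta_A = k^a \pa_a \left( \frac{1}{\sqrt 2} p_A \right) = \frac{1}{\sqrt 2}\, 2 p_A = 2 \eta_A$ since $k = 2 p_B \pa/\pa p_B$ acts on the fibre coordinates with eigenvalue $2$. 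Thus $k^a \wt{D}_a \wt{\sigma}_+ = \sqrt{2}\, \xi^A (k^a \wt{D}_a \eta_A) = 2 \sqrt{2}\, \xi^A \eta_A = 2 \wt{\sigma}_+$. Since $\wt{\sigma}_+$ has conformal weight $w = 1$ (as $\xi^A$ has projective weight $-1$ and $\eta_A$ has conformal weight $1$, so the lift lands in $\wt{\mc{E}}[-1 + 1 + 1 - 0]$, i.e. weight $1$, consistent with Lemma \ref{lem-E-scale-WP}), the first formula of \eqref{eq-Lie-density} gives $\mc{L}_k \wt{\sigma}_+ = 2 \wt{\sigma}_+ - \wt{\sigma}_+ = \wt{\sigma}_+$.

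The only mild subtlety — and the step I would be most careful about — is bookkeeping the conformal weights and the $p$-dependence consistently: one must check that $\xi^A$, viewed via Lemma \ref{lem-E-scale-WP} as a section of $\wt{\mc{E}}^A$, is genuinely fibre-constant for the chosen Patterson--Walker metric (this is exactly how the lift of a contravariant tensor is defined, depending on the choice of special connection), and that $\eta_A$ is indeed $\frac{1}{\sqrt 2} p_A$ in the same trivialization. Once these identifications are pinned down, both computations are immediate from \eqref{eq-Lie-density} and \eqref{nup}, and no curvature or integrability input is needed.
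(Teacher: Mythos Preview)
Your proof is correct and follows essentially the same approach as the paper: compute $k^a \wt{D}_a \wt{\sigma}_\pm$ in local coordinates using \eqref{eq-Special-K} (equivalently \eqref{nup}) to get $2\,\wt{\sigma}_+$ and $0$ respectively, then apply \eqref{eq-Lie-density} with $w=1$. The paper's version is simply terser, combining your intermediate steps into a single sentence.
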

\begin{proof}
By \eqref{eq-Special-K}, we have $k^a \wt{D}_a \wt{\sigma}_+ = 2 \, \wt{\sigma}_+$ and $k^a \wt{D}_a \wt{\sigma}_- = 0$. Applying \eqref{eq-Lie-density} with $w=1$ completes the proof.
\end{proof}

\begin{prop}\label{prop-Einsteinlifts}
\hfill
\begin{enumerate}[(a)]
\item Suppose $\sigma \in \mc{E}(1)$ satisfies \eqref{eq-ars}. Then its lift $\wt{\sigma}_-$ given by \eqref{eq-local-Einstein-scales} is an almost Einstein scale, i.e.\ a solution to \eqref{eq-aes}.
\item Suppose $\xi^A \in \mc{E}^A(-1)$ satisfies \eqref{eq-s+2s} together with the integrability condition
\begin{align}\label{eq-int-cond-E(-1)}
\xi^D W \ind{_{DA}^C_B} & = 0 \, .
 \end{align} Then its lift $\wt{\sigma}_+$ given by \eqref{eq-local-Einstein-scales} is an almost Einstein scale.
\end{enumerate}
In both cases, the rescaled metrics they define are Ricci-flat off the singular sets of $\wt{\sigma}_\pm$.
\end{prop}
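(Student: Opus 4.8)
\emph{Plan.} In both parts the idea is to pass to a Patterson--Walker representative $g$ of $\mb c$ (equivalently, to the trivialisation given by the special connection $D\in\mb p$) and to reduce the conformally invariant equation \eqref{eq-aes} to the projective equations \eqref{eq-ars}, resp.\ \eqref{eq-s+2s}. The standing tools are \eqref{eq-for-Josef}, \eqref{eq-Dv-no-p} for the Levi--Civita connection $\wt D$ of $g$; the parallelism of $\chi_a^A$ and of $g_{ab}$ and the identity $\chi_a^A\chi^{aB}=0$ from \eqref{xinu}; the identifications \eqref{nup}, \eqref{eq-Special-K}, i.e.\ $\eta_A=\tfrac1{\sqrt2}p_A$ and $k^a=2\sqrt2\,\eta_A\chi^{aA}$; the transport rule \eqref{eq-other-spinor} for $\eta_A$; the homothety relation \eqref{eq-CKf2homothety}; and \eqref{eq-Walker-Rho}, \eqref{eq-Walker-Weyl}.

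\emph{Part (a).} Since $\wt\sigma_-=\pi^*\sigma$ is fibre-constant, $\chi^{aA}\wt D_a\wt\sigma_-=0$, and contracting with $\check\eta^a_A$ via \eqref{eq-for-Josef} gives $\wt D_a\wt\sigma_-=(D_A\sigma)\,\chi_a^A$ (with $D_A\sigma$ lifted as in Lemma \ref{lemarkable}). Differentiating once more, using that $\chi_a^A$ is parallel and that the curvature term of \eqref{eq-Dv-no-p} multiplies a horizontal component which is absent here, yields $\wt D_a\wt D_b\wt\sigma_-=(D_AD_B\sigma)\,\chi_a^A\chi_b^B$; with \eqref{eq-ars} and \eqref{eq-Walker-Rho} this gives the \emph{full} equation
\begin{align*}
\bigl(\wt D_{(a}\wt D_{b)}+\wt\Rho_{ab}\bigr)\wt\sigma_-=\bigl(D_{(A}D_{B)}\sigma+\Rho_{AB}\sigma\bigr)\chi_a^A\chi_b^B=0\,,
\end{align*}
which in particular contains \eqref{eq-aes} and, read in the scale $\wt\sigma_-$, forces $\wt\Rho_{ab}=0$ off the zero set, i.e.\ Ricci-flatness.

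\emph{Part (b).} Write $\wt\sigma_+=\sqrt2\,\xi^A\eta_A$ with $\xi^A$ lifted via $D$, and note $\wt D_a\xi^A=(D_B\xi^A)\chi_a^B$ (the curvature term of \eqref{eq-Dv-no-p} again dies against $\chi^{bB}\chi_b^C=0$). Combining this with \eqref{eq-other-spinor}, the Euler equation \eqref{eq-s+2s}, and $\chi_a^A\eta_A=\tfrac1{2\sqrt2}k_a$, the first derivative should come out as
\begin{align*}
\wt D_a\wt\sigma_+=\tfrac1{2n}(D_C\xi^C)\,k_a+\wt\xi_a\,,
\end{align*}
where $\wt\xi_a:=\xi^A\check\eta_{aA}$ is the (isotropic) lowered horizontal lift of $\xi$ --- \emph{provided} the Weyl term $\tfrac18 k^d\wt W_{dabc}(\gamma^b\gamma^c)\ind{^B_A}\eta_B$ of \eqref{eq-other-spinor} is killed after contraction with $\xi^A$. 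This is exactly where \eqref{eq-int-cond-E(-1)} enters: by \eqref{eq-Walker-Weyl} and $\chi_a^A\chi^{aB}=0$ one gets $k^d\wt W_{dabc}=-2\sqrt2\,\eta_D\chi_a^C\chi_b^A\chi_c^B W\ind{_{AB}^D_C}$, and after the Clifford reduction $\chi_b^A\chi_c^B\gamma^b\gamma^c=\chi_{[b}^A\chi_{c]}^B\gamma^{bc}$ the surviving $W$-contraction carries $\xi$ in the first curvature slot, hence vanishes by \eqref{eq-int-cond-E(-1)}. For the second derivative I use $\wt D_bk_a=-\mu_{ab}+g_{ab}$ from \eqref{eq-CKf2homothety}; the trace of the prolongation \eqref{eq-s+2s-prol}, namely $D_BD_C\xi^C=-n\,\Rho_{BD}\xi^D$; the rule $\xi^E R\ind{_{EB}^D_C}=\xi^D\Rho_{BC}-\xi^E\Rho_{EC}\de^D_B$ obtained from \eqref{formulaCprojective} and \eqref{eq-int-cond-E(-1)} to handle the Riemann term in $\wt D_b\wt\xi_a$; and $\xi^Ap_A=\wt\sigma_+$, $\Rho_{AB}\chi_a^A\chi_b^B=\wt\Rho_{ab}$, $p_A\chi_a^A=\tfrac12 k_a$. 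Upon symmetrising, the two occurrences of the covector $\Rho_{AD}\xi^D\chi_a^A$ paired with $k$ cancel, and one is left with $\bigl(\wt D_{(a}\wt D_{b)}+\wt\Rho_{ab}\bigr)\wt\sigma_+=\tfrac1n(D_C\xi^C)\,g_{ab}$, whose trace-free part vanishes --- so $\wt\sigma_+$ is an almost Einstein scale. To upgrade ``Einstein'' to ``Ricci-flat'' I then compute, from the explicit $\wt D_a\wt\sigma_+$ above, that $\wt D_a\wt\sigma_+\,\wt D^a\wt\sigma_+=\tfrac2n(D_C\xi^C)\wt\sigma_+$ (using $k^ak_a=0$, $\wt\xi^a\wt\xi_a=0$, $k^a\wt\xi_a=2\wt\sigma_+$) while $g^{ab}\bigl(\wt D_a\wt D_b+\wt\Rho_{ab}\bigr)\wt\sigma_+=2(D_C\xi^C)$; hence the conformally invariant ``length'' $\wt D_a\wt\sigma_+\,\wt D^a\wt\sigma_+-\tfrac1n\,\wt\sigma_+\,g^{ab}\bigl(\wt D_a\wt D_b+\wt\Rho_{ab}\bigr)\wt\sigma_+$ of the associated parallel tractor vanishes, forcing the Einstein constant to be zero. (Alternatively: this length is constant and, since $\mc L_k\wt\sigma_+=\wt\sigma_+$ has nonzero weight, it must be zero.)

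\emph{Main obstacle.} The whole of part (b): there are several pieces graded by degree in the fibre coordinates, and three separate curvature inputs --- the Riemann tensor via \eqref{eq-Dv-no-p}, the conformal Weyl tensor via \eqref{eq-other-spinor}, and the projective $W$ and $\Rho$ --- must be shown to reassemble, under \eqref{eq-s+2s}, \eqref{eq-s+2s-prol} and crucially the \emph{extra} condition \eqref{eq-int-cond-E(-1)}, into the single pure-trace term $\tfrac1n(D_C\xi^C)g_{ab}$. The most delicate single point is the vanishing of the Weyl term under \eqref{eq-int-cond-E(-1)}, which rests on the purity identities of \eqref{xinu} together with the Clifford reduction above.
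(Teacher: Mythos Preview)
Your overall strategy is sound and the conclusions are correct, but your route for part (b) differs substantially from the paper's and introduces an unnecessary detour.

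For part (a) the paper simply chooses the Patterson--Walker representative in the scale $\sigma$ itself, so that the underlying connection $D$ is Ricci-flat and \eqref{eq-Walker-Rho} gives $\wt\Rho_{ab}=0$ at once. Your direct computation in an arbitrary Patterson--Walker scale is a valid alternative and in fact slightly more informative, since it produces the \emph{full} (not just trace-free) equation.

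For part (b) the paper avoids \eqref{eq-other-spinor} entirely. It writes $\wt\sigma_+=\tfrac12(\xi^A\check\eta^a_A)k_a$ and differentiates using only \eqref{eq-Dv-no-p} and \eqref{eq-CKf2homothety}, obtaining the unconditional formula \eqref{eq-Ds+}, namely $\wt D_a\wt\sigma_+=(D_A\xi^B)p_B\chi_a^A+\xi^B\check\eta_{aB}$. A second differentiation then yields the key identity \eqref{eq-key}, from which one reads off that the three projective inputs \eqref{eq-s+2s}, \eqref{eq-s+2s-prol} and \eqref{eq-int-cond-E(-1)} are exactly what is needed to kill the trace-free part. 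The integrability condition \eqref{eq-int-cond-E(-1)} therefore enters only at the \emph{second}-derivative stage; the first derivative holds for any $\xi^A$. Your claim that the Weyl term in \eqref{eq-other-spinor}, after contraction with $\xi^A$, requires \eqref{eq-int-cond-E(-1)} to vanish is therefore misplaced: since \eqref{eq-Ds+} holds unconditionally, that Weyl contribution must cancel against something you have not identified (either a spinor identity or a term you dropped in the Leibniz split). This is not fatal to your argument---you do invoke \eqref{eq-int-cond-E(-1)} again, correctly, at the second-derivative stage via $\xi^E R\ind{_{EB}^D_C}=\xi^D\Rho_{BC}-\xi^E\Rho_{EC}\delta^D_B$---but it means the first half of your computation is not doing what you think it is doing, and the purported ``most delicate single point'' is a red herring.

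For Ricci-flatness in (b), the paper computes the transformed $\widehat{\wt\Rho}$ directly via the conformal change formula $\widehat{\wt\Rho}=\wt\Rho-\wt D^a\Upsilon_a+(1-n)\Upsilon^a\Upsilon_a$ with $\Upsilon_a=-\wt\sigma_+^{-1}\wt D_a\wt\sigma_+$. Your tractor-length argument is a clean and conceptually attractive alternative, and your explicit verification that $\wt D_a\wt\sigma_+\wt D^a\wt\sigma_+-\tfrac1n\wt\sigma_+\cdot g^{ab}(\wt D_a\wt D_b+\wt\Rho_{ab})\wt\sigma_+=0$ is correct.
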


\begin{proof}
\begin{enumerate}[(a)]
\item Let $\sigma$ be a Ricci-flat scale with associated torsion-free affine connection $D_A$ in $\mb{p}$ on $M$ that is Ricci-flat, i.e.\ $\Rho_{AB}=0$. Then $D_A$ is special and determines a Patterson--Walker metric $g$ with corresponding conformal scale $\wt{\sigma}_-$ as given by \eqref{eq-local-Einstein-scales}. Reading off \eqref{eq-Walker-Rho}, we see that $\wt{\Rho}_{ab} = 0$, i.e.\ $g$ is Ricci-flat.
\item Let us rewrite \eqref{eq-local-Einstein-scales} as $\wt{\sigma}_+ = \frac{1}{2} \left( \xi^A \check{\eta}^a_A \right) k_a$. Then, using the Leibniz rule, \eqref{eq-Dv-no-p}, with $v^A = \xi^A$ and $\alpha_A=0$, \eqref{eq-mu-eigen} and \eqref{eq-CKf2homothety}, we obtain
\begin{align}\label{eq-Ds+}
\wt{D}_a \wt{\sigma}_+ & =  \left( D_A \xi^B \right) p_B \chi_a^A + \xi^B \check{\eta}_{aB} \, .
\end{align}
Similarly,
\begin{align}\label{eq-DDs+}
\wt{D}_a \wt{D}_b \wt{\sigma}_+ & = \left( D_A D_B  \xi^C  - \xi^D R \ind{_{D B}^C_A} \right) \chi_a^A \chi_b^B p_C + 2 \left( D_A \xi^B \right) \chi_{(a}^A \check{\eta}_{b)B} \, .
\end{align}
Finally, using \eqref{eq-Riem2AffCurv}, \eqref{formulaCprojective} and \eqref{eq-Walker-Rho}, we find
\begin{multline}\label{eq-key}
 \left( \wt{D}_{(a} \wt{D}_{b)} \wt{\sigma}_+ + \wt{\Rho}_{ab} \wt{\sigma}_+ \right)_0 =  2 \left( D_A \xi^B - \frac{1}{n} \delta_A^B D_C \xi^C \right) \chi_{(a}^A \check{\eta}_{b)B} \\
 + \left( D_A D_B \xi^C + \delta_A^C P_{BD} \xi^D - \xi^D  W \ind{_{DA}^C_B} \right) p_C \chi_a^A \chi_b^B \, .
\end{multline}
That $\wt{\sigma}_+$ is an almost Einstein scale follows immediately from \eqref{eq-s+2s}, \eqref{eq-s+2s-prol} and \eqref{eq-int-cond-E(-1)}. To show that the rescaled metric is Ricci-flat, we compute the trace $\widehat{\wt{\Rho}}$ of the Rho tensor of the rescaled metric via the transformation rule $\widehat{\wt{\Rho}} = \wt{\Rho} - \wt{D}^a \Upsilon_a + (1-n) \Upsilon^a \Upsilon_a$, where $\wt{\Rho} := \wt{\Rho} \ind{^a_a}$ and $\Upsilon_a := - \sigma_+^{-1} \wt{D}_a \sigma_+$. Using \eqref{eq-Ds+}, \eqref{eq-DDs+} and the fact $\wt{\Rho}=0$ for a Patterson--Walker metric, one easily verifies $\widehat{\wt{\Rho}} = 0$ as required.
\end{enumerate}
\end{proof}

\begin{lem}
Let $\wt{\sigma} \in \wt{\mc{E}}[1]$ with $\mc{L}_k \wt{\sigma} = r \, \wt{\sigma}$ for some real constant $r$. Then $\wt{\sigma}$ is homogeneous of degree $\frac{r+1}{2}$ in $p_A$. In particular, $\wt{\sigma}_+$ is homogeneous of degree $1$ and $\wt{\sigma}_-$ of degree $0$.
\end{lem}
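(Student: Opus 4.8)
The plan is to translate the eigenvalue condition $\mc{L}_k\wt\sigma=r\,\wt\sigma$ into an ordinary Euler relation in the fibre coordinates and then invoke Euler's theorem on homogeneous functions. First I would fix a Patterson--Walker metric $g=\wt\tau^{-2}\mb{g}$ in $\mb{c}$, with conformal scale $\wt\tau\in\wt{\mc{E}}[1]$ and Levi-Civita connection $\wt D$, so that $\wt D_a\wt\tau=0$ and $\wt\sigma$ is represented by the function $f:=\wt\sigma/\wt\tau$ on $\wt M$. By the Leibniz rule, $\wt D_a\wt\sigma=(\partial_a f)\,\wt\tau$, whence $k^a\wt D_a\wt\sigma=(kf)\,\wt\tau$, where $kf$ is the directional derivative of $f$ along $k$. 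Applying the first formula in \eqref{eq-Lie-density} with $w=1$ to the hypothesis $\mc{L}_k\wt\sigma=r\,\wt\sigma$ gives $k^a\wt D_a\wt\sigma=(r+1)\,\wt\sigma$, and substituting the explicit form $k=2\,p_A\,\parderv{}{p_A}$ from \eqref{eq-Special-K} turns this into
\begin{align*}
2\,p_A\,\parderv{f}{p_A} &= (r+1)\,f \, ,
\end{align*}
that is, $p_A\,\parderv{f}{p_A}=\tfrac{r+1}{2}\,f$.

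By Euler's theorem on homogeneous functions, this relation is equivalent to $f$ being (positively) homogeneous of degree $\tfrac{r+1}{2}$ in the fibre coordinates $p_A$, which is the assertion once we regard $\wt\sigma$ as a function via the chosen trivialization. The one point deserving a remark is that "homogeneous of degree $d$ in $p_A$" is a statement about the density $\wt\sigma$ only after a Patterson--Walker metric has been singled out; however, passing to another special connection in $\mb{p}$ replaces $p_A\mapsto\ee^{2\phi}p_A$ together with $\wt\tau\mapsto\ee^{-\phi}\wt\tau$ for some $\phi$ pulled back from $M$, and a one-line computation shows that the homogeneity degree of $f$ in $p_A$ is unchanged. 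Hence the degree is well defined, and the first part of the lemma follows.

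Finally, for the last assertion one specializes the value of $r$: by the preceding lemma $\mc{L}_k\wt\sigma_\pm=\pm\,\wt\sigma_\pm$, so $r=\pm1$ and the degree equals $\tfrac{1+1}{2}=1$ for $\wt\sigma_+$ and $\tfrac{-1+1}{2}=0$ for $\wt\sigma_-$. This is of course also visible directly from \eqref{eq-local-Einstein-scales} via the identification $\eta_A=\tfrac{1}{\sqrt 2}\,p_A$: then $\wt\sigma_+=\xi^A p_A$ is linear in $p$, while $\wt\sigma_-=\pi^*\sigma$ is a pull-back from $M$ and hence independent of $p$. There is no genuine obstacle in the argument; the only care required is the bookkeeping for densities just described.
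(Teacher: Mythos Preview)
Your argument is correct and follows exactly the route the paper takes: apply \eqref{eq-Lie-density} with $w=1$ to rewrite $\mc{L}_k\wt\sigma=r\,\wt\sigma$ as $k^a\wt D_a\wt\sigma=(r+1)\,\wt\sigma$, then substitute \eqref{eq-Special-K} to obtain the Euler relation in $p_A$. The paper compresses this into one line and omits both the explicit trivialization by $\wt\tau$ and the well-definedness check under change of special connection, but your additions are accurate and harmless.
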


\begin{proof}
This follows from \eqref{eq-Lie-density} with $w=1$ and \eqref{eq-Special-K}.
\end{proof}

\begin{prop}\label{prop-Einstein2lifts}
Let $\wt{\sigma} \in \wt{\mc{E}}[1]$ be an almost Einstein scale. Then 
\begin{align*}
\wt{\sigma} & = \wt{\sigma}_+ + \wt{\sigma}_-
\end{align*}
where $\mc{L}_k \wt{\sigma}_\pm = \pm \wt{\sigma}_\pm$. Further, for any choice of Patterson--Walker metric, $\wt{\sigma}_\pm$ can be expressed as the lifts \eqref{eq-local-Einstein-scales}, where
\begin{enumerate}[(a)]
\item $\sigma = \wt{\sigma}_-(x)$ is an almost Ricci-flat scale on $(M,\mb{p})$.
\item $\xi^A = \chi^{aA} \wt{D}_a \wt{\sigma}_+$ satisfies \eqref{eq-s+2s} together with the integrability condition
\eqref{eq-int-cond-E(-1)}.
\end{enumerate}
\end{prop}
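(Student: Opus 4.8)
The plan is to fix an arbitrary Patterson--Walker metric $g \in \mb c$ with underlying special connection $D$ and the associated canonical coordinates $(x^A, p_A)$, so that $\chi^{aA}\wt D_a = \partial/\partial p_A$, $\eta_A = \tfrac{1}{\sqrt 2}p_A$ by \eqref{nup}, and $\chi$ is parallel for $\wt D$. The first step is to extract the purely vertical component of the almost-Einstein equation \eqref{eq-aes}: contracting with $\chi^{aA}\chi^{bB}$ and using that $\Vertical$ is isotropic, that $\wt\Rho_{ab}\chi^{aA}\chi^{bB}=0$ by \eqref{eq-Walker-Rho} and \eqref{xinu}, and that the Christoffel symbols of $g$ vanish when contracted twice with $\chi^{a\bullet}$ (again by \eqref{xinu}), one gets $\partial^2\wt\sigma/\partial p_A\,\partial p_B = 0$. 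Thus $\wt\sigma$ is affine in the fibre coordinates, $\wt\sigma = \sigma(x) + \xi^A(x)\,p_A$, and by the homogeneity lemma its two summands are the $\mc L_k$-eigencomponents of eigenvalues $-1$ and $+1$. Setting $\wt\sigma_- := \pi^*\sigma$ and $\wt\sigma_+ := \xi^A p_A = \sqrt 2\,\xi^A\eta_A$ therefore gives the decomposition $\wt\sigma = \wt\sigma_+ + \wt\sigma_-$ with $\mc L_k\wt\sigma_\pm = \pm\wt\sigma_\pm$, and uniqueness is immediate since the $(+1)$- and $(-1)$-eigenspaces meet only in $0$.

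Next I would substitute $\wt\sigma = \sigma(x) + \xi^A(x)p_A$ back into \eqref{eq-aes} and separate components. Since $\sigma$ is $p$-independent and $\chi_a^A$ is parallel for $\wt D$ (because $\chi^{A'}$ and the $\gamma$-matrices are), the $\sigma$-part contributes $\bigl((D_{(A}D_{B)}+\Rho_{AB})\sigma\bigr)\chi_a^A\chi_b^B$, whose $g$-trace vanishes by \eqref{xinu}. The $\xi^A p_A$-part equals $\tfrac12(\xi^A\check\eta^a_A)k_a$ with $\xi^A = \xi^A(x)$ independent of the fibre coordinates, so the identities of the proof of Proposition \ref{prop-Einsteinlifts}, culminating in \eqref{eq-key}, apply to it verbatim. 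Hence the left-hand side of \eqref{eq-aes} for $\wt\sigma$ is the right-hand side of \eqref{eq-key} plus $\bigl((D_{(A}D_{B)}+\Rho_{AB})\sigma\bigr)\chi_a^A\chi_b^B$. Contracting this expression with $\chi^{aC}\check\eta^b_D$ and with $\check\eta^a_C\check\eta^b_D$ (using \eqref{xinu}) isolates respectively the mixed and the $\chi\chi$ isotypic parts; the $\chi\chi$ part, being affine in the $p_C$ with coefficients depending only on $x$, splits once more by $p$-degree. This yields: $D_A\xi^B - \tfrac1n\delta_A^B D_C\xi^C = 0$, i.e.\ \eqref{eq-s+2s}; $(D_{(A}D_{B)}+\Rho_{AB})\sigma = 0$, i.e.\ \eqref{eq-ars}; and $D_{(A}D_{B)}\xi^C + \delta_{(A}^C\Rho_{B)D}\xi^D - \xi^D W\ind{_{D(A}^C_{B)}} = 0$.

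The first two of these prove (a): $\sigma = \wt\sigma_-(x)$ solves the almost Ricci-flat equation and $\wt\sigma_- = \pi^*\sigma$ is the lift in \eqref{eq-local-Einstein-scales}. For (b), feed \eqref{eq-s+2s} into its prolongation \eqref{eq-s+2s-prol} to cancel the first two terms of the third equation, leaving $\xi^D W\ind{_{D(A}^C_{B)}} = 0$; then combine this with the algebraic consequence \eqref{eq-s+2s-int-cond} of \eqref{eq-s+2s} and the first Bianchi identity $W\ind{_{[AB}^C_{D]}} = 0$ for the projective Weyl tensor to upgrade the symmetrised vanishing to the full condition $\xi^D W\ind{_{DA}^C_B} = 0$, i.e.\ \eqref{eq-int-cond-E(-1)}. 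Since $\xi^A = \chi^{aA}\wt D_a\wt\sigma_+$ and $\wt\sigma_+ = \sqrt2\,\xi^A\eta_A$, the scale $\wt\sigma_+$ is the lift in \eqref{eq-local-Einstein-scales}, completing (b); by Proposition \ref{prop-Einsteinlifts} the $\wt\sigma_\pm$ thus obtained are indeed almost Einstein scales, as they must be.

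The step I expect to be the main obstacle is the very first one: showing that an almost-Einstein scale on $(\wt M,\mb c)$ is necessarily affine in the fibre coordinates $p_A$. This requires a careful analysis of the weighted Levi-Civita connection of a Patterson--Walker metric and its Christoffel symbols, so that the purely vertical component of \eqref{eq-aes} reduces cleanly to $\partial^2\wt\sigma/\partial p_A\,\partial p_B = 0$. Everything afterwards is bookkeeping with the already established identities \eqref{eq-Walker-Rho}, \eqref{eq-key} and the algebraic projective curvature identities; the only other point that genuinely needs care is the passage from $\xi^D W\ind{_{D(A}^C_{B)}}=0$ to \eqref{eq-int-cond-E(-1)}, which uses the symmetries of the projective Weyl tensor.
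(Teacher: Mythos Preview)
Your proposal is correct and follows the same circle of ideas as the paper, but with a more streamlined order of operations. The paper first establishes the eigendecomposition abstractly by showing $\mc L_k^2\wt\sigma = \wt\sigma$ on the finite-dimensional space of almost Einstein scales (using $k^ak^b\wt D_a\wt D_b\wt\sigma = 0$), and only afterwards applies the $\chi^{aA}\chi^{bB}$-contraction to each eigenpiece separately to conclude that $\wt\sigma_\pm$ are affine in $p_A$. You instead apply the $\chi^{aA}\chi^{bB}$-contraction directly to $\wt\sigma$ at the outset, obtain $\partial^2\wt\sigma/\partial p_A\partial p_B = 0$, and read off the eigendecomposition from the $p$-degree; this bypasses the need to observe that $\mc L_k$ preserves the space of Einstein scales. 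For part (a) the paper argues geometrically by passing to the Patterson--Walker metric determined by $\sigma$ and invoking \eqref{eq-Walker-Rho} there, whereas you extract \eqref{eq-ars} directly from the degree-$0$ part of the $\chi\chi$-component; both are valid, though the paper's route avoids having to compute $(\wt D_{(a}\wt D_{b)} + \wt\Rho_{ab})\pi^*\sigma$ explicitly. Your treatment of (b), including the passage from $\xi^DW\ind{_{D(A}^C_{B)}}=0$ to \eqref{eq-int-cond-E(-1)} via \eqref{eq-s+2s-prol}, \eqref{eq-s+2s-int-cond} and the Bianchi identity, matches the paper exactly.
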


\begin{proof}
We use a Patterson--Walker metric throughout. Using \eqref{eq-Lie-density} with $w=1$, together with the Leibniz rule and the fact that $ \mu \ind{^a_b} k^b = -k^a$, we compute
\begin{align*}
\mc{L}_k^2 \wt{\sigma} = k^a k^b \wt{D}_a \wt{D}_b \wt{\sigma} + \wt{\sigma} \, .
\end{align*}
Since $\wt{\sigma}$ is an almost Einstein scale,  $k^a k^b \left( \wt{D}_{(a} \wt{D}_{b)} + \wt{\Rho}_{ab} \right)_0 \wt{\sigma} = k^a k^b \wt{D}_a \wt{D}_b \wt{\sigma} = 0$, where we have used the fact that, for a Patterson--Walker metric, $\wt{\Rho}_{ab}k^b =0$ by \eqref{eq-Walker-Rho}.  Hence $\mc{L}_k^2 \wt{\sigma} = \wt{\sigma}$, i.e.\ $(\mc{L}_k - 1)(\mc{L}_k + 1) \wt{\sigma}=0$. This equation is the characteristic polynomial for $\mc{L}_k$ viewed as a linear operator acting on the finite-dimensional space of Einstein scales, and the decomposition of this space follows immediately. Details and generalizations are given in \cite{gover-silhan-commuting}.

Next, assume that $\wt{\sigma}_\pm$ are almost Einstein scales with $\mc{L}_k \wt{\sigma}_\pm = \pm \wt{\sigma}_\pm$, so that $\chi^{aA} \chi^{bB} \wt{D}_a \wt{D}_b \wt{\sigma}_\pm = 0$. In coordinates, this condition reads
\begin{align*}
\parderv{{}^2}{p_A p_B} \wt{\sigma}_\pm & = 0 \, .
\end{align*}
This means that $\wt{\sigma}_\pm$ are polynomials of degree 1 in $p_A$ with coefficients depending on $x^A$ only, i.e.\ $\wt{\sigma}_\pm = \xi^A p_A + \sigma$, where $\xi^A = \xi^A(x)$ and $\sigma=\sigma(x)$. Now, using \eqref{eq-Lie-density} with $w=1$, $\mc{L}_k \wt{\sigma}_\pm = \pm \wt{\sigma}$ can be recast as
\begin{align*}
k^a \wt{D}_a \wt{\sigma}_+ & = 2 \, \wt{\sigma}_+ \, , & k^a \wt{D}_a \wt{\sigma}_- & = 0 \, .
\end{align*}
Using \eqref{eq-Special-K}, these conditions tell us that $\wt{\sigma}_+$ is homogeneous of degree 1 in $p_A$ and $\wt{\sigma}_-$ homogeneous of degree 0 in $p_A$. Since they are also polynomials in $p_A$, we conclude that $\wt{\sigma}_\pm$ take the form \eqref{eq-local-Einstein-scales}.

For the last part of the proposition, we assume $\wt{\sigma}_\pm$ are almost Einstein scales with $\mc{L}_k \wt{\sigma}_\pm = \pm \wt{\sigma}_\pm$ so that $\wt{\sigma}_\pm$ are given by \eqref{eq-local-Einstein-scales}. We proceed as follows.
\begin{enumerate}[(a)]
\item The  almost Einstein scale $\wt{\si}_-$ defines a conformally related Patterson--Walker metric $\wt{\sigma}_-^{-2} g_{ab}$ with $\wt{\Rho}_{ab} = 0$. By \eqref{eq-Walker-Rho}, we conclude immediately $\Rho_{AB}=0$, i.e.\ the corresponding affine connection on $M$ is Ricci-flat.
\item
Equation \eqref{eq-aes} with $\wt{\sigma}=\wt{\sigma}_+$ implies that the left-hand side of \eqref{eq-key} vanishes, and in particular, each term of the right-hand side must vanish separately, i.e.\ $\xi^A$ satisfies \eqref{eq-s+2s} and
\begin{align}
D_{(A} D_{B)} \xi^C + \delta_{(A}^C \Rho_{B)D} \xi^D - \xi^D  W \ind{_{D(A}^C_{B)}} & = 0 \, . \label{eq-2-DDs}
\end{align}
But with reference to \eqref{eq-s+2s-int-cond} and \eqref{eq-s+2s-prol}, together with the Bianchi identity, equation \eqref{eq-2-DDs} implies \eqref{eq-int-cond-E(-1)}, i.e.\ $\xi^D W \ind{_{DA}^C_B} = 0$.
\end{enumerate}
\end{proof}

Combining Proposition \ref{prop-Einsteinlifts} and Proposition \ref{prop-Einstein2lifts} now gives Theorem \ref{thm-scales}.

\section{Symmetries}\label{sec-infsym}
We now show that any conformal Killing vector $\wt{v}^a$ on $(\wt{M},\mb{c})$, i.e.\ a solution of
\begin{align}
\wt{D}_a \wt{v}_b & = \wt{\phi}_{ab} - \wt{\psi} \, \mb{g}_{ab} \, , \label{eq-conf-iso-prol1}
\end{align}
where $\wt{\phi}_{ab} = \wt{D}_{[a} \wt{v}_{b]}$ and $\wt{\psi} = - \frac{1}{2n} \mb{g}^{ab} \wt{D}_a \wt{v}_b$,  gives rise to solutions of overdetermined projectively invariant differential equations on $(M, \mb{p})$ as claimed by Theorem \ref{thm-confkillingfields}.

Before we proceed, we recall the prolongation equations for \eqref{eq-conf-iso-prol1}:
\begin{align}
\wt{D}_a \wt{\psi} & = \wt{\Rho}_{ab} \wt{v}^b - \wt{\beta}_a \, , \label{eq-conf-iso-prol3} \\
\wt{D}_a \wt{\phi}_{bc} & = -2 \, \mb{g}_{a[b} \wt{\beta}_{c]} - 2 \, \wt{\Rho}_{a[b} \wt{v}_{c]} + \wt{v}^d \wt{W}_{dabc} \, , \label{eq-conf-iso-prol2} \\
\wt{D}_a \wt{\beta}_b & = \wt{\Rho} \ind{_a^c} \wt{\phi}_{cb} - \wt{\psi} \, \wt{\Rho}_{ab} - \wt{v}^d Y_{abd} \, . \label{eq-conf-iso-prol4}
\end{align}
Here, $\wt{\beta}_a$ is defined by \eqref{eq-conf-iso-prol3}.

\subsection{Projectively invariant equations}\label{sec-pro-inv-eq}
An \emph{infinitesimal projective symmetry} is a vector field $v^A$ that preserves the projective structure, i.e.\ for any $D_A$ in $\mb{p}$ and vector field $X^A$,
\begin{align}\label{eq-proj-symG}
\mc{L}_v D_A X^B - D_A \mc{L}_v X^B
& = Q \ind{_{AC}^B} X^C \, , & \mbox{where} & & Q \ind{_{AB}^C} & = 2 \, \delta_{(A}^C \Upsilon_{B)} \, ,
\end{align}
for some $1$-form $\Upsilon_A$.

It can be shown that $v^A$ is an infinitesimal projective symmetry if and only if it satisfies the following projectively invariant overdetermined system of partial differential equations \cite{eastwood-notes}
\begin{align}\label{eq-proj-sym}
\left( D_{(A} D_{B)} v^C + \Rho_{AB} v^C + v^D W \ind{_{D(A}^C_{B)}} \right)_0 & = 0 \, .
\end{align}
Define
\begin{align}\label{eq-proj-sym-xtra}
\begin{aligned}
&\phi_A^B  := D_A v^B - \frac{1}{n} \delta_A^B D_C v^C \, \quad  \psi  := \frac{1}{n} D_C v^C, \\
& \beta_A := - \frac{1}{n+1} D_A D_B v^B -  \Rho_{AB} v^B \, .
\end{aligned}
\end{align}
 Then, under a projective transformation, using \eqref{eq-Dhat-D}, the fields transform as
 \begin{align}
 \begin{aligned}\label{eq-transf-proj-sym}
 \hat{v}^A & = v^A \, , & \hat{\phi}_B^A & = \phi_B^A - \frac{1}{n} \Upsilon_C v^C \delta_B^A + \Upsilon_B v^A \, , \\ \hat{\psi} & = \psi + \frac{n+1}{n} \Upsilon_C v^C \, , &
 \hat{\beta}_A & = \beta_A -  \Upsilon_B \phi^B_A - \Upsilon_A \psi - \Upsilon_A \Upsilon_B v^B \, .
 \end{aligned}
 \end{align}
Equation \eqref{eq-proj-sym} can be written in prolonged form as
\begin{align}
&D_A v^B - \phi_A^B - \delta_A^B \psi  = 0 \, ,\notag\\
&D_A \psi  +  \frac{n+1}{n} \left( \beta_A + \Rho_{AB} v^B \right)  = 0 \, ,\notag \\
&D_{(A} \phi_{B)}^C + \Rho_{AB} v^C \!+ v^D W \ind{_{D(A}^C_{B)}} \!-  \frac{1}{n} \delta_{(A}^C \left(  \Rho_{B)D} v^D \!- (n\!-\!1) \beta_{B)} \right)  = \!0 \, , \label{eq-proj-sym-prol3}\\\notag
&D_A \beta_B - \Rho_{AB} \psi - \Rho_{AC} \phi_B^C -v^C Y_{ABC}  = 0 \, . 
\end{align}
The first two equations immediately follow from \eqref{eq-proj-sym-xtra}, the third one from \eqref{eq-proj-sym}, and the last one from the divergence of the latter equation.

Next, we shall consider the following projectively invariant equation
\begin{align}
    D_C w^{AB} + \frac{2}{n-1} \, \delta_C^{[A} D_D w^{B]D} & = 0\, , \label{eq-bivec}
  \end{align}
where  $w^{AB} \in \mc{E}^{[AB]}(-2)$. Defining 
\begin{align}\label{eq-bivec-xtra}
&\nu^A  := \frac{1}{n-1} D_C w^{CA} \, ,
\end{align}
one can easily verify the transformation rules under a projective change
 \begin{align}\label{eq-transf-bivec}
 \hat{w}^{AB} & = w^{AB} \, , & \hat{\nu}^A & = \nu^A - w^{AB} \Upsilon_B \, .
 \end{align}
Differentiating \eqref{eq-bivec-xtra}, one can show that equation \eqref{eq-bivec} is equivalent to the system
\begin{align}\label{eq-bivec-prol}
\begin{aligned}
&D_C w^{AB} - 2 \, \delta_C^{[A} \nu^{B]}  = 0\, , \\ 
   &D_A \nu^B + \Rho_{AC} w^{CB} + \frac{1}{2(n-2)} w^{CD} W \ind{_{CD}^B_A} = 0 \, . 
\end{aligned}
  \end{align}

Finally, we shall consider a weighted $1$-form $\alpha_A \in \mc{E}_A(2)$ that satisfies the \emph{Killing equation}
\begin{align}\label{eq-proj-Killing}
D_{(A} \alpha_{B)} & = 0 \, .
\end{align}

\subsection{Projectively invariant lifts}
Let $v^A \in \mc{E}^A$, $w^{AB} \in \mc{E}^{[AB]}(-2)$ and $\alpha_A \in \mc{E}_A(2)$, and  $\phi_A^B$, $\psi$,  and $\nu^A$ are given by \eqref{eq-proj-sym-xtra} and \eqref{eq-bivec-xtra}. At this stage, we do not assume that $v^A$, $w^{AB}$ and $\alpha_A$ satisfy \eqref{eq-proj-sym}, \eqref{eq-bivec} and \eqref{eq-proj-Killing} respectively.

\begin{lem}\label{lem-lifts}
Choosing a special torsion-free affine connection $D\in\mb{p}$, we define the vector fields 
 \begin{align}
 \wt{v}_0^a & :=v^A \check{\eta}_A^a  - \sqrt{2} \phi_B^A \eta_A \chi^{aB} + \frac{n-1}{2(n+1)} \psi k^a \,  , \label{eq-lift0}  
\\
   \wt{v}_+^a & := \sqrt{2} w^{AB} \eta_A \check{\eta}^a_B - \frac{1}{\sqrt{2}} (\nu^B \eta_B )  k^a \, , \label{eq-lift+}
 \\
       \wt{v}_-^a & := \alpha_A \chi^{aA} \, , \label{eq-lift-} 
\end{align}
on $\wt{M}$. Then the forms of these vectors are independent of the choice of $D \in \mb{p}$.
\end{lem}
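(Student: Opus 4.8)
The plan is to verify directly that each of \eqref{eq-lift0}, \eqref{eq-lift+} and \eqref{eq-lift-} is unchanged under a change of special torsion-free affine connection $D \mapsto \widehat{D}$ in $\mb{p}$, with associated $1$-form $\Upsilon_A$ as in \eqref{eq-Dhat-D} (necessarily exact, $\Upsilon_A = D_A\phi$, since both connections are special). The first step is to separate the ingredients of these formulas into those that are already projectively invariant and those that genuinely move. The projectors $\chi_a^A$, $\chi^{aA}$ depend only on $\Vertical$ and on $\mb{c}$; the Euler field $k^a$ is the weighted tautological vector field; and $\eta_A$ is invariant by Proposition \ref{prop-other-dist}. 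The one geometric ingredient that changes is the injector, according to \eqref{eq-hor-proj-ch}, which (after raising the index $a$ with the $D$-independent conformal metric) reads $\widehat{\check{\eta}}^a_A = \check{\eta}^a_A + \sqrt{2}\,(\eta_A\Upsilon_B - \eta_B\Upsilon_A)\chi^{aB}$. On the algebraic side, the data $v^A$, $\phi_B^A$, $\psi$ transform by \eqref{eq-transf-proj-sym}, the data $w^{AB}$, $\nu^A$ by \eqref{eq-transf-bivec}, and the weighted covariant field $\alpha_A$ lifts $D$-independently by Lemma \ref{lemarkable}. One point worth recording is that the transformation rules \eqref{eq-transf-proj-sym}, \eqref{eq-transf-bivec}, stated for tensor fields on $M$, carry over verbatim to the induced spinor fields on $\wt{M}$: covariant lifts are $D$-independent, and for a contravariant index the spinor-valued lift $v^A = \wt{v}^a\chi_a^A$ is $D$-independent as well, because the correction in \eqref{eq-hor-proj-ch} lies in the image of $\chi^{aB}$ and $\chi_a^A\chi^{aB} = 0$ by \eqref{xinu}.

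With these preliminaries, the argument proceeds term by term. For $\wt{v}_-^a = \alpha_A\chi^{aA}$ there is nothing to prove, both factors being $D$-independent. For $\wt{v}_0^a$ I substitute \eqref{eq-hor-proj-ch} into the first term and \eqref{eq-transf-proj-sym} into the other two, and rewrite $k^a = 2\sqrt{2}\,\eta_A\chi^{aA}$ (Proposition \ref{prop-other-dist}); the correction to \eqref{eq-lift0} then organizes into two families, one proportional to $(v^A\eta_A)\,\Upsilon_B\chi^{aB}$ and one to $(\Upsilon_Cv^C)\,\eta_B\chi^{aB}$. The first cancels between the contributions of $v^A\check{\eta}^a_A$ and of $-\sqrt{2}\,\phi_B^A\eta_A\chi^{aB}$, and for the second the three terms of \eqref{eq-lift0} contribute, up to the common factor $\sqrt{2}$, the coefficients $-1$, $\tfrac{1}{n}$ and $\tfrac{n-1}{n}$, which sum to zero; hence $\widehat{\wt{v}}_0^a = \wt{v}_0^a$. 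For $\wt{v}_+^a$ the computation is shorter: in the first term the piece $w^{AB}\eta_A\eta_B$ produced by \eqref{eq-hor-proj-ch} vanishes by skew-symmetry, leaving the correction $-2\,w^{AB}\eta_A\Upsilon_B\,\eta_C\chi^{aC}$, while the correction coming from $\hat{\nu}^A = \nu^A - w^{AB}\Upsilon_B$ in the second term, after using $k^a = 2\sqrt{2}\,\eta_C\chi^{aC}$, equals $+2\,w^{AB}\Upsilon_B\eta_A\,\eta_C\chi^{aC}$; the two cancel and $\widehat{\wt{v}}_+^a = \wt{v}_+^a$.

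I do not expect a genuine obstacle here: once the transformation rules above are assembled, the proof is an elementary bookkeeping of $\Upsilon$-terms. The only care needed is in keeping track of the $n$-dependent coefficients in the $\wt{v}_0^a$ computation and in correctly identifying which objects on $\wt{M}$ are intrinsic and which carry the $D$-dependence (namely $\check{\eta}^a_A$ together with the jet data $\phi_B^A$, $\psi$ and $\nu^A$). If one prefers, all the cancellations can also be checked in the canonical fibre coordinates $(x^A,p_A)$ via \eqref{nup}, \eqref{eq-for-Josef} and the rule $p_A \mapsto \ee^{2\phi}p_A$, which provides a convenient independent verification.
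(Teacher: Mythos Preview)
Your proof is correct and follows essentially the same approach as the paper's: identify $\chi_a^A$, $\chi^{aA}$, $k^a$, $\eta_A$ as invariant, isolate the $D$-dependence in $\check{\eta}^a_A$ via \eqref{eq-hor-proj-ch}, and verify that it is compensated by the transformation rules \eqref{eq-transf-proj-sym} and \eqref{eq-transf-bivec}. The paper merely asserts that this counterbalancing ``can be verified'', whereas you have carried out the explicit bookkeeping of $\Upsilon$-terms; the cancellations you record are exactly right.
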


\begin{proof}
We first check the conformal weight of each expression using Lemma \ref{lemarkable}. For instance, we view $w^{AB}$ and $\nu^A$ as sections of $\wt{\mc{E}}^{AB}$ and $\wt{\mc{E}}^A[1]$ respectively. Since $\eta_A$ and $\check{\eta}^a_B$ have weight $1$ and $-1$ respectively, we see that the both terms in \eqref{eq-lift+}, and thus $\wt{v}^a_+$, have weight $0$ as required.

Next, under a projective change of affine connections in $\mb{p}$, $\eta_A$, $\chi_a^A$ and $k^a$ are invariant. In particular,  $\wt{v}_-^a$ is invariant. The invariance of $\wt{v}_0^a$ and $\wt{v}_+^a$ can be verified by observing that the change of horizontal distribution as given \eqref{eq-hor-proj-ch} induced by a projective change, and using \eqref{nup}, counterbalances the transformation rules \eqref{eq-transf-proj-sym} and \eqref{eq-transf-bivec}.
\end{proof}

\begin{lem}\label{lem-lift-properties}
The vector fields in Lemma \ref{lem-lifts} satisfy the following properties:
\begin{enumerate}[(a)]
\item  $\mc{L}_k \wt{v}_\pm^a = \pm 2 \, \wt{v}^a_\pm$ and $\mc{L}_k
\wt{v}^a_0 = 0$;\label{lem-a}

\item $\wt{v}^a_+ $ and $\wt{v}^a_-$ are tangent to $U= \ker \eta_{aA'}$ and $V= \ker \chi_a^A$ respectively, i.e.\  $\wt{v}^a_+ \eta_{aA'} = 0$ and  $\wt{v}^a_- \chi_a^A = 0$;

\item $\wt{v}_0^a$ is not tangent to $U + V = \ker k_a$, i.e.\ $\wt{v}^a_0 k_a$ is not identically zero.
\end{enumerate}
\end{lem}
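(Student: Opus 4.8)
The plan is to verify each of the three claims by a direct computation in canonical coordinates $\{x^A, p_A\}$ associated to a chosen special torsion-free affine connection $D\in\mb{p}$, making systematic use of the identification \eqref{nup}, i.e.\ $\eta_A = \frac{1}{\sqrt{2}} p_A$, together with \eqref{eq-Special-K} for $k^a$ and the action \eqref{eq-mu-eigen} of $\mu\ind{^a_b}$ on $V$ and $H$.

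For part \eqref{lem-a}, I would apply the Lie derivative formulae \eqref{eq-Lie-density}. Since $\wt{v}_+^a$ has conformal weight $0$, the relevant identity is $\mc{L}_k \wt{v}_+^a = k^b\wt{D}_b\wt{v}_+^a - \wt{v}_+^b\mu\ind{_b^a} - \wt{v}_+^a$. Writing $\wt{v}_+^a$ in the form \eqref{eq-lift+} and noting that by \eqref{nup} the coefficients $w^{AB}\eta_A$ and $\nu^B\eta_B$ are homogeneous of degree $1$ in $p_A$, one gets $k^b\wt{D}_b\wt{v}_+^a = 2\wt{v}_+^a$ for the first piece (the term $\sqrt2 w^{AB}\eta_A\check{\eta}^a_B$, being a product of a degree-$1$ function with $\check{\eta}^a_B$ on which $k$ acts trivially along the fibre direction) and similarly for the $k^a$-piece, while $\mu\ind{_b^a}$ acting on $\check{\eta}^b_B$ returns $+\check{\eta}^a_B$ and on $k^b$ returns $-k^a$; collecting terms yields $\mc{L}_k\wt{v}_+^a = 2\wt{v}_+^a$. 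The computation for $\wt{v}_-^a = \alpha_A\chi^{aA}$ is the mirror image: $\alpha_A$ is fibre-independent, $k$ annihilates it, and $\mu\ind{_b^a}\chi^{bA} = -\chi^{aA}$, so $\mc{L}_k\wt{v}_-^a = 0 - (-\wt{v}_-^a) - \wt{v}_-^a = 2\wt{v}_-^a$; wait — one must be careful with signs, so the right reading is that the $\mu$-term contributes $+\wt v_-^a$ and the weight term $-\wt v_-^a$, leaving $\mc{L}_k\wt v_-^a=2\wt v_-^a$. Hmm, that would give $+2$, not $-2$; the resolution is that $\wt v_-^a$ as written uses $\chi^{aA}$, and checking \eqref{eq-mu-eigen} one finds $\mu\ind{^a_b}\chi^{bB}=-\chi^{aB}$, so $\wt v_-^b\mu\ind{_b^a} = -\wt v_-^a$ and $-\wt v_-^b\mu\ind{_b^a}-\wt v_-^a = +\wt v_-^a - \wt v_-^a = 0$ — no. The bookkeeping here is exactly the main obstacle: one must track whether $\wt v_-$ sits in $V$ or $H$. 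Since $\wt v_-^a\propto\chi^{aA}$ lies in $V=\im\chi^{aA}$, and $\mu$ acts as $-\id$ on $V$, the contribution $-\wt v_-^b\mu\ind{_b^a} = +\wt v_-^a$, giving $\mc L_k\wt v_-^a = 0+\wt v_-^a-\wt v_-^a=0$; but the claim is $-2\wt v_-^a$, so in fact $\wt v_-^a$ must be tangent to $H$ rather than $V$ — and indeed this is exactly what part (b) will clarify. I will therefore order the proof so that part (b) is established first, and then part \eqref{lem-a} follows cleanly once the location of each vector field inside the decomposition $T\wt M = V\oplus H$ is pinned down; for $\wt v_0^a$, its weight is $0$ and it contains pieces in both $V$ and $H$, so the $\mu$-contributions cancel against the weight term and one checks $k^b\wt D_b\wt v_0^a = 0$ using fibre-homogeneity degree $0$ of $v^A,\phi_B^A,\psi$.

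For part (b), I would contract: $\wt v_+^a\eta_{aA'}$. Using \eqref{eq-lift+}, $\check{\eta}^a_B$ paired with $\eta_{aA'}$, and $k^a$ paired with $\eta_{aA'}$. By \eqref{eq-eta-etab} and the fact that $U=\ker\check\eta_{aA'}$ contains $k^a$ (from Proposition \ref{prop-other-dist}, $k^a\check\eta_{aA'}$ relates to $\eta_B$), one finds $\check\eta^a_B\eta_{aA'}\propto\eta_B\check\eta_{A'}$-type terms, and the two contributions in \eqref{eq-lift+} cancel precisely because of the relative coefficient $-\frac{1}{\sqrt2}$ chosen in the definition. Similarly $\wt v_-^a\chi_a^A = \alpha_B\chi^{aB}\chi_a^A = 0$ is immediate from the first relation in \eqref{xinu}. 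So part (b) is essentially a one-line consequence of \eqref{xinu} and \eqref{eq-eta-etab} once one substitutes \eqref{nup}.

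For part (c), I would compute $\wt v_0^a k_a$ directly from \eqref{eq-lift0}: the term $v^A\check\eta_A^a k_a$ pairs $\check\eta_A^a$ with $k_a = 2\bm{\uptheta}_a$, and since $k^a = 2\sqrt2\,\eta_A\chi^{aA}$ (from Proposition \ref{prop-other-dist}) one gets $\check\eta_A^a k_a \propto \eta_A$, hence this term is $\propto v^A\eta_A = \frac{1}{\sqrt2}v^Ap_A$, which is generically nonzero; the $\chi^{aB}$-term pairs to zero against $k_a$ since $k^a\in V=\im\chi^{aA}$ is isotropic and $k_a\chi^{aB}=0$, and the $k^a$-term gives $\psi k^ak_a = 0$ by isotropy of $k$. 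Thus $\wt v_0^a k_a = v^A p_A$ up to a nonzero constant, which is not identically zero (as a function on $\wt M$) unless $v^A\equiv 0$. The main obstacle throughout is sign- and index-position bookkeeping in the spinor calculus — especially correctly using $\mu\ind{^a_b}$'s eigenvalue $\pm1$ on $H$ versus $V$ and the normalization $\check\eta_{A'}\chi^{A'}=-\frac12$ — but no conceptual difficulty arises; everything reduces to \eqref{xinu}, \eqref{eq-mu-eigen}, \eqref{eq-eta-etab}, \eqref{eq-CKf2homothety}, \eqref{eq-Lie-density} and the identification \eqref{nup}.
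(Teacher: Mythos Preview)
Your overall strategy is sound and close to the paper's, but there are three concrete errors that prevent the argument from going through as written.

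\textbf{Part (a).} The sign bookkeeping on $\mu$ is where you go wrong. From \eqref{eq-CKf2homothety} one has $\wt D_b k^a=\mu\ind{_b^a}+\delta_b^a$, and since $\mu_{ab}$ is skew, $\mu\ind{_b^a}=-\mu\ind{^a_b}$. Hence for $\wt v_-^a=\alpha_A\chi^{aA}$ (which \emph{is} vertical, since $\chi^{aA}\wt D_a=\partial/\partial p_A$), one gets $\wt v_-^b\mu\ind{_b^a}=-\alpha_A\mu\ind{^a_b}\chi^{bA}=+\alpha_A\chi^{aA}=\wt v_-^a$, and so $\mc L_k\wt v_-^a=0-\wt v_-^a-\wt v_-^a=-2\wt v_-^a$ as claimed. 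Your attempted resolution---that $\wt v_-$ must lie in $H$---directly contradicts part (b) and is false. The paper sidesteps this hazard entirely by working with the commutators $[k^a\wt D_a,\chi^{bA}\wt D_b]=-2\chi^{bA}\wt D_b$, $[k^a\wt D_a,\check\eta^b_A\wt D_b]=0$ and $\mc L_k p_A=2p_A$, then applying the Leibniz rule; this is the cleaner route.

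\textbf{Part (b).} For $\wt v_+^a$ the two summands do \emph{not} cancel against each other. Each vanishes separately: using \eqref{eq-eta-etab} one finds $\check\eta^a_B\eta_{aA'}=-2\eta_B\check\eta_{A'}$, so $w^{AB}\eta_A\check\eta^a_B\eta_{aA'}=-2w^{AB}\eta_A\eta_B\check\eta_{A'}=0$ by the skew-symmetry of $w^{AB}$; and $k^a\eta_{aA'}=0$ since $k^a\in U$ (Proposition \ref{prop-other-dist}). The coefficient $-\tfrac{1}{\sqrt2}$ plays no role here.

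\textbf{Part (c).} You correctly obtain $\wt v_0^ak_a=2v^Ap_A$, but you must also observe that if $v^A\equiv0$ then $\phi_A^B$ and $\psi$ vanish as well, since they are \emph{defined} from $v^A$ via \eqref{eq-proj-sym-xtra}; otherwise $\wt v_0^a$ might be a nonzero vector with $\wt v_0^ak_a=0$, and the statement would fail.
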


\begin{proof}
\begin{enumerate}[(a)]
\item First observe that $[2p_A \parderv{}{p_A}, \parderv{}{p_B}] = -2
\parderv{}{p_B}$
which, using  \eqref{eq-for-Josef} is equivalent to the first relation
in the display
\begin{align}\label{eq-com-k}
[ k^a \wt{D}_a , \chi^{bA} \wt{D}_b] & = - 2 \, \chi^{bA} \wt{D}_b \, ,
&  [k^a \wt{D}_a ,
\check{\eta}^b_A \wt{D}_b] & = 0 \, , & \mc{L}_k p_A & = 2 \, p_A.
\end{align}
The second relation follows similarly using \eqref{eq-for-Josef} and the
last one is obvious.
Further, $\mc{L}_k v^A=0$ and similarly for all sections depending only on $x^A$.
Using \eqref{nup}, these relations and the Leibniz rule, it is a straightforward computation to verify 
part (\ref{lem-a}).

\item  Here $\wt{v}^a_- \chi_a^A = 0$ follows from \eqref{xinu}. Further recall 
$\eta^a_{A'} \check{\eta}_{aB} = - 2 \, \eta_B \check{\eta}_{A'}$ from
\eqref{eq-eta-etab}. Since
$w^{AB}$ is skew-symmetric, the first summand of $\wt{v}^a_+$ inserts trivially
into $\eta_{aA'}$. The second summand inserts trivially using
\eqref{eq-def-eta} since $k$ is null.

\item It follows from \eqref{nup}, and the properties of $k^a$ and $\chi_a^A$ that
$\wt{v}^a_0 k_a =2 \, v^A p_A$
which is zero if and only if $v^A$ is zero. But if $v^A$ is zero then $\phi_A^B$ and $\psi$ are zero since they are defined by \eqref{eq-proj-sym-xtra}.
\end{enumerate}
\end{proof}

\begin{prop}\label{prop-confkillinglift}
\begin{enumerate}[(a)]\hfill
\item Suppose $v^A \in \mc{E}^A$ is an infinitesimal projective symmetry,  i.e.\ satisfies \eqref{eq-proj-sym}. Then its lift $\wt{v}_0^a$ given by \eqref{eq-lift0} is a conformal Killing field.
\item Suppose $w^{AB} \in \mc{E}^{[AB]}(-2)$ satisfies \eqref{eq-bivec}  together with the integrability condition
\begin{align}\label{eq-int-cond-bivec}
    w^{B(A} W \ind{_{B(C}^{D)}_{E)}}=0 \, .
  \end{align}
Then its lift $\wt{v}_+^a$ given by \eqref{eq-lift+} is a conformal Killing field.
\item Suppose  $\alpha_A \in \mc{E}_A(2)$ satisfies the Killing equation \eqref{eq-proj-Killing}. Then its lift $\wt{v}_-^a$ given by \eqref{eq-lift-} is a (conformal) Killing field.
\end{enumerate}

\end{prop}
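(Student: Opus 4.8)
The plan is to verify directly that each lift satisfies the conformal Killing equation \eqref{eq-conf-iso-prol1}, using the coordinate expressions \eqref{eq-for-Josef}, the Leibniz rule, and the basic identities \eqref{xinu}, \eqref{eq-mu-eigen}, \eqref{eq-CKf2homothety}, \eqref{eq-def-eta} and \eqref{nup}. For each of the three cases the strategy is the same: differentiate the lift using $\wt{D}_a$, split the result into its components along the four pieces $\chi_a^A\check{\eta}^b_B$, $\chi_a^A\chi^{bB}$, $\check{\eta}_{aA}\check{\eta}^b_B$, $\check{\eta}_{aA}\chi^{bB}$ of $\wt{\mc{E}}_a^{\phantom{a}b}$, and compare with the required form $\wt{\phi}_{ab}-\wt{\psi}\,\mb{g}_{ab}$, reading off candidate expressions for $\wt{\phi}_{ab}$ and $\wt{\psi}$ along the way. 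The key computational input is \eqref{eq-Dv-no-p}, which already records $\wt{D}_a\wt{v}^b$ for lifts whose components are pulled back from $M$; together with \eqref{eq-Ds+}--\eqref{eq-DDs+} and the analogous derivative of $\eta_A=\tfrac{1}{\sqrt2}p_A$ this handles all the pieces that appear.

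For part (c), with $\wt{v}_-^a=\alpha_A\chi^{aA}$ and $\alpha_A=\alpha_A(x)$, formula \eqref{eq-Dv-no-p} (with $v^A=0$) gives $\wt{D}_a\wt{v}_-^b=(D_A\alpha_B)\chi_a^A\chi^{bB}$, so $\wt{D}_{(a}\wt{v}_{-b)}=(D_{(A}\alpha_{B)})\chi_{(a}^A\chi_{b)}^B=0$ exactly by the Killing equation \eqref{eq-proj-Killing}; hence $\wt{v}_-^a$ is even a genuine Killing field, $\wt{\psi}=0$. For part (a) I would write $\wt{v}_0^a$ using \eqref{nup} as $v^A\check{\eta}_A^a-2\phi_B^A p_A\chi^{aB}+\tfrac{n-1}{2(n+1)}\psi k^a$, differentiate term by term (using \eqref{eq-Dv-no-p} for the first two, \eqref{eq-CKf2homothety} and \eqref{eq-mu-eigen} for the $k^a$ term), symmetrise, and check that the symmetric trace-free part vanishes precisely when the prolonged system \eqref{eq-proj-sym-prol3} holds — the curvature term $v^DW\ind{_{D(A}^C_{B)}}$ from differentiating $\phi$ is cancelled by the $R\ind{_{DA}^C_B}p_D$ contribution in \eqref{eq-Dv-no-p} after using \eqref{formulaCprojective}, just as in the Einstein-scale computation \eqref{eq-key}. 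For part (b), $\wt{v}_+^a=\sqrt2\,w^{AB}\eta_A\check{\eta}^a_B-\tfrac{1}{\sqrt2}(\nu^B\eta_B)k^a$ becomes, via \eqref{nup}, $w^{AB}p_A\check{\eta}^a_B-\tfrac12(\nu^Bp_B)k^a$; differentiating and using \eqref{eq-Dv-no-p} (now the $\alpha$-slot is occupied) together with \eqref{eq-bivec-prol} reduces the obstruction to the term $w^{CD}W\ind{_{CD}^{\ }_{\ }}$-type expression, which is killed by the integrability condition \eqref{eq-int-cond-bivec}.

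The main obstacle will be part (b): the bivector lift mixes a horizontal and a vertical leg, so $\wt{D}_a\wt{v}_+^b$ has nonzero components in \emph{all four} blocks, and organising the skew part (which must reproduce $\wt{\phi}_{ab}$) versus the trace part (which fixes $\wt{\psi}$) requires careful bookkeeping of the Clifford/projector identities \eqref{eq-eta-etab} and \eqref{xinu}. In particular one must check that the $p$-dependent curvature terms coming from \eqref{eq-Dv-no-p} and from $\wt{D}_a\check{\eta}_{bB}$ combine into the projective Weyl tensor $W\ind{_{B(C}^{D)}_{E)}}$ contracted with $w^{B(A}$, so that \eqref{eq-int-cond-bivec} is exactly what is needed; matching the normalisation constants and confirming there is no residual trace term (so $\wt{\psi}$ is determined consistently and $\mc{L}_k$-weight $+2$ from Lemma \ref{lem-lift-properties}(a) is respected) is where the computation is most delicate. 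Everything else is routine once \eqref{eq-Dv-no-p} and the identification \eqref{nup} are in hand.
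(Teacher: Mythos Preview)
Your approach is essentially identical to the paper's: compute $\wt{D}_{(a}\wt{v}_{b)}$ for each lift using \eqref{eq-Dv-no-p} (and its $p$-dependent extension just above it), and observe that the trace-free symmetric part vanishes exactly by the prolonged systems \eqref{eq-proj-sym-prol3}, \eqref{eq-bivec-prol} together with \eqref{eq-int-cond-bivec}, and \eqref{eq-proj-Killing} respectively. One small slip: with \eqref{nup} the middle term of \eqref{eq-lift0} becomes $-\phi_B^A p_A\chi^{aB}$, not $-2\phi_B^A p_A\chi^{aB}$; and your concern about part (b) is a bit overstated, since after symmetrisation the computation collapses cleanly to \eqref{eq-bivec2CKf} with $\wt{\psi}=\nu^Cp_C$.
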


\begin{proof}
In the following we work with a choice of Patterson--Walker metric $g\in\mb{c}$.
\begin{enumerate}[(a)]
\item Suppose $v^A$ satisfies \eqref{eq-proj-sym}, so that $\phi_A^B$, $\psi$ and $\beta_A$ are given by \eqref{eq-proj-sym-xtra}, and lift $v^A$ to $\wt{v}^a:=\wt{v}^a_0$ as given by \eqref{eq-lift0}. Then, using \eqref{eq-Dv-no-p},
\begin{multline}\label{eq-projsym2CKf}
\wt{D}_{(a} \wt{v}_{b)} = 
\left( D_A v^{B} - \frac{1}{n} D_C v^C \delta_A^B  - \phi_A^B \right) \chi_{(a}^A \check{\eta}_{b)B} 
+ \frac{1}{2} \left( \frac{1}{n} D_C v^C + \frac{n-1}{n+1} \,  \psi \right) g_{ab} \\ 
- \Bigl( D_A \phi_B^C + \Rho_{AB} v^C + v^D W \ind{_{DA}^C_B}  \\
 -  \frac{1}{n} \delta_A^C \left( \Rho_{BD} v^D - (n-1) \beta_B \right) \Bigr) p_C \chi_{(a}^A \chi_{b)}^B  \, .
\end{multline}
Since \eqref{eq-proj-sym} is equivalent to \eqref{eq-proj-sym-prol3}, it is clear that the first and third terms of \eqref{eq-projsym2CKf} vanish, and so \eqref{eq-projsym2CKf} is proportional to the metric, i.e $\wt{v}^a$ is a conformal Killing field.

\item  Suppose $w^{AB}$ satisfies \eqref{eq-bivec}, and lift $w^{AB}$ to $\wt{v}^a:=\wt{v}^a_+$ as given by \eqref{eq-lift+}.  Then, using \eqref{eq-Dv-no-p},
\begin{multline}\label{eq-bivec2CKf}
\wt{D}_{(a} \wt{v}_{b)} = \left( D_A w^{BC} - 2 \delta_A^{[B} \nu^{C]}  \right) \chi_{(a}^A \check{\eta}_{b)B} p_C  -  (\nu^C p_C) g_{ab} \\ 
- \chi_{(a}^A \chi_{b)}^B \left( \left( D_A \nu^C + \Rho_{AE} w^{EC} \right) \delta_B^D - w^{EC} W \ind{_{EA}^D_B} \right) p_C p_D \, .
\end{multline}
Since  \eqref{eq-bivec} is equivalent to \eqref{eq-bivec-prol}, and we assume in addition \eqref{eq-int-cond-bivec}, we immediately conclude $\wt{D}_{(a} \wt{v}_{b)} =  -  (\nu^C p_C) g_{ab}$, i.e.\ $\wt{v}^a$ is conformal Killing.
\item
 Suppose $\alpha_A$ is a solution to \eqref{eq-proj-Killing}, and lift $\alpha_A$ to $\wt{v}^a:=\wt{v}^a_-$ as given by \eqref{eq-lift-}. Then, using \eqref{eq-Dv-no-p},
\begin{align}\label{eq-projKilling2CKf}
\wt{D}_{(a} \wt{v}_{b)} & = \left( D_A \alpha_B \right) \chi_{(a}^A \chi_{b)}^B \, .
\end{align}
By \eqref{eq-proj-Killing}, we now conclude  that $\wt{v}^a$ is a (conformal) Killing field.

\end{enumerate}
\end{proof}

\subsection{Decomposition of conformal Killing fields}

Before we proceed, we record the following technical lemma.
\begin{lem}\label{lem-hom}
Let $\wt{v}^a \in \wt{\mc{E}}^a$ be a vector field on $\wt{M}$. Choose a Patterson--Walker metric so that $\wt{v}^a = \wt{v}^A \check{\eta}^a_A + \wt{\alpha}_A \chi^{aA}$ for some $\wt{v}^A$ and $\wt{\alpha}_A$. Then $\mc{L}_k \wt{v}^a = 2 \, r \, \wt{v}^a$ for some real constant $r$ if and only if $\wt{v}^A$ and $\wt{\alpha}_A$ are homogeneous of degree $r$ and $r +1$ in $p_A$ respectively. In particular,
\begin{enumerate}[(a)]
\item  $\mc{L}_k \wt{v}^a = 0$ if and only if $\wt{v}^A$ and $\wt{\alpha}_A$ are homogeneous of degree $0$ and $1$ in $p_A$ respectively.

\item  $\mc{L}_k \wt{v}^a = 2 \, \wt{v}^a$ if and only if $\wt{v}^A$ and $\wt{\alpha}_A$ are homogeneous of degree $1$ and $2$ in $p_A$ respectively;

\item  $\mc{L}_k \wt{v}^a = -2 \, \wt{v}^a$ if and only if $\wt{v}^A$ and $\wt{\alpha}_A$ are homogeneous of degree $-1$ and $0$ in $p_A$ respectively;

\end{enumerate}
\end{lem}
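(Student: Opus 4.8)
The plan is to reduce everything to the elementary fact that $k$ acts on fibre\hyp{}homogeneous functions as twice the Euler operator. First I would fix a special connection $D \in \mb{p}$ and pass to its Patterson--Walker metric, so that by \eqref{eq-for-Josef} the vector field $\wt{v}^a$ reads
\begin{align*}
  \wt{v} = \wt{v}^A \Bigl( \partial_{x^A} + \Gamma \ind{_A^C_B} \, p_C \, \partial_{p_B} \Bigr) + \wt{\alpha}_A \, \partial_{p_A} \, ,
\end{align*}
with $\wt{v}^A = \wt{v}^A(x,p)$ and $\wt{\alpha}_A = \wt{\alpha}_A(x,p)$ honest scalar functions on $\wt{M}$, while $k = 2 \, p_B \, \partial_{p_B}$ by \eqref{eq-Special-K}. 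The two bracketed fields span $H$ and $V$ respectively, so $\wt{v} = \wt{v}^A(\check{\eta}^b_A \wt{D}_b) + \wt{\alpha}_A(\chi^{bA}\wt{D}_b)$ is nothing but the splitting $T\wt{M} = H \oplus V$.

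Next I would expand $\mc{L}_k \wt{v} = [k,\wt{v}]$ by the Leibniz rule for the Lie bracket and insert the commutation relations \eqref{eq-com-k}, namely $[k,\check{\eta}^b_A\wt{D}_b] = 0$ and $[k,\chi^{bA}\wt{D}_b] = -2\,\chi^{bA}\wt{D}_b$. Because $\wt{v}^A$ and $\wt{\alpha}_A$ are scalars, $\mc{L}_k$ acts on them simply as the derivation $k = 2\,p_B\partial_{p_B}$, and one obtains
\begin{align*}
  \mc{L}_k \wt{v} = \bigl( 2\,p_B \partial_{p_B} \wt{v}^A \bigr) \, \check{\eta}^b_A \wt{D}_b + \bigl( 2\,p_B \partial_{p_B} \wt{\alpha}_A - 2\,\wt{\alpha}_A \bigr) \, \chi^{bA} \wt{D}_b \, .
\end{align*}
By uniqueness of the $H\oplus V$ decomposition, $\mc{L}_k\wt{v}^a = 2r\,\wt{v}^a$ is therefore equivalent to the pair of scalar identities $p_B\partial_{p_B}\wt{v}^A = r\,\wt{v}^A$ and $p_B\partial_{p_B}\wt{\alpha}_A = (r+1)\,\wt{\alpha}_A$, which by Euler's relation say exactly that $\wt{v}^A$ is homogeneous of degree $r$ and $\wt{\alpha}_A$ of degree $r+1$ in the $p_B$. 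The three itemised cases are the values $r=0,1,-1$.

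The same conclusion can be reached without brackets by substituting the decomposition straight into \eqref{eq-Lie-density} with $w=0$ and using the eigenvalue relations \eqref{eq-mu-eigen} for $\mu \ind{^a_b}$ on $H$ and $V$; I expect no genuine obstacle on either route. The only points deserving a word of care are that the coefficients $\wt{v}^A,\wt{\alpha}_A$ really do transform as scalars under the flow of $k$ --- which is precisely the content of the vanishing bracket $[k^a\wt{D}_a,\check{\eta}^b_A\wt{D}_b]=0$ --- and that, when $r<0$ as in case (c), ``homogeneous of degree $r$'' is to be understood on the complement of the zero section of $T^*M(2)$.
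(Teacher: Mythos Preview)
Your proof is correct and follows essentially the same approach as the paper, which simply cites \eqref{eq-Lie-density} (or equivalently \eqref{eq-com-k}) together with \eqref{eq-Special-K}; you have merely spelled out the computation in detail, including the alternative route via \eqref{eq-Lie-density} and \eqref{eq-mu-eigen} that the paper also mentions.
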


\begin{proof}
This follows from \eqref{eq-Lie-density}, or \eqref{eq-com-k}, and \eqref{eq-Special-K}.
\end{proof}

\begin{prop}\label{prop-conf-iso2lift}
A conformal Killing field $\wt{v}^a \in \wt{\mc{E}}^a$  can be uniquely decomposed as
\begin{align}\label{eq-conf-iso2lift}
\wt{v}^a & = \wt{v}^a_+ +  \wt{v}^a _0 + \wt{v}^a _- + c \, k^a
\end{align}
where $\mc{L}_k \wt{v}_\pm^a = \pm 2 \, \wt{v}^a_\pm$, $\mc{L}_k \wt{v}^a_0 = 0$, $c$ is some constant, and $\mu \ind{^a_b} \wt{D}_a \wt{v}_0^b - \frac{1}{n} \wt{D}_c \wt{v}^c_0 = 0$ with $\mu_{ab} = \wt{D}_{[a} k_{b]}$, with respect to any Patterson--Walker metric. Further, $\wt{v}_0^a$, $\wt{v}_+^a$ and $\wt{v}_-^a$ can be expressed as the lifts \eqref{eq-lift0}, \eqref{eq-lift+} and \eqref{eq-lift-} respectively, where
\begin{enumerate}[(a)]
\item $v^A = \frac{1}{2} \chi^{aA} \wt{D}_a \left( k_b  \wt{v}_{0}^b \right)$ is an infinitesimal projective symmetry, i.e.\ satisfies \eqref{eq-proj-sym}.
\item $w^{AB} = \frac{1}{2} \chi^{aA} \chi^{B}_b \wt{D}_{a} \wt{v}_{+}^b$ satisfies \eqref{eq-bivec} together with the integrability condition \eqref{eq-int-cond-bivec}.
\item $\alpha_A = \check{\eta}_{aA}  \wt{v}_{-}^a$ satisfies the Killing equation \eqref{eq-proj-Killing}.
\end{enumerate}
\end{prop}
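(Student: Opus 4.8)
The plan is to decompose a given conformal Killing field $\wt{v}^a$ according to the eigenvalues of $\mc{L}_k$ acting on the (finite-dimensional) space of conformal Killing fields, and then identify each eigencomponent with one of the lifts from Lemma~\ref{lem-lifts}. First I would establish the decomposition \eqref{eq-conf-iso2lift}. Since $k^a$ is a conformal Killing field, $\mc{L}_k$ preserves the finite-dimensional space of conformal Killing fields of $\mb{c}$; I would compute the minimal polynomial of this action using the homothety equation \eqref{eq-CKf2homothety} and the prolongation equations \eqref{eq-conf-iso-prol1}--\eqref{eq-conf-iso-prol4}. Applying $\mc{L}_k$ twice to $\wt{v}^a$, together with $\mc{L}_k^2 k^a=0$ and the fact that $\wt{\Rho}_{ab}k^b=0$, $\wt{W}_{dabc}k^d k^b = 0$ for a Patterson--Walker metric (from \eqref{eq-Walker-Weyl} and \eqref{eq-Walker-Rho}), should show that $\mc{L}_k$ satisfies $(\mc{L}_k-2)(\mc{L}_k)^2(\mc{L}_k+2)=0$ on this space, so that $\wt{v}^a$ decomposes into eigencomponents for $\pm 2$, a ``$0$-eigenspace'', and a generalized part spanned by $k^a$ itself (the Jordan block at $0$). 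This gives \eqref{eq-conf-iso2lift} with $\wt{v}_0^a$ characterized by $\mc{L}_k\wt{v}_0^a=0$ \emph{and} the extra normalization $\mu\ind{^a_b}\wt{D}_a\wt{v}_0^b-\tfrac1n\wt{D}_c\wt{v}_0^c=0$, which selects a complement to $\langle k^a\rangle$ inside the $0$-eigenspace.

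Next I would identify the eigencomponents with the lifts. By Lemma~\ref{lem-hom}, $\mc{L}_k\wt{v}^a_+=2\wt{v}^a_+$ forces $\wt{v}_+^A$ homogeneous of degree $1$ and the $\chi$-component of degree $2$ in $p_A$; writing out $\wt{D}_{(a}\wt{v}_{b)}=0$ for the trace-free part using \eqref{eq-Dv-no-p} and matching the $\chi_{(a}^A\check{\eta}_{b)B}$ and $\chi_a^A\chi_b^B$ components should show, after using \eqref{nup} to convert $\eta_A$ into $\tfrac{1}{\sqrt2}p_A$, that the component is exactly of the form \eqref{eq-lift+} with $w^{AB}=\tfrac12\chi^{aA}\chi^B_b\wt{D}_a\wt{v}_+^b$ skew (skewness from the conformal Killing equation), and that the vanishing of the two matched components is precisely \eqref{eq-bivec-prol} together with the integrability condition \eqref{eq-int-cond-bivec} (this is essentially running \eqref{eq-bivec2CKf} backwards). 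The analogous argument with $\mc{L}_k\wt{v}_-^a=-2\wt{v}_-^a$ and Lemma~\ref{lem-hom}(c) forces $\wt{v}_-^a$ tangent to $V=\ker\chi_a^A$ with $p$-independent coefficient, hence of the form \eqref{eq-lift-} with $\alpha_A=\check{\eta}_{aA}\wt{v}_-^a$, and \eqref{eq-projKilling2CKf} run backwards gives \eqref{eq-proj-Killing}. For $\wt{v}_0^a$ with $\mc{L}_k\wt{v}_0^a=0$, Lemma~\ref{lem-hom}(a) gives degrees $0$ and $1$; the extra normalization kills the spurious $k^a$-freedom and forces the $\check{\eta}$-component to be linear in $p$ with coefficient matching $\phi_A^B=D_Av^B-\tfrac1n\delta_A^BD_Cv^C$ in \eqref{eq-proj-sym-xtra}, and reading \eqref{eq-projsym2CKf} backwards yields \eqref{eq-proj-sym-prol3}, equivalently \eqref{eq-proj-sym}, with $v^A=\tfrac12\chi^{aA}\wt{D}_a(k_b\wt{v}_0^b)$. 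Uniqueness of the decomposition follows from the directness of the eigenspace/generalized-eigenspace splitting, and the well-definedness of $v^A$, $w^{AB}$, $\alpha_A$ (independent of the choice of $D\in\mb{p}$) is guaranteed by Lemma~\ref{lem-lifts}.

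The main obstacle I anticipate is the careful bookkeeping in matching components of $\wt{D}_{(a}\wt{v}_{b)}$: one must verify that the \emph{trace-free} conformal Killing equation, expanded via \eqref{eq-Dv-no-p} and the Leibniz rule against the frame $\{\chi_a^A,\check{\eta}_{aA}\}$, produces exactly the prolonged projective systems and no residual constraints—in particular that the integrability conditions \eqref{eq-int-cond-bivec} (respectively $\xi^DW=0$-type conditions) emerge from the $p_Cp_D$-quadratic terms and are not extra hypotheses. This requires using the decompositions \eqref{eq-Walker-Weyl}--\eqref{eq-Walker-Cotton} of the conformal curvature in terms of projective curvature, the projective Bianchi identity, and the symmetry $W\ind{_{AB}^C_D}=W\ind{_{[AB]}^C_D}$ with vanishing total trace. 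A secondary technical point is confirming that the $0$-eigenspace of $\mc{L}_k$ is genuinely two-dimensional-worth-larger than the $\wt{v}_0$-space exactly by the span of $k^a$, i.e.\ that $\mc{L}_k$ has a single Jordan block of size $2$ at eigenvalue $0$ and is semisimple on the $\pm 2$ eigenspaces; this follows from \eqref{eq-conf-iso-prol1}--\eqref{eq-conf-iso-prol4} evaluated on $k^a$ but needs to be spelled out.
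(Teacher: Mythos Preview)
Your overall strategy matches the paper's: decompose by the action of $\mc{L}_k$ on the finite-dimensional space of conformal Killing fields, then identify each eigencomponent with one of the lifts by running \eqref{eq-projsym2CKf}, \eqref{eq-bivec2CKf}, \eqref{eq-projKilling2CKf} backwards. Two points, however, need correction.

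First, the minimal-polynomial computation is off. Using \eqref{eq-CKf2homothety}, $\mc{L}_k\mu\ind{_a^b}=0$, and the prolongation \eqref{eq-DDs} together with $k^a\wt{\Rho}_{ab}=0$ and $k^a\wt{W}_{abcd}k^d=0$, one finds $k^ak^b\wt{D}_a\wt{D}_b\wt{v}_c=0$ and hence $\mc{L}_k^3\wt{v}_a=4\,\mc{L}_k\wt{v}_a$, i.e.\ $\mc{L}_k(\mc{L}_k-2)(\mc{L}_k+2)=0$. So $\mc{L}_k$ is \emph{diagonalisable} on conformal Killing fields; there is no Jordan block at $0$. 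The field $k^a$ is a genuine $0$-eigenvector ($\mc{L}_k k^a=[k,k]^a=0$), and the $c\,k^a$ term in \eqref{eq-conf-iso2lift} arises not from a generalised eigenspace but from splitting the $0$-eigenspace: the linear functional $\wt{u}^a\mapsto \mu\ind{^a_b}\wt{D}_a\wt{u}^b-\tfrac{1}{n}\wt{D}_c\wt{u}^c$ vanishes on lifts of the form \eqref{eq-lift0} and takes the value $-2(n+1)\neq 0$ on $k^a$, so it cuts out a complement to $\langle k^a\rangle$. Your sentence ``selects a complement to $\langle k^a\rangle$ inside the $0$-eigenspace'' is thus correct, but the Jordan-block justification should be dropped.

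Second, Lemma~\ref{lem-hom} only gives \emph{homogeneity} of the components in $p_A$; it does not by itself give that $\wt{v}^A$ and $\wt{\alpha}_A$ are \emph{polynomial} in $p_A$. The paper obtains this from the prolongation: contracting \eqref{eq-DDs} with three vertical vectors gives $\chi^{aA}\chi^{bB}\chi^{cC}\wt{D}_a\wt{D}_b\wt{v}_c=\partial^2\wt{v}^C/\partial p_A\partial p_B=0$, and differentiating once more gives $\partial^3\wt{\alpha}_D/\partial p_A\partial p_B\partial p_C=0$. Only after combining these polynomial bounds with the homogeneity from Lemma~\ref{lem-hom} do you get the explicit forms \eqref{eq-lift0}, \eqref{eq-lift+}, \eqref{eq-lift-}; in the $+2$ case one further needs $\chi^{a(A}\chi^{bB)}\wt{D}_a\wt{v}_b=0$ and $\chi^{a(A}\chi^{bB)}\check{\eta}^c_C\wt{D}_a\wt{D}_b\wt{v}_c$ to force $w^{AB}=w^{[AB]}$ and $\psi_C^{AB}=\delta_C^{(A}\nu^{B)}$. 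You should insert this step before matching with the lifts.
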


\begin{proof}
We work with a Patterson--Walker metric $g_{ab}$ and the relation  \eqref{nup} throughout. Following the argument given in the proof of Proposition \ref{prop-Einstein2lifts}, we first show that for any conformal Killing field $\wt{v}^a$,
\begin{align}\label{eq-to-show}
\mc{L}_k \left( \mc{L}_k -2 \right) \left( \mc{L}_k + 2 \right) \wt{v}^a & = 0 \, .
\end{align}
Differentiating \eqref{eq-conf-iso-prol1} once and substituting \eqref{eq-conf-iso-prol2} and \eqref{eq-conf-iso-prol3} yield
\begin{align}
\wt{D}_a \wt{D}_b \wt{v}_c & = - \mb{g}_{a[b} \wt{\beta}_{c]} - 2 \Rho_{a[b} \wt{v}_{c]} + \wt{v}^d \wt{W}_{dabc} - \mb{g}_{bc} \Rho_{ad} \wt{v}^d - \mb{g}_{bc} \wt{\beta}_a \, . \label{eq-DDs}
\end{align}
Now, using \eqref{eq-Lie-density} with $w=0$ gives
\begin{align*}
\mc{L}_k \wt{v}_a & = k^b \wt{D}_b \wt{v}_a + \wt{v}_b \mu \ind{_a^b} - \wt{v}_a \, .
\end{align*}
We note that $\mc{L}_k \mu \ind{_a^b} = 0$ and using \eqref{eq-DDs}, $k^a k^b \wt{D}_a \wt{D}_b \wt{v}_c = 0$, where we have made use of the fact that $k^a \wt{W}_{abcd} k^d= 0$, and, for a Patterson--Walker metric, $\wt{\Rho}_{ab}k^b =0$ --- see \eqref{eq-Walker-Weyl} and \eqref{eq-Walker-Rho}. Then we compute
\begin{align*}
\mc{L}_k^2 \wt{v}_a & = 2 \left( k^d \wt{D}_d \wt{v}_c \right) \mu \ind{_a^c} - 2 \, \wt{v}_c \mu \ind{_a^c} + 2 \, \wt{v}_a \, , &
\mc{L}_k^3 \wt{v}_a & = 4 \, \mc{L}_k \wt{v}_a \, ,
\end{align*}
which is equivalent to \eqref{eq-to-show}. The result follows immediately.

Next, we write $\wt{v}^a = \wt{v}^A \check{\eta}^a_A + \wt{\alpha}_A \chi^{aA}$. Then contracting \eqref{eq-DDs} with three vertical fields yields
\begin{align*}
 \chi^{aA} \chi^{bB} \chi^{cC} \wt{D}_a \wt{D}_b \wt{v}_c & = \parderv{{}^2}{p_A p_B} \wt{v}^C = 0 \, , & \mbox{i.e.}  & &  \wt{v}^A & = w^{AB} p_B + \psi^A \, ,
\end{align*}
where $w^{AB}$ and $\psi^A$ only depend on $x^A$. Similarly,
\begin{align*}
 \chi^{aA} \chi^{bB} \chi^{cC} \check{\eta}^d_D \wt{D}_a \wt{D}_b \wt{D}_c \wt{v}_d & = \parderv{{}^3}{p_A p_B p_C} \wt{\alpha}_D = 0 \, , \\
  \mbox{i.e.}  \quad  \wt{\alpha}_A & = \psi_A^{BC} p_B p_C + \varphi_A^B p_B +  \alpha_A \, ,
\end{align*}
where $\psi_A^{BC}=\psi_A^{(BC)}$, $\varphi_A^B$ and $\alpha_A$ only depend on $x^A$.

Now, applying Lemma \ref{lem-hom} gives the following conditions:
\begin{enumerate}[(a)]
\item if $\mc{L}_k \wt{v}^a = 0$, then $\wt{v}^A = \psi^A$ and $\wt{\alpha}_A = \varphi_A^B p_B = -\phi_A^B p_B + \frac{n-1}{n+1} \psi p_A $ where $\phi_C^C=0$ and $\psi = \frac{n+1}{n(n-1)} \varphi_C^C $ with factors chosen for later convenience; \label{case-a}
\item if $\mc{L}_k \wt{v}^a = 2 \, \wt{v}^a$, then $\wt{v}^A = w^{AB} p_B$ and $\wt{\alpha}_A = \psi_A^{BC} p_B p_C$;\label{case-b}
\item if $\mc{L}_k \wt{v}^a = -2 \, \wt{v}^a$, then $\wt{v}^A = 0$ and $\wt{\alpha}_A = \alpha_A (x)$;\label{case-c}
\end{enumerate}
In case (\ref{case-a}), we immediately conclude that $\wt{v}^a$ takes the form \eqref{eq-lift0}, while in (\ref{case-c}) that $\wt{v}^a$ takes the form \eqref{eq-lift-}. For case (\ref{case-b}), we return to the conformal Killing equation and equation \eqref{eq-DDs}, and find
\begin{align*}
\chi^{a(A} \chi^{bB)} \wt{D}_a \wt{v}_b & = w^{(AB)} = 0 \, , & \mbox{i.e.} & & w^{AB} & = w^{[AB]} \, , \\
\chi^{a(A} \chi^{bB)} \check{\eta}^c_C \wt{D}_a \wt{D}_b \wt{v}_c & =  \psi_C^{AB} = - 2 \, \delta_C^{(A} \wt{\beta}_a \chi^{aB)}  \, , & \mbox{i.e.} & & \psi_C^{AB} & = \delta_C^{(A} \nu^{B)} \, ,
\end{align*}
for some $\nu^A$, from which it follows that $\wt{v}^a$ takes the form \eqref{eq-lift+}. At this stage, we do not know that $v^A$, $w^{AB}$ and $\alpha_A$ satisfy \eqref{eq-proj-sym}, \eqref{eq-bivec} and \eqref{eq-proj-Killing} respectively, nor that $v^A$, $\phi_A^B$ and $\psi$ are related by \eqref{eq-proj-sym-xtra}, and $w^{AB}$ and $\nu^A$ by \eqref{eq-bivec-xtra}.

Next, we note that by Lemma \ref{lem-lift-properties}, $\wt{v}_+^a$ and  $\wt{v}_-^a$ are tangent to the distributions $U$ and $V$ annihilated of $\eta_A$ and $\chi^{A'}$ respectively. Since $k^a$ is tangent to both then $\wt{v}_\pm^a$ could potentially be of the form $f \, k^a$ for some smooth function $f$. So suppose that $\wt{v}_\pm^a = f \, k^a$. Then $\mc{L}_k \wt{v}_\pm^a = \pm 2 \, \wt{v}_\pm^a$ tells us that $f$ must be non-constant. But since $k^a$ is a conformal Killing field, $f$ must necessarily be constant. Hence, $\wt{v}_\pm^a$ cannot be proportional to $k^a$.

Finally, suppose $\wt{v}_0^a = c \, k^a$ for some constant $c$. Then $\mc{L}_k \wt{v}_0^a = 0$. But computing $\mu \ind{^a_b} \wt{D}_a \wt{v}_0^b - \frac{1}{n} \wt{D}_d \wt{v}^d_0 = - 2 \, c \, (n+1)$ leads to a contradiction. Hence, $\wt{v}_0^a$ cannot be proportional to $k^a$.

To conclude the proof, we show that $v^A$, $\phi_A^B$ and $\psi$ are related by \eqref{eq-proj-sym-xtra}, and $w^{AB}$ and $\nu^A$ by \eqref{eq-bivec-xtra}, and that $v^A$, $w^{AB}$ and $\alpha_A$ satisfy \eqref{eq-proj-sym}, \eqref{eq-bivec} and \eqref{eq-proj-Killing} respectively.
\begin{enumerate}[(a)]
\item Suppose $\mc{L}_k \wt{v}^a = 0$ and $\mu^{ab} \wt{D}_a \wt{v}_b  - \frac{1}{n} \wt{D}_c \wt{v}^c = 0$ so that  $\wt{v}^a=\wt{v}^a_0$ given by \eqref{eq-lift0}. Computing $\wt{D}_a \wt{v}_b$ gives \eqref{eq-projsym2CKf} again. Taking the trace-free part of \eqref{eq-projsym2CKf} yields $\phi_A^B = D_A v^B - \frac{1}{n} \delta_A^B D_C v^C$ and \eqref{eq-proj-sym-prol3}. Now, substituting $\phi_A^B$ into \eqref{eq-proj-sym-prol3} precisely yields \eqref{eq-proj-sym}. Finally,
\begin{align*}
\mu^{ab} \wt{D}_a \wt{v}_b  - \frac{1}{n} \wt{D}_c \wt{v}^c & = \frac{n-1}{n} \left( D_C v^C  - n \psi \right) \, .
\end{align*}
Since, by assumption the left-hand side vanishes, we have $\psi = \frac{1}{n} D_C v^C$.

\item Suppose $\mc{L}_k \wt{v}^a = 2 \, \wt{v}^a$ so that  $\wt{v}^a=\wt{v}^a_+$ given by \eqref{eq-lift+}. Computing $\wt{D}_a \wt{v}_b$ gives \eqref{eq-bivec2CKf} again. The trace-free part of \eqref{eq-bivec2CKf} vanishes, which is equivalent to 
\begin{align}
D_A w^{BC} - \frac{1}{n} D_D w^{BD} \delta_A^C - \delta_A^B \nu^C + \frac{1}{n} \nu^B \delta_A^C & = 0  \, , \label{eq-bivec2CKf1} \\
\left( D_{(A} \nu^{(C} + \Rho_{(A|E} w^{E(C} \right) \delta_{|B)}^{D)} - w^{E(C} W \ind{_{E(A}^{D)}_{B)}} & = 0 \, . \label{eq-bivec2CKf2}
\end{align}
The symmetric part of \eqref{eq-bivec2CKf1} yields $\nu^A =\frac{1}{n-1} D_C w^{CA}$ and the skew-symmetric part reduces to \eqref{eq-bivec}. In particular, $w^{AB}$ satisfies \eqref{eq-bivec}. This in turn implies \eqref{eq-bivec-prol}, which, substituted into \eqref{eq-bivec2CKf2}, yields
\begin{align*}
\left( \frac{1}{2(n-2)} w^{EF} W \ind{_{EF}^{(C}_{(A}} \right) \delta_{B)}^{D)} + w^{E(C} W \ind{_{E(A}^{D)}_{B)}} & = 0 \, .
\end{align*}
Taking the trace shows that both terms vanish separately, as required.

\item Suppose $\mc{L}_k \wt{v}^a = -2 \, \wt{v}^a$ so that $\wt{v}^a=\wt{v}^a_-$ given by \eqref{eq-lift-}. Computing $\wt{D}_a \wt{v}_b$ gives \eqref{eq-projKilling2CKf} again, from which we immediately conclude that $\alpha_A$ is Killing.

\end{enumerate}
\end{proof}

We end the section with a geometric interpretation of a light-like conformal Killing field $\wt{v}^a$ with $\mc{L}_k \wt{v}^a = 0$.
\begin{prop}\label{prop-geod-proj-sym}
Let $\wt{v}^a$ be a conformal Killing field on $\wt{M}$ such that $\mc{L}_k \wt{v}^a = 0$ with associated infinitesimal projective symmetry $v^A$ as in Proposition \ref{prop-conf-iso2lift}. Then $\wt{v}^a$ is light-like if and only if
\begin{align}\label{eq-v-light-geod}
v^B D_B v^A  & = \frac{2}{n+1} ( D_C v^C ) v^A \, ,
\end{align}
for any affine connection $D_A$ in the projective class on $M$. In particular, if $\wt{v}^a$ is light-like then $v^A$ is geodetic.
\end{prop}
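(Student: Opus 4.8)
The plan is to write $\wt v^a$ explicitly in the canonical fibre coordinates $\{x^A,p_A\}$ of a Patterson--Walker metric $g\in\mb c$, compute its $g$-norm, and observe that vanishing of this norm --- a polynomial in the $p_A$ with coefficients that are functions on $M$ --- is equivalent to the stated equation on $(M,\mb p)$.

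First I would exploit $\mc L_k\wt v^a=0$: by Proposition~\ref{prop-conf-iso2lift}, $\wt v^a$ is the lift $\wt v_0^a$ of the associated projective symmetry $v^A$ (modulo a constant multiple of the null field $k^a$). Making \eqref{eq-lift0} explicit by means of the identification $\eta_A=\tfrac1{\sqrt2}p_A$ of \eqref{nup}, the formula \eqref{eq-Special-K} for $k^a$, and the expressions $\phi_A^B=D_Av^B-\tfrac1n\de_A^BD_Cv^C$ and $\psi=\tfrac1nD_Cv^C$ of \eqref{eq-proj-sym-xtra}, one finds $\wt v^a=v^A\check\eta^a_A+\wt\alpha_A\chi^{aA}$ with
\begin{align*}
\wt\alpha_A=-(D_Av^B)\,p_B+\frac{2}{n+1}(D_Cv^C)\,p_A\,,
\end{align*}
the coefficient $\tfrac2{n+1}$ emerging as $\tfrac1n+\tfrac{n-1}{n(n+1)}$ once the trace part of $\phi_A^B$ is combined with the $\psi$-term.

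Next I would compute the norm. Since $V=\im\chi^{aA}$ and $H=\im\check\eta^a_A$ are totally isotropic and mutually dual under $g$, the relations \eqref{xinu} give $g_{ab}\wt v^a\wt v^b=2\,v^A\wt\alpha_A$, hence
\begin{align*}
g_{ab}\wt v^a\wt v^b=2\Bigl(-v^BD_Bv^A+\frac{2}{n+1}(D_Cv^C)\,v^A\Bigr)p_A\,.
\end{align*}
As the $p_A$ are independent fibre coordinates, this vanishes identically precisely when $v^BD_Bv^A=\tfrac2{n+1}(D_Cv^C)v^A$, which is \eqref{eq-v-light-geod}. A one-line computation with the projective change rule \eqref{eq-Dhat-D} confirms that $v^BD_Bv^A-\tfrac2{n+1}(D_Cv^C)v^A$ is projectively invariant, so the equation does not depend on the choice of $D\in\mb p$. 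The last assertion is then immediate: \eqref{eq-v-light-geod} exhibits $v^BD_Bv^A$ as a pointwise multiple of $v^A$, which is exactly the condition that the flow lines of $v^A$ be unparametrised geodesics of $\mb p$, i.e.\ that $v^A$ be geodetic.

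The only step demanding some care is the derivation of $\wt\alpha_A$: one must substitute \eqref{nup} and \eqref{eq-Special-K} into \eqref{eq-lift0} correctly and merge the trace and trace-free pieces of $\phi_A^B$ with the $\psi$-contribution so that the coefficient of $(D_Cv^C)p_A$ comes out to $\tfrac2{n+1}$ rather than $\tfrac1n$; the norm computation via \eqref{xinu} and the projective-invariance check are then routine.
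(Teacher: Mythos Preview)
Your proof is correct and is essentially the paper's own argument: compute the $g$-norm of the lift $\wt v_0^a$ using \eqref{nup} and the pairings \eqref{xinu}, observe it is linear in the fibre coordinates $p_A$, and read off \eqref{eq-v-light-geod} from the vanishing of the coefficient; the paper keeps $\phi_A^B,\psi$ abstract to get $\wt v^a\wt v_a=2\bigl(\tfrac{n-1}{n+1}\psi\,v^A-\phi_B^Av^B\bigr)p_A$ and only then substitutes, while you substitute first, but the content is identical. One small caution on your parenthetical ``modulo a constant multiple of the null field $k^a$'': this is not innocuous for the norm, since $g(\wt v_0,k)=\wt v_0^a k_a=2v^Ap_A\neq0$, so a $ck^a$ term would shift the right-hand side of \eqref{eq-v-light-geod} by $2cv^A$; both you and the paper tacitly take $\wt v^a=\wt v_0^a$ (the component singled out in Proposition~\ref{prop-conf-iso2lift} by $\mu\ind{^a_b}\wt D_a\wt v_0^b-\tfrac1n\wt D_c\wt v_0^c=0$), which is the intended reading of the statement.
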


\begin{proof}
We compute the norm of the lift $\wt{v}^a=\wt{v}^a_0$ as defined in Lemma \ref{lem-lifts}:
\begin{align*}
\wt{v}^a \wt{v}_a & = 2 \left( \frac{n-1}{n+1} \psi \, v^A - \phi_B^A v^B \right) p_A \, ,
\end{align*}
where we have used  \eqref{nup} as before. So, $\wt{v}^a \wt{v}_a = 0$ if and only if $\phi_B^A v^B= \frac{n-1}{n+1} \psi v^A$. Since $v^A$ is an infinitesimal projective symmetry, we know that $\phi_A^B$ and $\psi$ are given by \eqref{eq-proj-sym-xtra} so that $\wt{v}^a$ is light-like if and only if $v^A$ satisfies \eqref{eq-v-light-geod}.
This condition in particular implies that $v^A$ is geodetic with respect to $D_A$, and thus with respect to the projective structure.
\end{proof}

\subsection{Decomposition of Killing fields of Patterson--Walker metrics}
We now consider the Patterson--Walker metric $g$ induced by a given affine connection $D$ on $M$. Let $\wt{v}^a$ be an infinitesimal symmetry of $g$, i.e.\ $\mc{L}_{\wt{v}}g=0$, which is well-known to be equivalent to the overdetermined equation
\begin{align}
  \wt{D}_{(a}\wt{v}_{b)} & = 0 \, . \label{eq-Killing}
\end{align}
Such a field is also known as a Killing field. We want to understand how $\wt{v}$ decomposes in terms of objects on the affine structure $(M,D)$ in analogy to Proposition \ref{prop-conf-iso2lift} and Theorem \ref{thm-confkillingfields}.

Before we proceed, we recall the definition of an \emph{infinitesimal affine symmetry} as a vector field $v^A$ that preserves the affine structure, i.e.\ it satisfies \eqref{eq-proj-symG} with $\Upsilon_A =0$. Following \cite{Yorozu1983,eastwood-notes}, one can check that such a vector field satisfies the overdetermined second order equation 
\begin{align}\label{eq-affine-sym}
D_{A} D_{B} v^C + v^D R \ind{_{DA}^C_{B}} & = 0 \, .
\end{align}
One can show that \eqref{eq-affine-sym} is equivalent to the system
\begin{align}\label{eq-prol-aff-sym}
D_A v^B - \phi_A^B - \delta_A^B \psi  & = 0 \, , &
D_{A} \phi_{B}^C + v^D R \ind{_{DA}^C_{B}} & = 0 \, , &
D_A \psi  & = 0 \, ,
\end{align}
where we have set $\phi_A^B := D_A v^{B} - \frac{1}{n} D_C v^C \delta_A^B$ and $\psi := \frac{1}{n} D_C v^C$.

Let us define the following vector fields on $\wt{M}$:
\begin{align}
 \wt{v}_0^a & :=v^A \check{\eta}_A^a  - \sqrt{2} \, \phi_B^A \eta_A \chi^{aB} - \frac{1}{2} \psi k^a \,  , \label{eq-lift0aff}
\\
   \wt{v}_+^a & := \sqrt{2} \, w^{AB} \eta_A \check{\eta}^a_B \, , \label{eq-lift+aff}
 \\
       \wt{v}_-^a & := \alpha_A \chi^{aA} \, , \label{eq-lift-aff} 
\end{align}
where $v^A$, $\phi_A^B$, $\psi$, $w^{AB}$ and $\alpha_A$ are tensor fields on $M$, with $w^{AB}=w^{[AB]}$ and $\phi_C^C=0$. One then easily checks that an infinitesimal affine symmetry $v^A$, a parallel bivector $w^{AB}$ and a Killing $1$-form $\alpha_A$ give rise to Killing fields via the lifts \eqref{eq-lift0aff}, \eqref{eq-lift+aff} and \eqref{eq-lift-aff} respectively.

\begin{rema}
Had we lifted an infinitesimal affine symmetry $v^A$ by means of \eqref{eq-lift0}, we would have discovered that $\wt{v}^a_0$ is a homothety with $\wt{D}_a \wt{v}^a_0 = \frac{2n^2}{n+1} \psi$. Since $k^a$ is a homothety, we can modify \eqref{eq-lift0} by adding the term $-\frac{n}{n+1} k^a$ to it and thus obtain the Killing field \eqref{eq-lift0aff}.
\end{rema}

\begin{prop}\label{prop-iso2lift}
A Killing field $\wt{v}^a \in \wt{\mc{E}}^a$  can be uniquely decomposed as
\begin{align}\label{eq-iso2lift}
\wt{v}^a & = \wt{v}^a_0 + \wt{v}^a_- + \wt{v}^a_+ \, ,
\end{align}
where $\mc{L}_k \wt{v}_\pm^a = \pm 2 \, \wt{v}^a_\pm$, $\mc{L}_k \wt{v}^a_0 = 0$.
Further, $\wt{v}_0^a$, $\wt{v}_+^a$ and $\wt{v}_-^a$ can be expressed as
the lifts \eqref{eq-lift0aff}, \eqref{eq-lift+aff} and \eqref{eq-lift-aff}
respectively, where
\begin{enumerate}[(a)]
\item $v^A = \frac{1}{2} \chi^{aA} \wt{D}_a \left( k_b  \wt{v}_{0}^b \right)$ is an infinitesimal affine symmetry, i.e.\ satisfies \eqref{eq-affine-sym}.
\item $w^{AB} = \frac{1}{2} \chi^{aA} \chi^{B}_b \wt{D}_{a} \wt{v}_{+}^b$ is parallel, i.e.\ $D_C w^{AB} =0$, and satisfies the integrability condition $w^{B(A} R\ind{_{B(C}^{D)}_{E)}}=0$.
\item $\alpha_A = \check{\eta}_{aA}  \wt{v}_{-}^a$ satisfies the Killing equation \eqref{eq-proj-Killing}.
\end{enumerate}
\end{prop}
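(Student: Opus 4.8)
The plan is to bootstrap off Proposition \ref{prop-conf-iso2lift}. A Killing field is in particular a conformal Killing field satisfying \eqref{eq-conf-iso-prol1} with $\wt\psi \equiv 0$, so $\wt v^a$ already decomposes as $\wt v^a = \hat v^a_+ + \hat v^a_0 + \hat v^a_- + c\,k^a$, where $\hat v^a_\pm$ and $\hat v^a_0$ are the projective lifts \eqref{eq-lift+}, \eqref{eq-lift0}, \eqref{eq-lift-} of a bivector $w^{AB}$ satisfying \eqref{eq-bivec} and \eqref{eq-int-cond-bivec}, an infinitesimal projective symmetry $v^A$ satisfying \eqref{eq-proj-sym}, and a Killing $1$-form $\alpha_A$, with $\psi = \tfrac1n D_C v^C$ and $\nu^A = \tfrac1{n-1}D_C w^{CA}$, and with $\mc L_k$-eigenvalues $+2$, $0$, $-2$, $0$ respectively. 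The extra information available here is that $\wt D_{(a}\wt v_{b)}$ vanishes identically, not merely up to a multiple of the metric.

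First I would impose $\wt D_{(a}\wt v_{b)} = 0$ directly. Summing the formulas \eqref{eq-bivec2CKf}, \eqref{eq-projsym2CKf}, \eqref{eq-projKilling2CKf} (which apply since $v^A$, $w^{AB}$, $\alpha_A$ satisfy their prolonged systems) together with $\wt D_{(a}k_{b)} = g_{ab}$ from \eqref{eq-CKf2homothety} and using $\psi = \tfrac1n D_C v^C$, one finds $\wt D_{(a}\wt v_{b)} = \bigl(-\nu^C p_C + \tfrac{1}{n+1}D_C v^C + c\bigr)g_{ab}$. Setting this to zero and separating by homogeneity degree in the fibre coordinates $p_A$ forces $\nu^A = 0$ and $D_C v^C = -(n+1)c$, a constant. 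Then $\nu^A = \tfrac1{n-1}D_C w^{CA} = 0$ combined with \eqref{eq-bivec} gives $D_C w^{AB} = 0$, so $w^{AB}$ is parallel and the lift \eqref{eq-lift+} collapses to \eqref{eq-lift+aff}; while $D_C v^C$ constant gives $D_A\psi = 0$, and the constant $c$ is then exactly the one for which $\hat v^a_0 + c\,k^a$ coincides with the affine lift \eqref{eq-lift0aff} (the formulas \eqref{eq-lift0} and \eqref{eq-lift0aff} differing precisely by $-\tfrac{n}{n+1}\psi\,k^a$); and $\hat v^a_- = \wt v^a_-$ is unchanged. This produces the decomposition \eqref{eq-iso2lift}; its uniqueness is inherited from Proposition \ref{prop-conf-iso2lift} (equivalently, from the canonical $\mc L_k$-eigenspace splitting), and the recovery formulas for $v^A$, $w^{AB}$, $\alpha_A$ are the same expressions as there, valid for the same reasons since $k^a k_a = 0$ and the affine lifts agree with the projective ones.

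It remains to upgrade the underlying data from projective to affine. For $v^A$: inserting $D_A\psi = 0$ into the prolonged system \eqref{eq-proj-sym-prol3} gives $\beta_A = -\Rho_{AB}v^B$, and substituting this back while rewriting the projective Weyl tensor via \eqref{formulaCprojective} makes every occurrence of the projective Schouten tensor cancel, leaving $D_{(A}D_{B)}v^C + v^D R\ind{_{D(A}^C_{B)}} = 0$; since the skew part over $(A,B)$ of the affine-symmetry equation \eqref{eq-affine-sym} is a consequence of the first Bianchi identity, $v^A$ satisfies \eqref{eq-affine-sym}. For $w^{AB}$: the part of \eqref{eq-bivec2CKf} quadratic in $p_A$ must now vanish on its own, and after setting $\nu^A = 0$ and rewriting $W$ via \eqref{formulaCprojective}, the skew-symmetry of $w^{AB}$ annihilates one Schouten term and the rest cancels, leaving exactly $w^{B(A}R\ind{_{B(C}^{D)}_{E)}} = 0$. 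For $\alpha_A$: \eqref{eq-projKilling2CKf} shows directly that $D_{(A}\alpha_{B)} = 0$.

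The step I expect to be the main obstacle is this last one: tracking the trace corrections carefully enough to see that, once $w^{AB}$ is parallel and $v^A$ has constant divergence, the projective Schouten tensor drops out of every relevant symmetrized contraction and the projective curvature conditions \eqref{eq-int-cond-bivec} and \eqref{eq-proj-sym} turn into their Riemann-tensor counterparts. This is where the precise index symmetries of $W$ versus $R$, the skew-symmetry of $w^{AB}$, and the first Bianchi identity do the real work, and where an error in the trace bookkeeping is easiest to make.
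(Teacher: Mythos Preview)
Your approach is correct and follows the paper's own strategy: both proofs bootstrap from Proposition~\ref{prop-conf-iso2lift} by regarding a Killing field as a conformal Killing field with $\wt\psi=0$, decompose into $\mc{L}_k$-eigenspaces, and then use the vanishing of $\wt{D}_{(a}\wt v_{b)}$ (rather than merely its trace-free part) to upgrade the projective data to affine data. The only organisational difference is that the paper recomputes $\wt{D}_{(a}\wt v_{b)}$ directly for the affine lift \eqref{eq-lift0aff}, obtaining \eqref{eq-affsym2Kf} and reading off \eqref{eq-prol-aff-sym} immediately, whereas you carry along the projective lifts \eqref{eq-lift0}, \eqref{eq-lift+} and the constant $c$, extract $\nu^A=0$ and $D_Cv^C=-(n+1)c$ from the vanishing trace, and then absorb $c\,k^a$ into $\hat v_0^a$ to land on \eqref{eq-lift0aff}. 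Your reduction of the prolonged projective system to the affine one via $D_A\psi=0$, $\beta_A=-\Rho_{AB}v^B$, and \eqref{formulaCprojective}, together with the Bianchi argument for the skew part of \eqref{eq-affine-sym}, is a clean alternative to the paper's direct recomputation; likewise your derivation of $w^{B(A}R\ind{_{B(C}^{D)}_{E)}}=0$ from the quadratic-in-$p$ part of \eqref{eq-bivec2CKf} with $\nu^A=0$ is exactly what the paper's terse ``\eqref{eq-bivec2CKf} with $\nu^A=0$'' is pointing at. Your anticipated ``main obstacle'' is not really an obstacle: once $\nu^A=0$ and $D_A\psi=0$ are in hand, the Schouten cancellations are mechanical, as you found.
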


\begin{proof}
Since every Killing field of $g$ is in particular a conformal Killing field with respect to the conformal Patterson--Walker metric $[g]=\mb{c}$, we can recycle the proof of Proposition \ref{prop-conf-iso2lift}. In particular, we obtain the decomposition \eqref{eq-iso2lift}.  Note that unlike in decomposition \eqref{eq-conf-iso2lift}, the homothety $k^a$ does not occur in \eqref{eq-iso2lift} since $k^a$ is not a Killing field. Next, following the same reasoning, we deduce that $\wt{v}^a_0$, $\wt{v}^a_+$ and $\wt{v}^a_-$ take the forms \eqref{eq-lift0aff}, \eqref{eq-lift+aff} and \eqref{eq-lift-aff}. The only difference here is the choice of factors in \eqref{eq-lift0aff}. Finally, we compute $\wt{D}_{(a} \wt{v}_{b)}=0$. When $\wt{v}^a = \wt{v}^a_0$, we find
\begin{multline}\label{eq-affsym2Kf}
\wt{D}_{(a} \wt{v}_{b)} = 
\left( D_A v^{B} - \frac{1}{n} D_C v^C \delta_A^B  - \phi_A^B \right) \chi_{(a}^A \check{\eta}_{b)B} 
+ \frac{1}{2} \left( \frac{1}{n} D_C v^C - \psi \right) g_{ab} \\ 
- \Bigl( D_A \phi_B^C + v^D R \ind{_{DA}^C_B} 
 +  \delta_B^C D_A \psi \Bigr) p_C \chi_{(a}^A \chi_{b)}^B  \, ,
\end{multline}
which tells us that $v^A$ is an infinitesimal affine symmetry, as can be checked directly from the defining equations \eqref{eq-Killing} and \eqref{eq-affine-sym}. When $\wt{v}^a = \wt{v}^a_+$, \eqref{eq-bivec2CKf} with $\nu^A=0$ implies that $w^{AB}$ is parallel. When $\wt{v}^a = \wt{v}^a_-$, \eqref{eq-projKilling2CKf} gives us that $\alpha_A$ is Killing.
\end{proof}

 Taken together, we thus obtain Theorem \ref{thm-killingfields}.

\begin{rema} The fact that $k^a$ does not occur in \eqref{eq-iso2lift} allows us to dispense with the additional requirement $\mu \ind{^a_b} \wt{D}_a \wt{v}_0^b - \frac{1}{n} \wt{D}_c \wt{v}^c_0 = 0$ given in Proposition \ref{prop-conf-iso2lift}. In fact, if $\wt{v}_0^a$ is given by \eqref{eq-lift0aff}, then $\mu \ind{^a_b} \wt{D}_a \wt{v}_0^b - \frac{1}{n} \wt{D}_c \wt{v}^c_0 = 2n$.
\end{rema}

\begin{rema}
For a vector field $v^A\in \mc{E}^A$, one may consider its Hamiltonian lift to $T^*M$, which is just the vector field corresponding to the 1-form $\d \left(v^Ap_A \right)$ via the symplectic structure $\mu$, see \eqref{eq-symp}, i.e.,
\begin{align*}
v^A \parderv{}{x^A} - p_B \parderv{v^B}{x^A} \parderv{}{p_A} \, .
\end{align*}
The authors of  \cite{dunajski-mettler-projective} showed that if $v^A$ is an infinitesimal affine symmetry of $(M,\nabla)$ then its Hamiltonian lift is a Killing field of $(\wt{M},g)$.
As expected from Theorem \ref{thm-killingfields}, this lift corresponds to the lift $\wt{v}_0^a$ given by \eqref{eq-lift0aff}. This is confirmed by re-expressing $\wt{v}_0^a$ in coordinates using \eqref{eq-for-Josef}, \eqref{eq-Special-K},\eqref{nup} and \eqref{eq-prol-aff-sym}.
\end{rema}

Finally, we give the analogue of Proposition \ref{prop-geod-proj-sym}.
\begin{prop}
Let $\wt{v}^a$ be a Killing field on $\wt{M}$ such that $\mc{L}_k \wt{v}^a = 0$ with associated infinitesimal affine symmetry $v^A$ as in Proposition \ref{prop-iso2lift}. Then $\wt{v}^a$ is light-like if and only if $v^B D_B v^A = 0$, i.e.\ $v^A$ is tangent to affinely parametrised geodesics on $M$.
\end{prop}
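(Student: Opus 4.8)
The plan is to establish the affine analogue of Proposition~\ref{prop-geod-proj-sym} by computing the squared norm of the relevant lift. Since $\mc{L}_k\wt{v}^a = 0$, the decomposition in Proposition~\ref{prop-iso2lift} forces $\wt{v}^a = \wt{v}^a_0$ with $\wt{v}^a_0$ of the form \eqref{eq-lift0aff}, and with $v^A$ the associated infinitesimal affine symmetry. I would work throughout with a fixed Patterson--Walker metric $g$ in $\mb{c}$ and the identification $\eta_A = \frac{1}{\sqrt{2}}p_A$ from \eqref{nup}.

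First I would put \eqref{eq-lift0aff} into the split form $\wt{v}^a_0 = v^A\check{\eta}^a_A + \wt{\alpha}_B\chi^{aB}$. Using $k^a = 2\sqrt{2}\,\eta_A\chi^{aA}$ from Proposition~\ref{prop-other-dist} to absorb the $k^a$-term yields $\wt{\alpha}_B = -\sqrt{2}\,(\phi^A_B + \psi\,\delta^A_B)\,\eta_A$. The decisive point --- and the reason the answer is cleaner than in the projective case --- is that by the definitions of $\phi^A_B$ and $\psi$ in \eqref{eq-prol-aff-sym} one has $\phi^A_B + \psi\,\delta^A_B = D_B v^A$, so that $\wt{\alpha}_B = -\sqrt{2}\,(D_B v^A)\,\eta_A$.

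Next, using that $V = \ker\chi^A_a$ and $H = \ker\check{\eta}_{aA}$ are totally isotropic and dual via $\chi^A_a\check{\eta}^a_B = \delta^A_B$ (see \eqref{xinu}), the only surviving contractions give $\wt{v}^a_0\wt{v}_{0a} = 2\,v^A\wt{\alpha}_A$; substituting the expression for $\wt{\alpha}_A$ together with \eqref{nup} one obtains
\[
\wt{v}^a_0\wt{v}_{0a} = -2\,(v^B D_B v^A)\,p_A \, .
\]
Since the $p_A$ are independent fibre coordinates, this function on $\wt{M}$ vanishes identically precisely when $v^B D_B v^A = 0$, i.e.\ when the integral curves of $v^A$ are affinely parametrised geodesics. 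I do not expect any genuine obstacle here: the whole argument is bookkeeping with the spinor projectors, and the one thing to get right is the cancellation $\phi^A_B + \psi\,\delta^A_B = D_B v^A$, which is specific to the affine normalisation $-\frac{1}{2}\psi\,k^a$ of the lift \eqref{eq-lift0aff} (for the projective lift \eqref{eq-lift0} the corresponding combination is instead $\phi^A_B - \frac{n-1}{n+1}\psi\,\delta^A_B$, which is why one there obtains the condition \eqref{eq-v-light-geod}).
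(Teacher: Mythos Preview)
Your proposal is correct and follows essentially the same approach as the paper. The paper's proof is a one-liner that computes $\wt{v}^a\wt{v}_a = -2(\psi\,v^A + \phi^A_B v^B)p_A$ directly from \eqref{eq-lift0aff} and then invokes $\phi^A_B + \psi\,\delta^A_B = D_B v^A$ from \eqref{eq-prol-aff-sym}; your version makes the intermediate step $\wt{\alpha}_B = -\sqrt{2}(D_Bv^A)\eta_A$ explicit and arrives at the same formula $\wt{v}^a_0\wt{v}_{0a} = -2(v^BD_Bv^A)p_A$.
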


\begin{proof}
The proof is completely analogous to that of Proposition \ref{prop-geod-proj-sym}: for a Killing field $\wt{v}^a$ given by \eqref{eq-lift0aff}, we find $\wt{v}^a \wt{v}_a = - 2 \left( \psi \, v^A + \phi_B^A v^B \right) p_A$. The result follows from the definitions of $\phi_A^B$ and $\psi$, see \eqref{eq-prol-aff-sym}.
\end{proof}

\section{Special cases and further remarks}
\subsection{Case $n=2$}
In the special case $n=2$, the projective volume form $\bm{\upvarepsilon}_{AB} \in \mc{E}_{[AB]}(3)$ on $(M, \mb{p})$, with inverse $\bm{\upvarepsilon}^{AB} \in \mc{E}^{[AB]}(-3)$, allows us to identify $\mc{E}^A(-1)$ with $\mc{E}_A(2)$, and $\mc{E}^{[AB]}(-2)$ with $\mc{E}(1)$. In particular, it is straightforward to check that $\xi^A \in \mc{E}^A(-1)$ is a solution of the Euler-type equation \eqref{eq-s+2s} if and only if $\alpha_A :=  \xi^{B} \bm{\upvarepsilon}_{BA} \in \mc{E}_A (2)$ satisfies the Killing equation \eqref{eq-proj-Killing}. Similarly, $w^{AB} \in \mc{E}^{[AB]}(-2)$ is a solution of \eqref{eq-bivec} if and only if $\sigma :=  \frac{1}{2} w^{AB} \bm{\upvarepsilon}_{AB} \in \mc{E} (1)$ is a Ricci-flat scale, i.e.\ if it satisfies \eqref{eq-ars}.

This is also reflected at the level of $(\wt{M}, \mb{c})$: any conformal Killing vector field $\tilde{v}^a_\pm$ with $\mc{L}_k \tilde{v}^a_\pm = \pm 2 \, \tilde{v}^a_\pm$ gives rise to an almost Einstein scale $\tilde{\sigma}_\mp$ with $\mc{L}_k \tilde{\sigma}_\mp = \mp \tilde{\sigma}_\mp$. Conversely, any such Einstein scale arises in this way.

\begin{rema}
  Let us assume that $M$ is a two-dimensional surface equipped with Riemannian metric $g_{AB}$ and Levi-Civita covariant derivative $D_A$, and endowed with a Killing field $\al^A$. Then $D_{A} \al_{B}= \la \, \bm{\upvarepsilon}_{AB}$ for some $\la\in\cinf(M)$. Then $\xi^A :=(\ast \al)^A = \al_B \bm{\upvarepsilon}^{AB}$ satisfies $D_A \xi^B =  \la \, \de_A^B$ and therefore constitutes a (non-trivial) Euler-type field on the projective surface $M$ with projective class $\mb{p}$ spanned by $D$. Clearly, $\xi^A$ and $\alpha^A$ are orthogonal to each other. This remark applies in particular to any  surface of revolution in $\rr^3$  in which case $\alpha^A$ represents the infinitesimal generator of the rotation.
\end{rema}

\subsection{Case $n=3$}
In the special case $n=3$, the projective volume form $\bm{\upvarepsilon}_{ABC} \in \mc{E}_{[ABC]}(4)$ on $(M, \mb{p})$, with inverse $\bm{\upvarepsilon}^{ABC} \in \mc{E}^{[ABC]}(-4)$, allows us to identify $\mc{E}^{AB}(-2)$ with $\mc{E}_A(2)$. One can then easily check that $w^{AB} \in \mc{E}^{[AB]}(-2)$ is a solution of \eqref{eq-bivec} satisfying the integrability condition \eqref{eq-int-cond-bivec}
 if and only if $\alpha_A :=  \frac{1}{2} w^{BC} \bm{\upvarepsilon}_{BCA} \in \mc{E}_A (2)$ satisfies the Killing equation \eqref{eq-proj-Killing},  together with the integrability condition  $\alpha_F \bm{\upvarepsilon}^{FB(A} W \ind{_{B(C}^{D)}_{E)}}=0$.

Correspondingly, any conformal Killing vector $\wt{v}^a_+$ with $\mc{L}_k \wt{v}^a_+ = 2 \,\wt{v}^a_+$ gives rise to a conformal Killing vector $\wt{v}^a_-$ with $\mc{L}_k \wt{v}^a_- =-2 \, \wt{v}^a_-$. The explicit form of this relation is as follows. 
Assume $\wt{v}^a_+$ is a conformal Killing field. Since $D_A \bm{\upvarepsilon}_{BCD}=0$, for any affine connection $D \in \mb{p}$, then it is clear that the pullback $\wt{\varepsilon}_{abc} := \chi_a^A \chi_b^B \chi_c^C \bm{\upvarepsilon}_{ABC}$ satisfies $\wt{D}_a \wt{\varepsilon}_{bcd} =0$ with respect to any Patterson--Walker metric. A short computation then shows that $\tilde{v}_-^a := \frac{1}{2} \wt{\varepsilon}^a{}_{bc} \wt{D}^b \tilde{v}_+^c$ is indeed a conformal Killing field.

\subsection{Contact projective structures in odd dimensions}
There is a specific class of (odd-dimensional) projective structures on $M$ allowing a compatible contact structure. 
According to \cite{FoxIndiana}, these are the projective structures subordinate to the so-called contact projective structures.  
It follows that under a curvature condition imposed on the contact projective structure
(known as the vanishing of the contact torsion) 
one obtains a projective structure $\mb{p}$  on $M$ admitting a Killing 1-form $\al_A$.  
In particular, every projective structure $(M,\mb{p})$ determined by a contact projective structure with vanishing contact torsion gives rise to an infinitesimal conformal symmetry of $(\wt{M},\mb{c})$.

\subsection{Relation to Cartan geometry and tractor calculus}\label{relation}
The original oriented projective structure $(M,\mb{p})$ can be equivalently described as a Cartan geometry of type $(\SL(n+1),P)$ with $P$ a parabolic subgroup of $\SL(n+1)$, and the conformal spin structure $(\wt{M},\mb{c})$ can be equivalently described as a Cartan geometry of type $(\Spin(n+1,n+1),\wt{P})$, with $\wt{P}$ a parabolic subgroup, see \cite{cap-slovak-book}. This viewpoint was used in \cite{hsstz-fefferman} to relate the respective geometries (see also \cite{nurowski-sparling,nurowski-proj} for similar Cartan geometric approaches). The formulation in \cite{hsstz-fefferman} follows the so-called Fefferman-type construction, which is based on a group inclusion $\SL(n+1) \embed \Spin(n+1,n+1)$ of the underlying (Cartan) structure groups. Note that the conformal structure constructed in this way lives on the total space of the weighted cotangent bundle with the zero section removed $T^*M(2) \setminus \{ 0 \}$ rather than on $T^*M(2)$ as in the present article.

The decomposition of conformal Killing fields of $(\wt{M},\mb{c})$ can also be understood in this framework: Conformal Killing fields of $(\wt{M},\mb{c})$ are equivalent to infinitesimal symmetries of the equivalent \emph{Cartan geometry} $(\wt{\G},\wt{\om})$ and according to \cite{cap-infinitaut} those infinitesimal symmetries can be described equivalently by sections of the \emph{conformal adjoint tractor bundle} $\mc{A}\wt{M}$ associated to the adjoint representation of $\Spin(n+1,n+1)$ on $\so(n+1,n+1)$, parallel with respect to a certain connection referred to as the \emph{prolongation connection}. Likewise, projective infinitesimal symmetries are described as suitable parallel sections of the \emph{projective adjoint tractor bundle} $\mc{A}M$. Since $(\wt{M},\mb{c})$ is (locally) induced in a natural way from the projective structure $(M,\mb{p})$, the adjoint tractor bundle decomposes naturally according to the
decomposition of $\so(n+1,n+1)$ into its $\SL(n+1)$-irreducible components
\begin{align*}
  \rr \oplus \sl(n+1) \oplus \La^2\rr^{n+1}\oplus \La^2(\rr^{n+1})^*.
\end{align*}
Decomposing an infinitesimal symmetry into its constituents with respect to this decomposition and reinterpreting the resulting sections on the original projective structure $(M,\mb{p})$ gives an alternative (algebraic) approach to derive Theorem \ref{thm-confkillingfields}.

Let us illustrate this formalism within the general approach of the present article. A choice of metric $g$ in $\mb{c}$ splits the adjoint tractor bundle as $\mc{A}\wt{M} \cong \wt{\mc{E}}_a [2] \oplus \wt{\mc{E}}_{ab} [2] \oplus \wt{\mc{E}} \oplus \wt{\mc{E}}_a$. Similarly, a choice of torsion-free affine connection $D$ in $\mb{p}$ splits the projective adjoint tractor bundle, which is associated to $\sl(n+1)$ as $\mc{A} M \cong \mc{E}^A \oplus \left( \mc{E}^A_B \oplus \mc{E} \right) \oplus \mc{E}_B$. 
A conformal Killing field $\wt{v}^a$ can then be expressed as a section $\wt{\Sigma} = (\wt{v}_a, \wt{\phi}_{ab}, \wt{\psi} , \wt{\beta}_a)$, where $\wt{\phi}_{ab}$, $\wt{\psi}$ and $\wt{\beta}_a$ were defined at the beginning of section \ref{sec-infsym}. The defining equation \eqref{eq-conf-iso-prol1} together with its prolongation \eqref{eq-conf-iso-prol3}, \eqref{eq-conf-iso-prol2} and \eqref{eq-conf-iso-prol4} then can be understood equivalently as $\wt{\Sigma}$ being parallel with respect to the prolongation connection on $\mc{A}\wt{M}$. Similarly,
an infinitesimal projective symmetry $v^A$ can be expressed as a section $\Sigma = (v^A, \phi_B^A, \psi, \beta_A)$, where $ \phi_B^A$, $\psi$ and $\beta_A$ were defined at the beginning of section \ref{sec-pro-inv-eq}. The defining equation \eqref{eq-proj-sym} together with its prolongation \eqref{eq-proj-sym-prol3} can be interpreted as $\Sigma$ being parallel with respect to the prolongation connection on $\mc{A} M$. The relation between $\wt{\Sigma}$ and $\Sigma$ is given in terms of the lift $\tilde{v}^a_0$ of Lemma \ref{lem-lifts}.
An analogous approach can be employed to describe almost Einstein scales on $(\wt{M},\mb{c})$ in terms of parallel sections of the \emph{standard tractor bundle} and relate them to projective data.

This Cartan geometric approach can be employed to relate a wider class of invariant overdetermined equations on the respective projective and conformal structures.

\def\polhk#1{\setbox0=\hbox{#1}{\ooalign{\hidewidth
  \lower1.5ex\hbox{`}\hidewidth\crcr\unhbox0}}}

\end{document}